\newtheorem{thm}{Theorem}[section]
\newtheorem{lem}[thm]{Lemma}
\newtheorem{defi}[thm]{Definition}
\newtheorem{prop}[thm]{Proposition}
\newtheorem{rem}[thm]{Remark}
\newtheorem{con}[thm]{Conjecture}
\numberwithin{equation}{section}
\title[Affine vertex operator superalgebra $L_{\widehat{sl(2|1)}}(\mathcal{k},0)$]
{Affine vertex operator superalgebra $L_{\widehat{sl(2|1)}}(\mathcal{k},0)$ at boundary admissible level}
\author{Huaimin Li}
\address{School of Mathematical Sciences, Xiamen University,
 Xiamen, China 361005} \email{huaiminli@stu.xmu.edu.cn}
\author{Qing Wang}
\address{School of Mathematical Sciences, Xiamen University,
 Xiamen, China 361005} \email{qingwang@xmu.edu.cn}
\subjclass[2020]{17B10, 17B60, 17B69}
\keywords{Vertex operator superalgebra, category $\mathcal{O}$, admissible level}
\begin{document}

\begin{abstract}
Let $L_{\widehat{sl(2|1)}}(\mathcal{k},0)$ be the simple affine vertex operator superalgebra
associated to the affine Lie superalgebra $\widehat{sl(2|1)}$ with level $\mathcal{k}$.
We conjecture that
$L_{\widehat{sl(2|1)}}(\mathcal{k},0)$ is rational in the category $\mathcal{O}$ at boundary admissible level $\mathcal{k}$ and
there are finitely many irreducible weak $L_{\widehat{sl(2|1)}}(\mathcal{k},0)$-modules in the category $\mathcal{O}$,
where the irreducible modules are exactly the admissible modules of level $\mathcal{k}$ for $\widehat{sl(2|1)}$.
In this paper, we first prove this conjecture at boundary admissible level $-\frac{1}{2}$. Then we give an example to show that outside of the boudary levels, $L_{\widehat{sl(2|1)}}(\mathcal{k},0)$ is not rational in the category $\mathcal{O}$.
Furthermore, we consider the $\mathbb{Q}$-graded vertex operator superalgebras $(L_{\widehat{sl(2|1)}}(\mathcal{k},0),\omega_\xi)$
associated to a family of new Virasoro elements $\omega_\xi$, where $0<\xi<1$ is a rational number.
We determine the Zhu's algebra $A_{\omega_\xi}(L_{\widehat{sl(2|1)}}(-\frac{1}{2},0))$ of $(L_{\widehat{sl(2|1)}}(-\frac{1}{2},0),\omega_\xi)$
and prove that $(L_{\widehat{sl(2|1)}}(-\frac{1}{2},0),\omega_\xi)$ is rational and $C_2$-cofinite.
Finally, we consider the case of non-boundary admissible level $\frac{1}{2}$ to support our conjecture, that is,
we show that there are infinitely many irreducible weak $L_{\widehat{sl(2|1)}}(\frac{1}{2},0)$-modules in the category $\mathcal{O}$
and $(L_{\widehat{sl(2|1)}}(\frac{1}{2},0),\omega_\xi)$ is not rational.
\end{abstract}
\maketitle

\section{Introduction}

In \cite{KW}, Kac and Wakimoto introduced the notion of admissible weights to study the modular invariant representations for affine Lie (super)algebras,
and furthermore they showed that the modular invariance property of normalized (super)characters of admissible modules holds for
affine Lie (super)algebras associated to any simple finite-dimensional Lie algebra or $osp(1|2n)$.
While for other basic Lie superalgebras,
modular invariance occurs in boundary level admissible modules \cite{GK15,KW17}.
In this paper, we study the admissible modules in context of vertex operator superalgebra.
Specially, we consider the Lie superalgebra $sl(2|1)$,
which together with $osp(1|2)$ can be seen as the building block of basic Lie superalgebras.
We conjecture that
$L_{\widehat{sl(2|1)}}(\mathcal{k},0)$ is rational in the category $\mathcal{O}$ at boundary admissible level $\mathcal{k}$
and the irreducible weak modules in the category $\mathcal{O}$ are exactly the admissible modules of level $\mathcal{k}$ for $\widehat{sl(2|1)}$.
In \cite{LW}, we considered a category $\mathcal{C}$ of $\widehat{osp(1|2)}$-modules which contains the category $\mathcal{O}$,
that is, the $\widehat{osp(1|2)}$-modules in the category $\mathcal{C}$ are those the positive part of $\widehat{osp(1|2)}$ acts locally nilpotently,
and we proved that $L_{\widehat{osp(1|2)}}(\mathcal{k},0)$ is rational in the category $\mathcal{C}$ at admissible level $\mathcal{k}$,
then the rationality of $L_{\widehat{osp(1|2)}}(\mathcal{k},0)$ at admissible level $\mathcal{k}$ in the category $\mathcal{O}$  follows immediately.
In \cite{W},
Wood also proved that $L_{\widehat{osp(1|2)}}(\mathcal{k},0)$ is rational in the category $\mathcal{O}$ at admissible level $\mathcal{k}$
and classified the irreducible relaxed highest weight modules for $L_{\widehat{osp(1|2)}}(\mathcal{k},0)$.
Then in \cite{GS},
Gorelik and Serganova
showed the rationality of $L_{\widehat{osp(1|2n)}}(\mathcal{k},0)$ at admissible level $\mathcal{k}$ in the category $\mathcal{O}$.
For the simple Lie algebra,
Adamovi\'{c} and Milas proved that the simple vertex operator algebra
$L_{\widehat{sl_2}}(\mathcal{k},0)$ is rational in the category $\mathcal{O}$ at admissible level $\mathcal{k}$ in \cite{AM},
Arakawa proved the rationality of $L_{\widehat{\mathfrak{g}}}(\mathcal{k},0)$ at admissible level $\mathcal{k}$ in the category $\mathcal{O}$
for any simple finite-dimensional Lie algebra $\mathfrak{g}$ in \cite{A}.
In this paper,
we prove our conjecture for $L_{\widehat{sl(2|1)}}(\mathcal{k},0)$ at the boundary admissible level $-\frac{1}{2}$.
In fact, the superalgebra $L_{\widehat{sl(2|1)}}(-\frac{1}{2},0)$ is an infinite-order simple current extension of $V_{\widehat{gl(1|1)}}(1,0)$ \cite{CR},
and a tensor category of $L_{\widehat{sl(2|1)}}(-\frac{1}{2},0)$-modules was obtained by applying the tensor supercategory structure
on the Kazhdan-Lusztig category of affine $gl(1|1)$ in \cite{CMY}.
The semisimplicity of the Kazhdan-Lusztig category has been proved for $L_{\widehat{sl(2|1)}}(\mathcal{k},0)$
at boundary admissible level $\mathcal{k}$ in \cite{AMP}.
Recently, Gorelik and Kac proved that for non-twisted affine Kac-Moody superalgebra $\mathfrak{g}$ and boundary admissible level $\mathcal{k}$,
the number of isomorphism classes of irreducible $L_{\mathfrak{g}}(\mathcal{k},0)$-modules in the category $\mathcal{O}$ is finite and
any $L_{\mathfrak{g}}(\mathcal{k},0)$-module in the category $\mathcal{O}$ is completely reducible in \cite{GK25}.

We know that simple affine vertex operator (super)algebras $L_{\widehat{\mathfrak{g}}}(\mathcal{k},0)$
at admissible level $\mathcal{k}$ may not be rational if $\mathcal{k}$ is not a positive integer.
In \cite{DLM2}, Dong, Li and Mason showed that $L_{\widehat{sl_2}}(\mathcal{k},0)$ at admissible level $\mathcal{k}$ is a
rational $\mathbb{Q}$-graded vertex operator algebra
under a new Virasoro element and irreducible ordinary modules are exactly the admissible modules of level $\mathcal{k}$ for $\widehat{sl_2}$.
Then these results have been proved by Lin for
$L_{\widehat{\mathfrak{g}}}(\mathcal{k},0)$ associated to any simple finite-dimensional Lie algebra $\mathfrak{g}$ at admissible level $\mathcal{k}$ in \cite{Lin}.
We showed that these results also hold for $L_{\widehat{osp(1|2)}}(\mathcal{k},0)$ at admissible level $\mathcal{k}$ in \cite{LW}.
In this paper, we show that
$L_{\widehat{sl(2|1)}}(\mathcal{k},0)$ at boundary admissible level $-\frac{1}{2}$
is a rational $\mathbb{Q}$-graded vertex operator superalgebra under a new Virasoro element.
However, under such kind of new Virasoro elements, $L_{\widehat{sl(2|1)}}(\mathcal{k},0)$ at non-boundary admissible level $\frac{1}{2}$
is not rational.

Let $\mathfrak{g}={sl}(2|1)$
and $\mathcal{O}_{\mathcal{k}}$ be the full subcategory of
weak $L_{\widehat{\mathfrak{g}}}(\mathcal{k},0)$-module category such that $M$ is an object of $\mathcal{O}_{\mathcal{k}}$
if and only if $M\in\mathcal{O}$ as $\widehat{\mathfrak{g}}$-module,
we obtain all irreducible weak $L_{\widehat{\mathfrak{g}}}(-\frac{1}{2},0)$-modules in the category $\mathcal{O}_{-\frac{1}{2}}$
and all irreducible ordinary $L_{\widehat{\mathfrak{g}}}(-\frac{1}{2},0)$-modules.
We prove that there are only finitely many irreducible modules in the category $\mathcal{O}_{-\frac{1}{2}}$ up to isomorphism
and the irreducible modules are exactly the admissible modules of level $-\frac{1}{2}$ for $\widehat{\mathfrak{g}}$.
Furthermore, we prove that the category $\mathcal{O}_{-\frac{1}{2}}$ is semisimple,
so any ordinary
$L_{\widehat{\mathfrak{g}}}(-\frac{1}{2},0)$-module is completely reducible.
More general, we consider the category $\mathcal{C}_{\mathcal{k}}$,
which is the full subcategory of the weak $L_{\widehat{\mathfrak{g}}}(\mathcal{k},0)$-module category
such that $M$ is an object in $\mathcal{C}_{\mathcal{k}}$ if and only if the sum of all positive root spaces of
$\widehat{\mathfrak{g}}$ acts locally nilpotently on $M$.
We prove that the category $\mathcal{C}_{-\frac{1}{2}}$ is semisimple
and there are finitely many irreducible weak $L_{\widehat{\mathfrak{g}}}(-\frac{1}{2},0)$-modules in the category $\mathcal{C}_{-\frac{1}{2}}$ up to isomorphism.
Furthermore, we consider the $\mathbb{Q}$-graded vertex operator superalgebras $(L_{\widehat{\mathfrak{g}}}(\mathcal{k},0),\omega_\xi)$
associated to a family of new Virasoro elements $\omega_\xi$, where $0<\xi<1$ is a rational number.
We apply the semisimplicity of the category $\mathcal{C}_{-\frac{1}{2}}$ to prove that
$(L_{\widehat{\mathfrak{g}}}(-\frac{1}{2},0),\omega_\xi)$
is a rational $\mathbb{Q}$-graded vertex operator superalgebra.
In \cite{DK}, the $A(V)$-theory of $\mathbb{Q}$-graded vertex operator superalgebra has been studied.
We use the $A(V)$-theory of $\mathbb{Q}$-graded vertex operator superalgebra to
determine the Zhu's algebra $A_{\omega_\xi}(L_{\widehat{\mathfrak{g}}}(-\frac{1}{2},0))$
of $(L_{\widehat{\mathfrak{g}}}(-\frac{1}{2},0),\omega_\xi)$.
We show that $A_{\omega_\xi}(L_{\widehat{\mathfrak{g}}}(-\frac{1}{2},0))$ is a finite-dimensional semisimple associative superalgebra
and $(L_{\widehat{\mathfrak{g}}}(-\frac{1}{2},0),\omega_\xi)$ is $C_2$-cofinite.
Finally we consider the case of the non-boundary admissible level $\frac{1}{2}$ to support our conjecture,
we show that the category $\mathcal{O}_{\frac{1}{2}}$ has infinitely many irreducible modules,
the category $\mathcal{C}_{\frac{1}{2}}$ is not semisimple
and $(L_{\widehat{\mathfrak{g}}}(\frac{1}{2},0),\omega_\xi)$ is not rational.

This paper is organized as follows.
In Section \ref{sec:2},
we recall some concepts and facts about vertex operator superalgebra, $A(V)$-theory
and affine Lie superalgebra $\widehat{\mathfrak{g}}=\widehat{{sl}(2|1)}$.
In Section \ref{sec:3},
we show that the category $\mathcal{O}_{-\frac{1}{2}}$ and $\mathcal{C}_{-\frac{1}{2}}$ are semisimple and have
finitely many irreducible modules.
In Section \ref{sec:4}, we determine the Zhu's algebra $A_{\omega_\xi}(L_{\widehat{\mathfrak{g}}}(-\frac{1}{2},0))$
and prove that $(L_{\widehat{\mathfrak{g}}}(-\frac{1}{2},0),\omega_\xi)$ is rational and $C_2$-cofinite.
In Section \ref{sec:5}, we show that the category $\mathcal{O}_{\frac{1}{2}}$ has infinitely many irreducible modules,
the category $\mathcal{C}_{\frac{1}{2}}$ is not semisimple
and $(L_{\widehat{\mathfrak{g}}}(\frac{1}{2},0),\omega_\xi)$ is not rational.
At last, we present our conjecture in the end of Section \ref{sec:5}.
Throughout the paper,
$\mathbb{Z}_+$, $\mathbb{Q}_+$ and $\mathbb{N}$ are the sets of
nonnegative integers, nonnegative rational numbers and positive integers respectively.

\section{Preliminaries}
\label{sec:2}
	\def\theequation{2.\arabic{equation}}
	\setcounter{equation}{0}

In this section, we recall some concepts and facts about vertex operator superalgebra, $A(V)$-theory
and affine Lie superalgebra $\widehat{{sl}(2|1)}$.

\subsection{Vertex operator superalgebras and their modules}
Let $V=V_{\bar{0}}\oplus V_{\bar{1}}$ be a vector superspace,
the elements in $V_{\bar{0}}$ (resp. $V_{\bar{1}}$) are called {\em even} (resp. {\em odd}).
For any $v\in V_{\bar{i}}$ with $i=0,1$, define $|v|=i$.
First we recall the definitions of $\mathbb{Z}$-graded (resp. $\mathbb{Q}$-graded) vertex operator superalgebra
and their various module categories following \cite{L}.

\begin{defi}\label{defivosa}
{\em A {\em vertex superalgebra} is a quadruple $(V,\textbf{1},D,Y)$,
where $V=V_{\bar{0}}\oplus V_{\bar{1}}$ is a vector superspace,
$D$ is an endomorphism of $V$, $\textbf{1}$ is a specified even vector called the {\em vacuum vector} of $V$,
and $Y$ is a linear map
\begin{equation*}
\begin{aligned}
Y(\cdot,z):&V\rightarrow (\mbox{End} V)[[z,z^{-1}]]\\
&a\mapsto Y(a,z)=\sum_{n\in\mathbb{Z}}a_n z^{-n-1} ~~(a_n\in\mbox{End} V)
\end{aligned}
\end{equation*}
such that\\
(1) For any $a,b\in V$, $a_nb=0$ for $n$ sufficiently large;\\
(2) $[D, Y(a,z)]=Y(D(a),z)=\frac{d}{dz}Y(a,z)$ for any $a\in V$;\\
(3) $Y(\textbf{1},z)=\mbox{Id}_V$;\\
(4) $Y(a,z)\textbf{1}\in V[[z]]$ and $\mbox{lim}_{z\rightarrow0}Y(a,z)\textbf{1}=a$ for any $a\in V$;\\
(5) For $\mathbb{Z}_2$-homogeneous elements $a,b\in V$, the following {\em Jacobi identity} holds:
\begin{equation*}
\begin{aligned}
  &z_0^{-1}\delta(\frac{z_1-z_2}{z_0})Y(a,z_1)Y(b,z_2)-(-1)^{|a||b|}z_0^{-1}\delta(\frac{z_2-z_1}{-z_0})Y(b,z_2)Y(a,z_1)\\
  &=z_2^{-1}\delta(\frac{z_1-z_0}{z_2})Y(Y(a,z_0)b,z_2).
 \end{aligned}
\end{equation*}

A vertex superalgebra $V$ is called a {\em $\mathbb{Z}$-graded} (resp. {\em $\mathbb{Q}$-graded}) {\em vertex operator superalgebra}
if there is an even vector $\omega$ called the {\em Virasoro element} of $V$ such that\\
(6) Set $Y(\omega,z)=\sum_{n\in\mathbb{Z}}L(n) z^{-n-2}$, for any $m,n\in\mathbb{Z}$,
$$[L(m), L(n)]=(m-n)L(m+n)+\frac{m^{3}-m}{12}\delta_{m+n,0}c,$$
where $c\in\mathbb{C}$ is called the {\em central charge};\\
(7) $L(-1)=D$;\\
(8) $V$ is $\mathbb{Z}$-graded (resp. $\mathbb{Q}$-graded) such that $V=\bigoplus_{n\in\mathbb{Z}}V_{(n)}$ (resp. $V=\bigoplus_{n\in\mathbb{Q}}V_{(n)}$),
$L(0)\mid_{V_{(n)}}=n \mbox{Id}_{V_{(n)}}$, $\mbox{dim}~V_{(n)}<\infty$ for all $n\in\mathbb{Z}$ (resp. $n\in\mathbb{Q}$) and
$V_{(n)}=0$ for $n$ sufficiently small. For $v\in V_{(n)}$,
the {\em conformal weight} of $v$ is defined to be $\mbox{wt}~v=n$.}
\end{defi}

\begin{defi}
{\em  Let $V$ be a $\mathbb{Z}$-graded (resp. $\mathbb{Q}$-graded) vertex operator superalgebra.
A {\em weak $V$-module} is a vector supersapce $M=M_{\bar{0}}\oplus M_{\bar{1}}$
equipped with a linear map
\begin{equation*}
\begin{aligned}
Y_M(\cdot,z):~&V\rightarrow (\mbox{End} M)[[z,z^{-1}]]\\
&a\mapsto Y_M(a,z)=\sum_{n\in\mathbb{Z}}a_n z^{-n-1} ~~(a_n\in\mbox{End} M)
\end{aligned}
\end{equation*}
such that\\
(1) For any $a\in V,u\in M$, $a_nu=0$ for $n$ sufficiently large;\\
(2) $Y_M(\textbf{1},z)=\mbox{Id}_M$;\\
(3) For $\mathbb{Z}_2$-homogeneous elements $a,b\in V$, the following {\em Jacobi identity} holds:
\begin{equation*}
\begin{aligned}
  &z_0^{-1}\delta(\frac{z_1-z_2}{z_0})Y_M(a,z_1)Y_M(b,z_2)-(-1)^{|a||b|}z_0^{-1}\delta(\frac{z_2-z_1}{-z_0})Y_M(b,z_2)Y_M(a,z_1)\\
  &=z_2^{-1}\delta(\frac{z_1-z_0}{z_2})Y_M(Y(a,z_0)b,z_2).
 \end{aligned}
\end{equation*}

A weak $V$-module $M$ is called a {\em $\mathbb{Z}_+$-graded} (resp. {\em $\mathbb{Q}_+$-graded}) {\em weak module}
if $M$ has a $\mathbb{Z}_+$-gradation $M=\bigoplus_{n\in\mathbb{Z}_+}M(n)$ (resp. $\mathbb{Q}_+$-gradation $M=\bigoplus_{n\in\mathbb{Q}_+}M(n)$) such that
\begin{equation*}
  a_mM(n)\subseteq M(\mbox{wt}~a+n-m-1)
\end{equation*}
for any homogeneous element $a\in V, m\in\mathbb{Z}, n\in\mathbb{Z}_+~~(\mbox{resp.}~~\mathbb{Q}_+)$.

A weak $V$-module $M$ is called an {\em ordinary module}
if $M=\bigoplus_{\lambda\in\mathbb{C}}M_{\lambda}$, where $M_{\lambda}=\{w\in M \mid L(0)w=\lambda w\}$ such that
$\mbox{dim}~M_{\lambda}<\infty$ for all $\lambda\in\mathbb{C}$ and
$M_{\lambda}=0$ for the real part of $\lambda$ sufficiently small.}
\end{defi}

\begin{defi}
{\em A $\mathbb{Z}$-graded (resp. $\mathbb{Q}$-graded) vertex operator superalgebra $V$
is called {\em rational} if any $\mathbb{Z}_+$-graded (resp. $\mathbb{Q}_+$-graded) weak $V$-module is a direct
sum of irreducible $\mathbb{Z}_+$-graded (resp. $\mathbb{Q}_+$-graded) weak $V$-modules.  }
\end{defi}

Let $(V,Y,\textbf{1},\omega)$ be a $\mathbb{Z}$-graded vertex operator superalgebra,
$h\in V_{(1)}$ a vector satisfying the following conditions:
\begin{equation}\label{eql}
\begin{aligned}
&[L(m),h_n]=-nh_{m+n}-\frac{m^2+m}{2}\delta_{m+n,0}\kappa_1\mbox{Id}_V,\\
&[h_m,h_n]=2m\delta_{m+n,0}\kappa_2\mbox{Id}_V,
\end{aligned}
\end{equation}
where $\kappa_1,\kappa_2\in\mathbb{C}$.
Assume that $h_0$ acts semisimply on $V$ and the eigenvalues of $h_0$ are rational numbers.
For a rational number $\xi$, set $\omega_\xi=\omega+\frac{\xi}{2}L(-1)h$ and $Y(\omega_\xi,z)=\sum_{n\in\mathbb{Z}}L^\prime(n) z^{-n-2}$.
Then we have
$$L^\prime(n)=L(n)-\frac{\xi}{2}(n+1)h_n~~\mbox{for~~any}~~n\in\mathbb{Z},$$
hence the component operators of $\omega_\xi$
satisfy the Virasoro relations and $L^\prime(-1)=L(-1)=D$.
Since $L(0)$ and $h_0$ are commutative,
each $V_{(n)}$ is a direct sum of eigenspaces of $h_0$.
Set $V_{(m,n)}=\{v\in V_{(m)}\mid h_0v=nv\}$ for $m\in\mathbb{Z},n\in \mathbb{Q}$.
Let
\begin{equation*}
V^\prime_{(m)}=\{v\in V\mid L^\prime(0)v=mv\}=\coprod_{s\in\mathbb{Z},t\in \mathbb{Q},s-\frac{\xi t}{2}=m}V_{(s,t)}.
\end{equation*}
Then we have the following proposition (cf. \cite{AV,DLM2,Lin}).

\begin{prop}\label{propqvosa}
Let $(V,Y,\textbf{1},\omega)$ be a $\mathbb{Z}$-graded vertex operator superalgebra
with central charge $c$. Let $h\in V_{(1)}$ be a vector satisfying the conditions (\ref{eql}),
$h_0$ acts semisimply on $V$ such that the eigenvalues of $h_0$ are rational numbers.
Suppose that $\mbox{dim}~V^\prime_{(m)}<\infty$ for any $m\in\mathbb{Q}$
and $V^\prime_{(m)}=0$ for $m$ sufficiently small.
Then $(V,Y,\textbf{1},\omega_\xi)$ is a $\mathbb{Q}$-graded vertex operator superalgebra with central charge $c-6\xi(\kappa_1+\xi\kappa_2)$.
\end{prop}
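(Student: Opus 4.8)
The plan is to verify the eight axioms of Definition \ref{defivosa} for the quadruple $(V,Y,\textbf{1},\omega_\xi)$, reusing the underlying vertex superalgebra structure $(V,\textbf{1},D,Y)$ without change. Axioms (1)--(5) then hold automatically, since only the conformal vector is being replaced, so all the work concentrates on axioms (6)--(8). The formula for the modes of $\omega_\xi$ is already recorded in the excerpt: because $Y(L(-1)h,z)=\frac{d}{dz}Y(h,z)=\sum_{n}(-n-1)h_n z^{-n-2}$, one reads off $L^\prime(n)=L(n)-\frac{\xi}{2}(n+1)h_n$, and in particular $L^\prime(-1)=L(-1)=D$, which is axiom (7).

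The heart of the matter is axiom (6), the Virasoro relation for the modes $L^\prime(m)$. First I would expand
\[
[L^\prime(m),L^\prime(n)]=[L(m),L(n)]-\tfrac{\xi}{2}(n+1)[L(m),h_n]-\tfrac{\xi}{2}(m+1)[h_m,L(n)]+\tfrac{\xi^2}{4}(m+1)(n+1)[h_m,h_n]
\]
and substitute the four bracket relations: the Virasoro relation for $[L(m),L(n)]$, the two relations (\ref{eql}) for $[L(m),h_n]$ and (after an antisymmetry rearrangement) for $[h_m,L(n)]$, and $[h_m,h_n]=2m\delta_{m+n,0}\kappa_2$. The non-central $h_{m+n}$ terms collect into $\frac{\xi}{2}\bigl[n(n+1)-m(m+1)\bigr]h_{m+n}=-\frac{\xi}{2}(m-n)(m+n+1)h_{m+n}$, which is exactly what is needed so that $(m-n)L(m+n)$ together with these terms reassembles into $(m-n)L^\prime(m+n)$. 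The main obstacle is then the careful bookkeeping of the central (delta) terms: setting $n=-m$, the two $\kappa_1$ contributions combine to $-\frac{\xi}{2}(m^3-m)\kappa_1$ and the $\kappa_2$ contribution to $-\frac{\xi^2}{2}(m^3-m)\kappa_2$, so that together with $\frac{m^3-m}{12}c$ the total delta coefficient equals $\frac{m^3-m}{12}\bigl[c-6\xi(\kappa_1+\xi\kappa_2)\bigr]$. This establishes the Virasoro relation for $\omega_\xi$ with central charge $c-6\xi(\kappa_1+\xi\kappa_2)$, as claimed.

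Finally I would check the grading axiom (8). Since $L^\prime(0)=L(0)-\frac{\xi}{2}h_0$ and $L(0)$, $h_0$ commute and act semisimply with integer and rational eigenvalues respectively, $L^\prime(0)$ acts semisimply with rational eigenvalues, and its eigenspace decomposition is precisely the one displayed before the statement, $V^\prime_{(m)}=\coprod_{s-\xi t/2=m}V_{(s,t)}$. The two remaining conditions, namely $\dim V^\prime_{(m)}<\infty$ for every $m\in\mathbb{Q}$ and $V^\prime_{(m)}=0$ for $m$ sufficiently small, are imposed as hypotheses of the proposition, so $V=\bigoplus_{m\in\mathbb{Q}}V^\prime_{(m)}$ is a genuine $\mathbb{Q}$-grading satisfying all requirements. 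With axioms (6)--(8) verified and (1)--(5) inherited, $(V,Y,\textbf{1},\omega_\xi)$ is a $\mathbb{Q}$-graded vertex operator superalgebra of the asserted central charge.
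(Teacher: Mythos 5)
Your proposal is correct: the mode expansion $L^\prime(n)=L(n)-\frac{\xi}{2}(n+1)h_n$, the reassembly of the non-central terms into $(m-n)L^\prime(m+n)$, and the central-term bookkeeping giving $\frac{m^3-m}{12}\bigl[c-6\xi(\kappa_1+\xi\kappa_2)\bigr]$ all check out, as does deferring the grading conditions to the stated hypotheses. The paper itself offers no proof of this proposition, citing \cite{AV,DLM2,Lin} instead, and your direct verification is essentially the standard argument found in those references, so there is nothing to compare beyond noting that you have supplied in full the computation the paper outsources.
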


\subsection{Zhu's algebra and $C_2$-cofiniteness}
Let $V$ be a $\mathbb{Q}$-graded vertex operator superalgebra.
Define a function $\varepsilon$ for all homogeneous elements of $V$ as follows:
\begin{equation*}
\varepsilon(a)=\begin{cases}
1,& \mbox{wt}~a\in\mathbb{Z},\\
0,& \mbox{wt}~a\notin\mathbb{Z}.\\
\end{cases}
\end{equation*}
For any homogeneous element $a\in V$, we define
\begin{equation}
a\ast b=\varepsilon(a)\mbox{Res}_z\frac{(1+z)^{[{\rm wt}~a]}}{z}Y(a,z)b
\end{equation}
for any $b\in V$, where $[\cdot]$ denotes the greatest-integer function.
Then we can extend $\ast$ on $V$.
Let $O(V)$ be the subspace of $V$ linearly spanned by
\begin{equation}
\mbox{Res}_z\frac{(1+z)^{[{\rm wt}~a]}}{z^{1+\varepsilon(a)}}Y(a,z)b
\end{equation}
for any homogeneous element $a\in V$ and for any $b\in V$.
Similar to the Lemma 2.1.2 of \cite{Z}, we have
\begin{equation*}
\mbox{Res}_z\frac{(1+z)^{[{\rm wt}~a]+m}}{z^{1+\varepsilon(a)+n}}Y(a,z)b\in O(V)
\end{equation*}
for $n\geq m\geq0$.
Let $M$ be any weak $V$-module, we define
\begin{equation}\nonumber
\Omega(M)=\{u\in M\mid a_m u=0~~\mbox{for}~~a\in V, m>\mbox{wt}~a-1\},
\end{equation}
and $o$ to be the linear map from $V$ to $\mbox{End}~\Omega(M)$ such that $o(a)=\varepsilon(a)a_{[{\rm wt}~a]-1}$
for any homogeneous element $a\in V$.
The following theorem was established in \cite{DK}.
\begin{thm}\label{thmzhu}
Let $V$ be a $\mathbb{Q}$-graded vertex operator superalgebra and $M$ a weak $V$-module.\\
{\rm (a)} The subspace $O(V)$ is a two-sided ideal of $V$ with respect to the product $\ast$
and $A(V)=V/O(V)$ is an associative superalgebra with identity $\textbf{1}+O(V)$.
Moreover, $\omega+O(V)$ lies in the center of $A(V)$.\\
{\rm (b)} $\Omega(M)$ is an $A(V)$-module with $a$ acts as $o(a)$ for any $a\in V$.\\
{\rm (c)} There is an induction functor $L$ from the category of $A(V)$-modules to the category of $\mathbb{Q}_+$-graded weak $V$-modules.
Moreover, $\Omega(L(U))=U$ for any $A(V)$-module $U$.\\
{\rm (d)} $\Omega$ and $L$ are inverse bijections between the sets of irreducible modules in each category.\\
\end{thm}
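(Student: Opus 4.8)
The plan is to adapt Zhu's original construction in \cite{Z} to the $\mathbb{Q}$-graded super setting, carrying the super signs $(-1)^{|a||b|}$ through every residue computation and using the function $\varepsilon$ to suppress the contributions of fractionally-graded elements, so that the associative structure is governed by the integer-graded part of $V$. The displayed consequence already noted, namely $\mbox{Res}_z\frac{(1+z)^{[{\rm wt}\,a]+m}}{z^{1+\varepsilon(a)+n}}Y(a,z)b\in O(V)$ for $n\geq m\geq0$, is the key stabilizing lemma; it follows from the defining spanning set of $O(V)$ together with the $L(-1)$-derivative property (3) and an induction on $m$. From it and the super skew-symmetry $Y(a,z)b=(-1)^{|a||b|}e^{zD}Y(b,-z)a$ one sees that whenever $\varepsilon(a)=0$ the element $\mbox{Res}_z\frac{(1+z)^{[{\rm wt}\,a]}}{z}Y(a,z)b$ lies in $O(V)$, so that all fractionally-graded elements act trivially modulo $O(V)$; this collapses the bulk of the computation onto the integer-weight subspace, where the analysis runs parallel to the classical case.

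For part (a) I would verify that $O(V)$ is a two-sided ideal for $\ast$ and that $\ast$ is associative modulo $O(V)$. The essential tool is the iterate formula extracted from the Jacobi identity (5), which rewrites $Y(Y(a,z_0)b,z_2)$ in terms of the products $Y(a,z_1)Y(b,z_2)$ and $Y(b,z_2)Y(a,z_1)$; integrating this against the rational kernels appearing in the definitions of $\ast$ and $O(V)$, and tracking the sign $(-1)^{|a||b|}$, yields $a\ast(b\ast c)-(a\ast b)\ast c\in O(V)$ as well as the containments $a\ast O(V)\subseteq O(V)$ and $O(V)\ast a\subseteq O(V)$. That $\textbf{1}+O(V)$ is a two-sided identity is immediate from $Y(\textbf{1},z)=\mbox{Id}_V$, and the centrality of $\omega+O(V)$ follows from $o(\omega)=L(0)$ together with the Virasoro bracket and relation $(L(-1)a)_n=-na_{n-1}$, which force $\omega\ast a-a\ast\omega\in O(V)$ for every homogeneous $a$ (the sign being trivial since $\omega$ is even).

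For part (b) I would check the three statements that make $\Omega(M)$ into an $A(V)$-module: that $o(a)=\varepsilon(a)a_{[{\rm wt}\,a]-1}$ preserves $\Omega(M)$ (here the factor $\varepsilon(a)$ is precisely what prevents the operator from shifting $\Omega(M)$ into a different fractional sector), that $o(O(V))$ annihilates $\Omega(M)$, and that $o(a)o(b)=o(a\ast b)$ on $\Omega(M)$. All three reduce to residue identities of the same type as in the first step, now applied to the module vertex operators $Y_M$ and exploiting that elements of $\Omega(M)$ are killed by all modes $a_m$ with $m>\mbox{wt}\,a-1$.

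Finally, for parts (c) and (d) I would invoke the induced-module construction of Zhu, in the form adapted to vertex operator superalgebras: from an $A(V)$-module $U$ one builds a universal $\mathbb{Q}_+$-graded weak $V$-module $L(U)$ whose degree-zero space is $U$, and then establishes $\Omega(L(U))=U$. I expect this to be the main obstacle. The construction of $L(U)$ and the verification that its lowest space is \emph{not} enlarged requires a PBW-type filtration argument that is simultaneously compatible with the $\mathbb{Q}$-grading and the $\mathbb{Z}_2$-grading, and the fractional sectors must be shown to decouple from the degree-zero part. Granting $\Omega(L(U))=U$, part (d) follows by proving that $U$ is an irreducible $A(V)$-module if and only if $L(U)$ is an irreducible weak $V$-module, whence $\Omega$ and $L$ descend to mutually inverse bijections on isomorphism classes of irreducibles; the super signs and the fractional grading complicate the bookkeeping throughout but do not change the underlying mechanism of the argument established in \cite{Z} and \cite{DK}.
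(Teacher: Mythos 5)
You should know at the outset that the paper contains no proof of this theorem: it is quoted directly from \cite{DK} (``The following theorem was established in \cite{DK}''), so there is no in-paper argument to compare against, and your outline follows the same standard Zhu-style route taken in that reference and in \cite{Z,DLM2,Lin}. Your sketch of parts (a) and (b) is sound, though one appeal is superfluous: to see that fractional-weight vectors vanish in $A(V)$ you do not need skew-symmetry, since for $\varepsilon(a)=0$ one has $1+\varepsilon(a)=1$, so taking $b=\textbf{1}$ in the spanning set of $O(V)$ gives $a=\mathrm{Res}_z\frac{(1+z)^{[\mathrm{wt}\,a]}}{z}Y(a,z)\textbf{1}\in O(V)$ directly. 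The one point in part (c) that must be stated more carefully is your description of $L(U)$ as ``a universal $\mathbb{Q}_+$-graded weak $V$-module'': for the genuinely universal induced module $M(U)$ the equality $\Omega(M(U))=U$ can fail, because higher-degree singular vectors (homogeneous $w$ of degree $n>0$ killed by all modes $a_m$ with $m>\mathrm{wt}\,a-1$) lie in $\Omega$; the functor $L$ must be defined as the quotient of $M(U)$ by the maximal graded submodule intersecting the degree-zero space $U$ trivially, and then $\Omega(L(U))=U$ follows exactly because any such $w$ would generate a graded submodule supported in degrees $\geq n$, contradicting maximality. With that correction your plan matches the established argument, and part (d) follows as you indicate.
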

\vspace{-0.5cm}

The following proposition was established in \cite{Lin}.

\begin{prop}\label{prop22}
Let $V$ be a $\mathbb{Q}$-graded vertex operator superalgebra.
If $V$ is rational as a $\mathbb{Q}$-graded vertex operator superalgebra,
then $A(V)$ is a finite-dimensional semisimple associative superalgebra.
\end{prop}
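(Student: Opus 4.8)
The plan is to run the standard Zhu-algebra argument in the $\mathbb{Q}$-graded super setting, using Theorem~\ref{thmzhu} as the engine. The point is that rationality of $V$ transports, via the functors $\Omega$ and $L$, into complete reducibility of \emph{all} $A(V)$-modules, which forces $A(V)$ to be a semisimple ring; finite-dimensionality is then extracted from the fact that the simple $A(V)$-modules arise as top levels of ordinary $V$-modules.

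First I would establish semisimplicity. Let $U$ be an arbitrary $A(V)$-module. By Theorem~\ref{thmzhu}(c) the induced module $L(U)$ is a $\mathbb{Q}_+$-graded weak $V$-module with $\Omega(L(U))=U$. Since $V$ is rational, $L(U)$ decomposes as $L(U)=\bigoplus_{i}W_i$ with each $W_i$ an irreducible $\mathbb{Q}_+$-graded weak $V$-module. The functor $\Omega$ is defined by the annihilation conditions $a_m u=0$ for $m>\mathrm{wt}\,a-1$, which are checked componentwise, so $\Omega$ commutes with arbitrary direct sums; hence
\[
U=\Omega(L(U))=\bigoplus_{i}\Omega(W_i).
\]
By Theorem~\ref{thmzhu}(d) each $\Omega(W_i)$ is an irreducible $A(V)$-module, so $U$ is completely reducible as a $\mathbb{Z}_2$-graded module. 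Thus every $A(V)$-module is completely reducible, which is precisely the statement that the associative superalgebra $A(V)$ is semisimple. In particular $A(V)$, viewed as a module over itself, is cyclic (generated by $\mathbf{1}+O(V)$) and completely reducible, hence a finite direct sum of simple submodules; so there are only finitely many isomorphism classes of irreducible $A(V)$-modules, and $A(V)$ decomposes by the super Wedderburn theorem into finitely many simple two-sided ideals.

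It remains to prove finite-dimensionality, and this is where the real work lies. By Theorem~\ref{thmzhu}(d) every irreducible $A(V)$-module is of the form $U=\Omega(W)$ for an irreducible $\mathbb{Q}_+$-graded weak $V$-module $W\cong L(U)$, and $U$ is precisely the lowest conformal-weight space of $W$. The key step I would carry out is to show that each such irreducible $W$ is ordinary, i.e.\ that its $L'(0)$-eigenspaces are finite-dimensional; granting this, $U=\Omega(W)$ is finite-dimensional. With the finitely many simple $A(V)$-modules all finite-dimensional and $\mathbb{C}$ algebraically closed, the super-Schur lemma identifies each simple two-sided ideal as a full matrix superalgebra — the endomorphism super-division algebra of a finite-dimensional simple supermodule over $\mathbb{C}$ being either $\mathbb{C}$ or the rank-one odd division superalgebra — so a finite sum of finite-dimensional pieces yields $\dim A(V)<\infty$.

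The main obstacle is the ordinariness step: rationality as stated only guarantees decomposition into irreducible $\mathbb{Q}_+$-graded weak modules, not a priori that these have finite-dimensional graded pieces. I would handle it by adapting the Dong--Li--Mason argument that over a rational vertex operator (super)algebra every irreducible admissible module is ordinary, exploiting that semisimplicity already bounds the number of irreducibles and that each graded piece of $L(U)$ is spanned by the images of the top level $U$ under finitely many modes, so control of $\dim U$ propagates to all degrees.
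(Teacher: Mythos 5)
Note first that the paper does not prove this proposition at all: it is quoted verbatim from \cite{Lin}, so your proposal has to be measured against the standard argument rather than an in-paper proof. Your first half, semisimplicity, is exactly that standard argument and is correct: induce an arbitrary $A(V)$-module $U$ to the $\mathbb{Q}_+$-graded weak module $L(U)$, decompose it by rationality, and pull the decomposition back through $\Omega$, which does commute with direct sums since it is cut out by componentwise annihilation conditions; Theorem \ref{thmzhu}(c),(d) supplies the two facts you use, namely $\Omega(L(U))=U$ and irreducibility of each $\Omega(W_i)$. The observation that the regular module is cyclic, hence a finite sum of simples, is also right and gives finitely many isomorphism classes of irreducibles.

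The finite-dimensionality half, however, has a genuine gap, sitting exactly where you flagged ``the real work.'' First, your propagation mechanism is circular: you invoke ordinariness of $W=L(U)$ in order to conclude $\dim U<\infty$, but the sketch (``control of $\dim U$ propagates to all degrees'') presupposes that the top level $U$ --- itself one of the graded pieces --- is already finite-dimensional, and nothing in your first half delivers this. Semisimplicity plus Wedderburn--Artin only gives $A(V)\cong\bigoplus_{i=1}^{k}M_{n_i}(D_i)$ with division (super)algebras $D_i$ that a priori may be infinite-dimensional over $\mathbb{C}$; your super-Schur step explicitly \emph{assumes} the simple modules are finite-dimensional, which is the very point at issue. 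Second, the spanning claim is false as stated: for fixed degree $n$, the piece $L(U)(n)$ is spanned by the vectors $a_{\mathrm{wt}\,a-1-n}u$ with $a$ ranging over the whole infinite-dimensional algebra $V$, so there are infinitely many relevant modes, and without $C_2$-cofiniteness or special structure (in the affine case of this paper, finiteness of graded pieces comes from PBW considerations, not mode counting) no finite spanning set exists. The standard repair makes ordinariness unnecessary: since $V=\bigoplus_{n\in\mathbb{Q}}V_{(n)}$ has finite-dimensional pieces and $\mathbb{Q}$ is countable, $V$ and hence its quotient $A(V)$ have countable dimension over $\mathbb{C}$; every simple $A(V)$-module is cyclic and therefore countable-dimensional, so Dixmier's version of Schur's lemma over the uncountable algebraically closed field $\mathbb{C}$ forces each $D_i=\mathbb{C}$ (in the super setting each simple summand is $M_{m|n}(\mathbb{C})$ or a queer superalgebra $Q_n(\mathbb{C})$), and the finite Wedderburn decomposition you already obtained from cyclicity of the regular module then yields $\dim A(V)<\infty$ directly. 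With that substitution your argument closes; as written, it does not.
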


For $a\in V$, denote by $\overline{a}$ the image of $a$ under the projection of $V$ onto $A(V)$.
Similar to the Proposition 1.4.2 of \cite{FZ}, we have the following lemma.

\begin{lem}\label{lemfz}
Let $V$ be a $\mathbb{Q}$-graded vertex operator superalgebra and $I$ an ideal of $V$.
Then $(I+O(V))/O(V)$ is a two-sided ideal of $A(V)$ and $A(V/I)\cong A(V)/((I+O(V))/O(V))$.\\
\end{lem}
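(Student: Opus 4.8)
The plan is to adapt the argument of Proposition 1.4.2 of \cite{FZ} to the present $\mathbb{Q}$-graded super setting, using the $A(V)$-theory recorded in Theorem \ref{thmzhu}. The first thing to record is that, since $\omega\in V$ and $I$ is an ideal, $I$ is invariant under $L(0)=\omega_1$ and under $D=L(-1)=\omega_0$. Consequently $I$ is a graded subspace, the quotient $V/I$ inherits the $\mathbb{Q}$-grading and the Virasoro element, and the canonical surjection $\pi\colon V\to V/I$ is a homomorphism of $\mathbb{Q}$-graded vertex operator superalgebras that preserves conformal weights. In particular, for homogeneous $a\in V$ one has $\mathrm{wt}\,\pi(a)=\mathrm{wt}\,a$ and hence $\varepsilon(\pi(a))=\varepsilon(a)$, so the functions $[\cdot]$ and $\varepsilon$ entering the definitions of $\ast$ and $O(\cdot)$ behave identically on $V$ and on $V/I$.

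For the first assertion I would verify directly that $a\ast u\in I$ and $u\ast a\in I$ whenever $a\in V$ and $u\in I$. Indeed $a\ast u=\varepsilon(a)\,\mathrm{Res}_z\frac{(1+z)^{[\mathrm{wt}\,a]}}{z}Y(a,z)u$ is a finite linear combination of the vectors $a_nu$, each of which lies in $I$ by the defining property of an ideal; likewise, expanding $Y(u,z)a$ via super skew-symmetry as a combination of the vectors $D^k(a_mu)$ and using that $I$ is both an ideal and $D$-invariant shows $u\ast a\in I$. Passing to $A(V)=V/O(V)$, this proves that $(I+O(V))/O(V)$ absorbs the $\ast$-product on both sides, hence is a two-sided ideal.

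For the isomorphism I would build the map from $\pi$. Composing $\pi$ with the projection $V/I\to A(V/I)$ gives a linear map $V\to A(V/I)$; since $\pi$ intertwines the vertex operators and preserves weights and $\varepsilon$, it sends each spanning vector of $O(V)$ to a spanning vector of $O(V/I)$, so $\pi(O(V))\subseteq O(V/I)$ and the map descends to an algebra homomorphism $\phi\colon A(V)\to A(V/I)$ respecting $\ast$. It is surjective because $\pi$ is. The crux is the identification of the kernel, which reduces to the equality $O(V/I)=\pi(O(V))$, equivalently $\pi^{-1}(O(V/I))=O(V)+I$. One inclusion is already noted; for the other I would lift a homogeneous generator of $O(V/I)$ by choosing homogeneous preimages $a,b\in V$ of the given homogeneous $\bar a,\bar b\in V/I$ of the same weight, and observe that the corresponding generator of $O(V/I)$ is exactly $\pi$ applied to the matching generator of $O(V)$. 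Granting this, $\ker\phi=\pi^{-1}(O(V/I))/O(V)=(O(V)+I)/O(V)=(I+O(V))/O(V)$, and the first isomorphism theorem yields $A(V/I)\cong A(V)/((I+O(V))/O(V))$.

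The main obstacle is precisely the equality $\pi^{-1}(O(V/I))=O(V)+I$: the inclusion $\supseteq$ is routine, but the reverse inclusion requires knowing that \emph{every} element of $O(V/I)$ lifts to $O(V)$ modulo $I$. This is where the preservation of the grading, of the integrality test $\varepsilon$, and of the truncated weights $[\mathrm{wt}\,a]$ under $\pi$ is essential, since these data determine the shape of the spanning vectors of $O(\cdot)$; the ability to choose homogeneous lifts of the same weight is what makes the generators of $O(V/I)$ match, term by term, the $\pi$-images of generators of $O(V)$. Everything else is formal bookkeeping transported from \cite{FZ} to the $\mathbb{Q}$-graded super framework of Theorem \ref{thmzhu}.
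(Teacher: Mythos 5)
Your proposal is correct and follows exactly the route the paper intends: the paper gives no proof of this lemma beyond the remark that it is ``similar to Proposition 1.4.2 of \cite{FZ}'', and your argument is precisely that adaptation, including the two points where the $\mathbb{Q}$-graded setting requires care — that $I$ is $L(0)$- and $D$-invariant (hence graded, so homogeneous lifts of the same weight exist) and that $\pi$ preserves $\mathrm{wt}$, $[\mathrm{wt}\,\cdot]$ and $\varepsilon$, so the spanning vectors of $O(V/I)$ match the $\pi$-images of those of $O(V)$ term by term, giving $\pi^{-1}(O(V/I))=O(V)+I$. No gaps.
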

\vspace{-0.5cm}

Let $V$ be a $\mathbb{Q}$-graded vertex operator superalgebra
and $M$ a weak $V$-module.
Define $C_2(M)$ to be the subspace linearly spanned by $a_{-2}M$ for $a\in V_{(n)}(n\in\mathbb{Z})$
and $a_{-1}M$ for $a\in V_{(n)}(n\notin\mathbb{Z})$ (cf. \cite{AV,DLM2}).
Define bilinear products ''$\cdot$'' and ''$\circ$'' on $V$ as follows:
For $a\in V_{(m)},b\in V_{(n)}$ we define $a\cdot b=a_{-1}b$ and $a\circ b=a_{0}b$ if $m,n\in\mathbb{Z}$,
otherwise we define $a\cdot b=0$ and $a\circ b=0$.
Set $A_2(M)=M/C_2(M)$.
Similar to the Section 4.4 of \cite{Z} (see also the Proposition 3.10 of \cite{DLM2}) we have the following proposition.

\begin{prop}
The subspace $C_2(V)$ is a two-sided ideal for both $(V,\cdot)$ and $(V,\circ)$.
Moreover, $(A_2(V),\cdot,\circ)$ is a commutative Poisson superalgebra such that $(A_2(V),\cdot)$ is a commutative associative superalgebra
with identity $\textbf{1}+C_2(V)$ and $(A_2(V),\circ)$ is a Lie superalgebra with
$(a\cdot b)\circ c=a\cdot(b\circ c)+(-1)^{|b||c|}(a\circ c)\cdot b$ for any $a,b,c\in A_2(V)$.
\end{prop}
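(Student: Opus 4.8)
The plan is to follow the classical arguments of Zhu (Section 4.4 of \cite{Z}) and of Dong--Li--Mason (Proposition 3.10 of \cite{DLM2}), adapting every step to the $\mathbb{Q}$-grading; the one genuinely new feature is that I must track, at each stage, whether an intermediate expression has integer or non-integer conformal weight, since that determines which generating set of $C_2(V)$ it belongs to. First I would record the consequences of the Jacobi identity used throughout: the commutator formula $[a_m,b_n]=\sum_{i\geq0}\binom{m}{i}(a_ib)_{m+n-i}$, the iterate (associativity) formula expressing $(a_mb)_n$ as a sum of composites of modes of $a$ and $b$, the skew-symmetry $a_nb=(-1)^{|a||b|}\sum_{j\geq0}\frac{(-1)^{n+j+1}}{j!}L(-1)^j(b_{n+j}a)$, and the derivation property $(L(-1)a)_n=-na_{n-1}$. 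Using $(L(-1)a)_{-1}\mathbf{1}=a_{-2}\mathbf{1}$ together with the creation axiom, I would then establish three auxiliary facts: $L(-1)^jw\in C_2(V)$ for every $j\geq1$ and every integer-weight $w$; $a_{-n}v\in C_2(V)$ for integer-weight $a$ and $n\geq2$; and $a_{-n}v\in C_2(V)$ for non-integer-weight $a$ and $n\geq1$. All three follow by iterating $a_{-n}=\frac1{n-1}(L(-1)a)_{-(n-1)}$ down to the two defining types of $C_2(V)$.

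To see that $C_2(V)$ is a two-sided ideal for both $\cdot$ and $\circ$, observe that since each product vanishes as soon as one factor has non-integer weight, it suffices to move an integer-weight mode $b_{-1}$ (resp. $b_0$) past a generator of $C_2(V)$. Applying the commutator formula to $b_{-1}(a_{-2}v)$, $b_{-1}(a_{-1}v)$, $b_0(a_{-2}v)$ and $b_0(a_{-1}v)$, the straightened term $\pm a_{-k}(b_\bullet v)$ is again one of the defining generators of $C_2(V)$, while every commutator summand is a mode $(b_ia)_k v$ whose weight class I read off from $\mathrm{wt}(b_ia)=\mathrm{wt}(b)+\mathrm{wt}(a)-i-1$: when $a$ has integer weight these are modes with $k\leq-2$ of an integer-weight vector, and when $a$ has non-integer weight they are modes with $k\leq-1$ of a non-integer-weight vector, so in both cases they lie in $C_2(V)$ by the auxiliary facts. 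Hence $\cdot$ and $\circ$ descend to $A_2(V)$, and the left-ideal property together with the (anti)commutativity below gives the two-sided statement.

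The remaining algebra identities reduce to residue computations modulo $C_2(V)$, where again only integer-weight inputs matter since both sides vanish otherwise. Supercommutativity of $\cdot$ and anti-supersymmetry of $\circ$ come from skew-symmetry: the $j=0$ term yields $(-1)^{|a||b|}b\cdot a$ (resp. $-(-1)^{|a||b|}b\circ a$), and every $j\geq1$ term is $L(-1)^j$ applied to an integer-weight vector, hence in $C_2(V)$. For associativity of $\cdot$ and for the Poisson relation I would expand $(a_{-1}b)_{-1}c$ and $(a_{-1}b)_0c$ with the iterate formula; only the $i=0$ summand survives modulo $C_2(V)$, because each $i\geq1$ summand carries an outer mode of index $\leq-2$, and the $i=0$ summand (after one use of supercommutativity to rewrite $b\cdot(a\circ c)$ as $(-1)^{|b||c|}(a\circ c)\cdot b$) gives exactly $a\cdot(b\cdot c)$ and $a\cdot(b\circ c)+(-1)^{|b||c|}(a\circ c)\cdot b$. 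The super-Jacobi identity for $\circ$ is then immediate from $[a_0,b_0]=(a_0b)_0$, and $\mathbf{1}+C_2(V)$ is the identity for $\cdot$ since $\mathbf{1}_{-1}=\mathrm{Id}$. The main obstacle is not any single estimate but the pervasive bookkeeping: unlike the $\mathbb{Z}$-graded case, the correct threshold for membership in $C_2(V)$ ($k\leq-2$ for integer weight versus $k\leq-1$ for non-integer weight) shifts with the weight class of each $b_ia$, so this dichotomy must be propagated consistently through every commutator and iterate expansion.
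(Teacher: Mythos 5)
Your proposal is correct and follows essentially the route the paper itself invokes: the paper gives no proof but cites Zhu's Section 4.4 and Proposition 3.10 of \cite{DLM2}, and your argument is exactly that standard commutator/iterate/skew-symmetry computation, with the integer versus non-integer weight dichotomy (index $\leq -2$ versus $\leq -1$ for membership in $C_2(V)$) propagated correctly through each expansion, just as in the $\mathbb{Q}$-graded setting of \cite{DLM2}. The auxiliary facts you isolate ($L(-1)^jw\in C_2(V)$ for integer-weight $w$, and the mode-index thresholds in each weight class) are precisely the right lemmas, and your sign bookkeeping in the Leibniz identity reproduces the stated $(-1)^{|b||c|}$.
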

\begin{defi}
{\em A $\mathbb{Q}$-graded vertex operator superalgebra $V$ is called {\em $C_2$-cofinite} if $A_2(V)$ is finite-dimensional.}
\end{defi}
\subsection{Affine Lie superalgebra $\widehat{sl(2|1)}$}

Let $\mathfrak{g}={sl}(2|1)$ be the finite dimensional simple Lie superalgebra with the basis
\begin{equation*}
\begin{aligned}
&h_1=\begin{pmatrix} 1 & 0 & 0\\ 0 & 0 & 0\\0 & 0 & 1 \end{pmatrix},
~~~~~~~e_{12}=\begin{pmatrix} 0 & 1 & 0\\ 0 & 0 & 0\\0 & 0 & 0 \end{pmatrix},
~~e_1=\begin{pmatrix} 0 & 0 & 1\\ 0 & 0 & 0\\0 & 0 & 0 \end{pmatrix},
~~e_2=\begin{pmatrix} 0 & 0 & 0\\ 0 & 0 & 0\\0 & 1 & 0 \end{pmatrix},\\
&h_2=\begin{pmatrix} 0 & 0 & 0\\ 0 & -1 & 0\\0 & 0 & -1 \end{pmatrix},
~~f_{12}=\begin{pmatrix} 0 & 0 & 0\\ 1 & 0 & 0\\0 & 0 & 0 \end{pmatrix},
~~f_1=\begin{pmatrix} 0 & 0 & 0\\ 0 & 0 & 0\\1 & 0 & 0 \end{pmatrix},
~~f_2=\begin{pmatrix} 0 & 0 & 0\\ 0 & 0 & -1\\0 & 0 & 0 \end{pmatrix},
\end{aligned}
\end{equation*}
where the even part $\mathfrak{g}_0=\mbox{span}_{\mathbb{C}}\{h_1,h_2,e_{12},f_{12}\}$ and
the odd part $\mathfrak{g}_1=\mbox{span}_{\mathbb{C}}\{e_1,e_2,f_1,f_2\}$, set $h_+=h_1+h_2,h_-=h_1-h_2$.
Let $\mathfrak{h}=\mathbb{C}h_1\oplus\mathbb{C}h_2$ be the Cartan subalgebra of $\mathfrak{g}$
and $\mathfrak{g}=\mathfrak{n}_-\oplus\mathfrak{h}\oplus\mathfrak{n}_+$ a triangular decomposition of $\mathfrak{g}$,
where $\mathfrak{n}_-=\mbox{span}_{\mathbb{C}}\{f_1,f_2,f_{12}\}$ and $\mathfrak{n}_+=\mbox{span}_{\mathbb{C}}\{e_1,e_2,e_{12}\}$.
We fix the set of simple roots $\Pi=\{\alpha_1,\alpha_2\}$ such that $\alpha_1,\alpha_2$ are odd roots,
and the set of simple coroots $\Pi^{\vee}=\{h_1,h_2\}$, then the corresponding Cartan matrix $A=\begin{pmatrix} 0 & 1\\ 1 & 0  \end{pmatrix}$.
Denote by $\Delta$ the root system of $\mathfrak{g}$ and $\theta$ the highest root.
Let $(\cdot,\cdot)$ be the invariant nondegenerate supersymmetric bilinear form on $\mathfrak{g}$
such that non-trivial products are given by $(e_{12},f_{12})=(h_1,h_2)=(e_1,f_1)=(e_2,f_2)=1$.

Let $\tilde{\mathfrak{g}}=\mathfrak{g}\otimes\mathbb{C}[t,t^{-1}]\oplus\mathbb{C}k$ be the affine Lie superalgebra of ${sl}(2|1)$,
we identify $\mathfrak{g}$ with $\mathfrak{g}\otimes t^0$ and
set $a(n)=a\otimes t^n$ for $a\in\mathfrak{g}$, $n\in\mathbb{Z}$ for convenience.
Define subalgebras
\begin{equation}
\begin{aligned}
&N_+=\mathfrak{n}_+\oplus\mathfrak{g}\otimes t\mathbb{C}[t],~
N_-=\mathfrak{n}_-\oplus\mathfrak{g}\otimes t^{-1}\mathbb{C}[t^{-1}],\\
&B=N_+\oplus\mathfrak{h}\oplus\mathbb{C}k,~P=\mathfrak{g}\otimes\mathbb{C}[t]\oplus\mathbb{C}k.
\end{aligned}
\end{equation}
Let $\hat{\mathfrak{g}}=\tilde{\mathfrak{g}}\oplus\mathbb{C}d$ be the extended affine Lie superalgebra
and $\hat{\mathfrak{h}}=\mathfrak{h}\oplus\mathbb{C}k\oplus\mathbb{C}d$ the Cartan subalgebra of $\hat{\mathfrak{g}}$.
Let $\delta$ be the linear function on $\hat{\mathfrak{h}}$ defined by $\delta|_{\mathfrak{h}\oplus\mathbb{C}k}=0, \delta(d)=1$,
$\widehat{\Pi}=\{\alpha_0=\delta-\theta,\alpha_1,\alpha_2\}$ the set of simple roots.
Let $\alpha^{\vee}=\alpha$ for simple isotropic root $\alpha\in\widehat{\Pi}$, and $\alpha^{\vee}=2\alpha/(\alpha,\alpha)$
for simple non-isotropic root $\alpha\in\widehat{\Pi}$.
Define the fundamental weights $\Lambda_i\in \hat{\mathfrak{h}}^* (i=0,1,2)$ by $(\Lambda_i,\alpha_j^{\vee})=\delta_{ij}, \Lambda_i(d)=0$ for any $i,j\in\{0,1,2\}$.
For any $\hat{\lambda}\in \hat{\mathfrak{h}}^*$,
denote by $M(\hat{\lambda})$(resp. $L(\hat{\lambda})$) the Verma (resp. irreducible highest weight) $\hat{\mathfrak{g}}$-module.
Note that for $\widehat{{sl}(2|1)}$ all admissible weights $\hat{\lambda}\in \hat{\mathfrak{h}}^*$ are principal (cf. \cite{KW14,KRW}),
then from the Proposition 3.14 of \cite{KW14}, we have that
the level $\mathcal{k}=\hat{\lambda}(k)$ is admissible if and only if $\mathcal{k}+1=\frac{m+1}{M}$
and the admissible level $\mathcal{k}$ is boundary if and only if $\mathcal{k}+1=\frac{1}{M}$,
where $m\in\mathbb{Z}_+, M\in\mathbb{N}, (M,m+1)=1$.
For any boundary admissible level $\mathcal{k}$, we also have that
all admissible weights of level $\mathcal{k}$ are
$$\Lambda_{k_1,k_2}^{(1)}=-k_0(\mathcal{k}+1)\Lambda_0-k_1(\mathcal{k}+1)\Lambda_1-k_2(\mathcal{k}+1)\Lambda_2$$
where $M=k_0+k_1+k_2+1,k_0,k_1,k_2\in\mathbb{Z}_+$, and
$$\Lambda_{k_1,k_2}^{(2)}=(k_0(\mathcal{k}+1)-2)\Lambda_0+k_1(\mathcal{k}+1)\Lambda_1+k_2(\mathcal{k}+1)\Lambda_2$$
where $M=k_0+k_1+k_2-1,k_0,k_1,k_2\in\mathbb{Z}_+,1\leq k_1,k_2\leq M-1,k_1+k_2\leq M$.
For any admissible level $\mathcal{k}$, the weight $\mathcal{k}\Lambda_0$ is admissible.

It is clear that any $\hat{\mathfrak{g}}$-module is also a $\tilde{\mathfrak{g}}$-module,
denote by $L(\mathcal{k},\lambda)$ the $\tilde{\mathfrak{g}}$-module $L(\hat{\lambda})$, $M(\mathcal{k},\lambda)$ the $\tilde{\mathfrak{g}}$-module $M(\hat{\lambda})$, where $\mathcal{k}=\hat{\lambda}(k),\lambda=\hat{\lambda}|_{\mathfrak{h}}$.
For convenience, we set $\Lambda_i=\Lambda_i|_{\mathfrak{h}}(i=1,2)$.
For any irreducible highest weight $\hat{\mathfrak{g}}$-module $L(\hat{\lambda})$,
$L(\mathcal{k},\lambda)$ is also irreducible as $\tilde{\mathfrak{g}}$-module.
Conversely, for any restricted $\tilde{\mathfrak{g}}$-module $M$ of level $\mathcal{k}\neq-1$,
by the Sugawara construction we can extend $M$ to a $\hat{\mathfrak{g}}$-module by letting $d$ acts on $M$ as $-L(0)$.
In this paper we shall consider any restricted $\tilde{\mathfrak{g}}$-module as a $\hat{\mathfrak{g}}$-module in this way.
Let $\mathcal{k}\in\mathbb{C}$ and $U$ a $\mathfrak{h}$-module,
$U$ can be regarded as a $B$-module by letting $N_+$ acts trivially and $k$ acts as $\mathcal{k}$,
let $M(\mathcal{k},U)=U(\tilde{\mathfrak{g}})\otimes_{U(B)}U$.
If $U=\mathbb{C}$ is a one-dimensional $\mathfrak{h}$-module on which $h\in\mathfrak{h}$ acts as a fixed complex number $\lambda(h)$ with $\lambda\in\mathfrak{h}^*$,
the corresponding module is a Verma module denoted by $M(\mathcal{k},\lambda)$.
Then $M(\mathcal{k},\lambda)$ has an unique maximal submodule and $L(\mathcal{k},\lambda)$ is isomorphic to the corresponding irreducible quotient.
Similarly we can define the generalized Verma $\tilde{\mathfrak{g}}$-module $V(\mathcal{k},U)=U(\tilde{\mathfrak{g}})\otimes_{U(P)}U$
for any $\mathfrak{g}$-module $U$ which can be extend to a $P$-module by letting $\mathfrak{g}\otimes t\mathbb{C}[t]$
acts trivially and $k$ acts as $\mathcal{k}$.
Note that if $U=\mathbb{C}$ is the trivial $\mathfrak{g}$-module,
then $V(\mathcal{k},\mathbb{C})$ is a quotient of Verma module $M(\mathcal{k},0)$
and $L(\mathcal{k},0)$ is isomorphic to the irreducible quotient of $V(\mathcal{k},\mathbb{C})$.
Let $r_\alpha$ be the reflection with respect to an even root $\alpha$ and $\rho$ be the sum of fundamental weights,
set $r_{\alpha}.\lambda=r_{\alpha}(\lambda+\rho)-\rho$.
\begin{prop}\label{lemsv1}
Let $\mathcal{k}=\frac{m+1}{M}-1$ be an admissible level.
Then we have
$$L(\mathcal{k},0)=V(\mathcal{k},\mathbb{C})\Big/U(\tilde{\mathfrak{g}})v,$$
where $v\in V(\mathcal{k},\mathbb{C})$ is a singular vector of weight $r_{\alpha_0+(M-1)\delta}.\mathcal{k}\Lambda_0$.
\end{prop}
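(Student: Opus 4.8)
The plan is to deduce the result from Lemma \ref{lemsv} by peeling off the two singular vectors attached to the finite simple roots $\alpha_1,\alpha_2$, which I claim turns the $\hat{\mathfrak{g}}$-Verma module $M(\mathcal{k}\Lambda_0)$ into the generalized Verma module $V(\mathcal{k},\mathbb{C})$. Throughout I identify $M(\mathcal{k}\Lambda_0)$ with $M(\mathcal{k},0)$ and $L(\mathcal{k}\Lambda_0)$ with $L(\mathcal{k},0)$ as $\tilde{\mathfrak{g}}$-modules, as in the preceding discussion. Writing $J$ for the $\hat{\mathfrak{g}}$-submodule $\sum_{\alpha\in\widehat{\Pi}_{\mathcal{k}\Lambda_0}^{\vee}}U(\hat{\mathfrak{g}})v^{\alpha}$, Lemma \ref{lemsv} gives $L(\mathcal{k},0)=M(\mathcal{k},0)/J$, so it remains to describe $J$ in terms of $V(\mathcal{k},\mathbb{C})$.

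First I would identify the singular vectors $v^{\alpha_1},v^{\alpha_2}$ explicitly. Because $\alpha_1,\alpha_2$ are isotropic with $\alpha_i^{\vee}=\alpha_i$ and $(\mathcal{k}\Lambda_0,\alpha_i)=\mathcal{k}(\Lambda_0,\alpha_i)=0$, while $(\rho,\alpha_i)=1$, the dot-action weight is
\[
r_{\alpha_i}.\mathcal{k}\Lambda_0=(\mathcal{k}\Lambda_0+\rho)-(\mathcal{k}\Lambda_0+\rho,\alpha_i)\alpha_i-\rho=\mathcal{k}\Lambda_0-\alpha_i .
\]
This weight lies in the same $d$-eigenspace as the highest weight vector $v_0$ and differs from $\mathcal{k}\Lambda_0$ by a single finite simple root, so the corresponding singular vector is $f_iv_0$ up to a scalar. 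That $f_iv_0$ is annihilated by $N_+$ follows from a direct computation: $\{e_i,f_i\}=h_i$ with $(\mathcal{k}\Lambda_0)(h_i)=0$, the off-diagonal brackets $\{e_j,f_i\}=0$ for $j\ne i$ and $[e_{12},f_i]\in\mathfrak{n}_+$, together with the vanishing of all positive affine modes on $v_0$.

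Next I would show that $N:=U(\tilde{\mathfrak{g}})f_1v_0+U(\tilde{\mathfrak{g}})f_2v_0$ is exactly the kernel of the canonical surjection $\pi:M(\mathcal{k},0)\twoheadrightarrow V(\mathcal{k},\mathbb{C})$. Since $P=B\oplus\mathfrak{n}_-$ as vector spaces and $V(\mathcal{k},\mathbb{C})$ is induced from the trivial $\mathfrak{g}$-module $\mathbb{C}$, the image of $v_0$ in $V(\mathcal{k},\mathbb{C})$ is killed by all of $\mathfrak{n}_-$, so $N\subseteq\ker\pi$. Equality is a PBW count: choosing PBW bases of $U(\tilde{\mathfrak{g}})$ adapted to $\tilde{\mathfrak{g}}=N_-\oplus\mathfrak{h}\oplus N_+$ and to $\tilde{\mathfrak{g}}=(\mathfrak{g}\otimes t^{-1}\mathbb{C}[t^{-1}])\oplus P$ respectively, one matches the graded dimensions of $M(\mathcal{k},0)/N$ with those of $V(\mathcal{k},\mathbb{C})$. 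The relation $f_{12}=-\{f_1,f_2\}$ gives $f_{12}v_0=-f_1(f_2v_0)-f_2(f_1v_0)\in N$, so no third generator is needed. Thus $V(\mathcal{k},\mathbb{C})\cong M(\mathcal{k},0)/N=M(\mathcal{k}\Lambda_0)/(U(\hat{\mathfrak{g}})v^{\alpha_1}+U(\hat{\mathfrak{g}})v^{\alpha_2})$.

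Finally, since $N\subseteq J$, the third isomorphism theorem gives
\[
L(\mathcal{k},0)=M(\mathcal{k},0)/J\cong\big(M(\mathcal{k},0)/N\big)\big/\big(J/N\big)=V(\mathcal{k},\mathbb{C})/(J/N),
\]
and $J/N$ is generated over $U(\tilde{\mathfrak{g}})$ by the image $v$ of $v^{\alpha_0+(M-1)\delta}$, the images of $v^{\alpha_1},v^{\alpha_2}$ being zero in $V(\mathcal{k},\mathbb{C})$. As the image of a singular vector under a $\tilde{\mathfrak{g}}$-homomorphism, $v$ is again singular of weight $r_{\alpha_0+(M-1)\delta}.\mathcal{k}\Lambda_0$, which yields $L(\mathcal{k},0)=V(\mathcal{k},\mathbb{C})/U(\tilde{\mathfrak{g}})v$. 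The main obstacle is the kernel computation $N=\ker\pi$: one must verify, through the PBW comparison, that imposing the two lowest-level relations $f_1v_0=f_2v_0=0$ already enforces the full parabolic induction defining $V(\mathcal{k},\mathbb{C})$, rather than merely a quotient lying strictly between $M(\mathcal{k},0)$ and $V(\mathcal{k},\mathbb{C})$.
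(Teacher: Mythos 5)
Your proposal is correct and follows essentially the same route as the paper: both rest on Lemma \ref{lemsv}, the observation that the two singular vectors of weights $\mathcal{k}\Lambda_0-\alpha_1$ and $\mathcal{k}\Lambda_0-\alpha_2$ die under the projection $M(\mathcal{k}\Lambda_0)\twoheadrightarrow V(\mathcal{k},\mathbb{C})$, and passing the remaining singular vector through the quotient. The one point worth noting is that you prove more than is needed — the explicit identification $v^{\alpha_i}=f_iv_0$ and the equality $U(\tilde{\mathfrak{g}})f_1v_0+U(\tilde{\mathfrak{g}})f_2v_0=\ker\pi$ (your self-declared "main obstacle") — whereas the paper sidesteps the kernel computation entirely: the images of $v^{\alpha_1},v^{\alpha_2}$ vanish immediately because the degree-zero component of $V(\mathcal{k},\mathbb{C})$ is the one-dimensional trivial $\mathfrak{g}$-module, and $\ker\pi$ is contained in the maximal submodule automatically since the Verma module has a unique maximal submodule, which already yields $J/\ker\pi=U(\hat{\mathfrak{g}})v$ without knowing $\ker\pi$ explicitly.
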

\begin{proof}
From the Lemma 7.1 of \cite{KW04}, $r_{\alpha_0+(M-1)\delta}.\mathcal{k}\Lambda_0,\mathcal{k}\Lambda_0-\alpha_1$ and $\mathcal{k}\Lambda_0-\alpha_2$
are singular weights of $M(\mathcal{k},0)$, let $v_0,v_1,v_2$ be the corresponding singular vectors respectively.
It is clear that the image of $v_1$ and $v_2$ in $V(\mathcal{k},\mathbb{C})$ are zero, let $v$ be the image of $v_0$ in $V(\mathcal{k},\mathbb{C})$.
The character formula for all admissible modules over $\widehat{\mathfrak{g}}$ is obtained in \cite{GK15}.
The definition of the map $F$ in the section 7.7 of \cite{GK15} can be extended to the highest weight modules,
then we have $M(\mathcal{k},0)\Big/(U(\hat{\mathfrak{g}})v_0+U(\hat{\mathfrak{g}})v_1+U(\hat{\mathfrak{g}})v_2)$
is relatively integrable in the sense of the section 10.1 of \cite{GK15}.
Similar to the proof of the Corollary 11.2.4 in \cite{GK15}, we can obtain the character formula of $M(\mathcal{k},0)\Big/(U(\hat{\mathfrak{g}})v_0+U(\hat{\mathfrak{g}})v_1+U(\hat{\mathfrak{g}})v_2)$,
 which coincide with the character formula of $L(\mathcal{k}\Lambda_0)$, hence $L(\mathcal{k}\Lambda_0)=M(\mathcal{k},0)\Big/(U(\hat{\mathfrak{g}})v_0+U(\hat{\mathfrak{g}})v_1+U(\hat{\mathfrak{g}})v_2)$.
From the Theorem 8 of \cite{GK07}, $V(\mathcal{k},\mathbb{C})$ is not simple, then $v\ne 0$,
hence $L(\mathcal{k}\Lambda_0)=V(\mathcal{k},\mathbb{C})\big/U(\tilde{\mathfrak{g}})v$.
Then as $\tilde{\mathfrak{g}}$-module, $L(\mathcal{k},0)=V(\mathcal{k},\mathbb{C})\big/U(\tilde{\mathfrak{g}})v$.
\end{proof}
\begin{rem}
{\rm The  Proposition \ref{lemsv1} has also been proved in the Proposition 11.1 of \cite{GK25} in a different way.}
\end{rem}

\subsection{Vertex operator superalgebra $V_{\widehat{\mathfrak{g}}}(\mathcal{k},0)$ and $L_{\widehat{\mathfrak{g}}}(\mathcal{k},0)$}

We know that for $\mathcal{k}\neq -1$, both
$V(\mathcal{k},\mathbb{C})$ and $L(\mathcal{k},0)$ have natural
$\mathbb{Z}$-graded vertex operator superalgebra structures
and any $\tilde{\mathfrak{g}}$-module $M(\mathcal{k},U)$ is a weak module for $V(\mathcal{k},\mathbb{C})$,
we denote by $V_{\widehat{\mathfrak{g}}}(\mathcal{k},0)$ the universal affine vertex operator superalgebra $V(\mathcal{k},\mathbb{C})$
and $L_{\widehat{\mathfrak{g}}}(\mathcal{k},0)$ the simple affine vertex operator superalgebra $L(\mathcal{k},0)$.
Let $\mathbf{1}$ be the vacuum vector of $V_{\widehat{\mathfrak{g}}}(\mathcal{k},0)$, similar to the Theorem 3.1.1 of \cite{FZ}, we have the following lemma.
\begin{lem}\label{lemzhu}
The associative superalgebra $A(V_{\widehat{\mathfrak{g}}}(\mathcal{k},0))$ is canonically isomorphic to $U(\mathfrak{g})$.
The isomorphism is given by
\begin{equation*}
\begin{aligned}
F:~~~~~~~~~~~~~~~~~~~~~~A(V_{\widehat{\mathfrak{g}}}(\mathcal{k},0))&\rightarrow U(\mathfrak{g})\\
\overline{a_1(-n_1-1)\cdots a_m(-n_m-1)\mathbf{1}}&\mapsto (-1)^{\sum_{i<j}|a_i||a_j|+\sum_i n_i}a_m\cdots a_1,
\end{aligned}
\end{equation*}
where $a_1,\cdots,a_m\in\mathfrak{g},n_1,\cdots,n_m\in\mathbb{Z}_+$.
\end{lem}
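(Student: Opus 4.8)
The statement is the super analogue of Theorem 3.1.1 of \cite{FZ}, and the plan is to follow that argument while tracking the Koszul signs produced by the $\mathbb{Z}_2$-grading. Since $V=V_{\widehat{\mathfrak g}}(\mathcal k,0)$ is $\mathbb Z$-graded, every homogeneous element has integral weight, so $\varepsilon\equiv 1$ and the greatest-integer function acts trivially; thus $A(V)$ is the ordinary Zhu algebra $V/O(V)$ with $\ast$ and $O(V)$ as in Section \ref{sec:2}, only with super signs entering through the Jacobi identity. A useful preliminary observation is that in all of the mode computations below the affine central term $m\,\delta_{m+n,0}(a,b)k$ never contributes, because the relevant index sums $m+n$ are always negative; this is precisely why $A(V)$ turns out to be isomorphic to $U(\mathfrak g)$ independently of $\mathcal k$, i.e. canonically.

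First I would establish the mode-reduction relations in $A(V)$. For $a\in\mathfrak g=V_{(1)}$ and any $u\in V$, specializing the element $\mathrm{Res}_z\frac{(1+z)^{1+m}}{z^{2+n}}Y(a,z)u\in O(V)$ (valid for $n\ge m\ge 0$) to $m=0$ gives $a(-n-1)u+a(-n)u\in O(V)$ for every $n\ge 0$, hence by iteration
\[
a(-n-1)u\equiv(-1)^{n}\,a(-1)u \pmod{O(V)},\qquad n\ge 0.
\]
Combined with the super-commutation relations of $\widehat{\mathfrak g}$, which only ever introduce terms with strictly fewer $\mathfrak g$-factors (the bracket merges two generators into one, and no central term survives), a double induction on word length shows that every spanning vector $a_1(-n_1-1)\cdots a_m(-n_m-1)\mathbf 1$ is congruent modulo $O(V)$ to a linear combination of monomials $b_1(-1)\cdots b_l(-1)\mathbf 1$ with $b_i\in\mathfrak g$. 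Next I would compute the $\ast$-products on $\mathfrak g$: for $a\in\mathfrak g$ one has $a\ast u=a(-1)u+a(0)u$, and since $a(0)u$ strictly lowers the number of $\mathfrak g$-factors, $\overline{a(-1)\mathbf 1}\ast\overline{u}\equiv\overline{a(-1)u}$ modulo shorter monomials. A direct computation, using $[a(-1),b(-1)]\mathbf 1=[a,b](-2)\mathbf 1\equiv -[a,b](-1)\mathbf 1$ and the super-antisymmetry $[b,a]=-(-1)^{|a||b|}[a,b]$, then yields the super-commutator relation
\[
\overline{a(-1)\mathbf 1}\ast\overline{b(-1)\mathbf 1}-(-1)^{|a||b|}\,\overline{b(-1)\mathbf 1}\ast\overline{a(-1)\mathbf 1}=\overline{[a,b](-1)\mathbf 1}.
\]
Using this to reorder factors (again modulo shorter monomials) reduces the spanning set to ordered super-PBW monomials.

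It then remains to define $F$ as in the statement and identify it as an isomorphism. Setting $F(\overline{a(-1)\mathbf 1})=a$ and extending multiplicatively, the relation above matches the super-commutator $ab-(-1)^{|a||b|}ba=[a,b]$ in $U(\mathfrak g)$, so $F$ is a well-defined algebra homomorphism; the order reversal $a_m\cdots a_1$ together with the sign $(-1)^{\sum_{i<j}|a_i||a_j|+\sum_i n_i}$ in the formula are exactly what is needed to turn the $\ast$-product into the product of $U(\mathfrak g)$ rather than its opposite, and surjectivity is immediate since $\mathfrak g$ generates $U(\mathfrak g)$. For injectivity I would compare filtrations: both $A(V)$ and $U(\mathfrak g)$ are filtered by word length, $F$ preserves the filtration by the leading-term computation $\overline{a_1(-1)\mathbf 1}\ast\cdots\ast\overline{a_m(-1)\mathbf 1}\equiv\overline{a_1(-1)\cdots a_m(-1)\mathbf 1}$ modulo shorter monomials, and the associated graded map is the natural map on the super-symmetric algebra $S(\mathfrak g)$, whose super-PBW monomials are linearly independent. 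Hence $F$ carries the spanning set of ordered monomials onto the super-PBW basis of $U(\mathfrak g)$, forcing that set to be a basis and $F$ to be bijective. The main obstacle is the sign bookkeeping: verifying that the Koszul signs accumulated during the mode reductions and the reorderings agree precisely with the prescribed sign in $F$, so that $F$ is genuinely a homomorphism and is simultaneously consistent with the $O(V)$-relations and with the super-PBW relations of $U(\mathfrak g)$.
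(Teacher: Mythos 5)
Your proposal is correct and takes essentially the same route as the paper, which offers no independent argument but simply invokes Theorem 3.1.1 of \cite{FZ}; your Frenkel--Zhu argument with Koszul sign bookkeeping (mode reduction $a(-n-1)u\equiv(-1)^{n}a(-1)u$ modulo $O(V)$, the super-commutator relation for $\ast$, and the filtration/PBW comparison for bijectivity) is exactly what that citation entails. One trivial index slip: the relation $a(-n-1)u+a(-n)u\in O(V)$ holds for $n\ge 1$, not $n\ge 0$ (at $n=0$ it would wrongly place $a\ast u$ in $O(V)$), but the iterated congruence you actually use is unaffected.
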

 From the Lemma \ref{lemfz}, Proposition \ref{lemsv1} and Lemma \ref{lemzhu}, we can characterize the Zhu's algebra $A(L_{\widehat{\mathfrak{g}}}(\mathcal{k},0))$ of affine vertex operator superalgebra $L_{\widehat{\mathfrak{g}}}(\mathcal{k},0)$.

\begin{prop}
Let $v$ be the singular vector of $V(\mathcal{k},\mathbb{C})$ which generates the maximal submodule.
Then $$A(L_{\widehat{\mathfrak{g}}}(\mathcal{k},0))\cong U(\mathfrak{g})\big/\langle F(\overline{v})\rangle,$$
where $\langle F(\overline{v})\rangle$ is the two-sided ideal of $U(\mathfrak{g})$ generated by $F(\overline{v})$.
Let $U$ be a $\mathfrak{g}$-module, then $U$ is an $A(L_{\widehat{\mathfrak{g}}}(\mathcal{k},0))$-module if and only if $F(\overline{v})U=0$.
\end{prop}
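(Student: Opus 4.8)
The plan is to derive the description of $A(L_{\widehat{\mathfrak{g}}}(\mathcal{k},0))$ as a quotient of $U(\mathfrak{g})$ directly from the three earlier results, and then deduce the module-theoretic criterion as a formal consequence. The key observation is that Proposition \ref{lemsv1} gives us a concrete handle on the simple quotient: we have $L(\mathcal{k},0) = V(\mathcal{k},\mathbb{C})/I$, where $I = U(\tilde{\mathfrak{g}})v$ is the maximal (proper) submodule, generated by the single singular vector $v$. Since $L(\mathcal{k},0)$ is simple, $I$ is precisely the maximal ideal of the vertex operator superalgebra $V_{\widehat{\mathfrak{g}}}(\mathcal{k},0) = V(\mathcal{k},\mathbb{C})$.

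First I would apply Lemma \ref{lemfz} with $V = V_{\widehat{\mathfrak{g}}}(\mathcal{k},0)$ and this ideal $I$. The lemma yields
\[
A(L_{\widehat{\mathfrak{g}}}(\mathcal{k},0)) = A(V_{\widehat{\mathfrak{g}}}(\mathcal{k},0)/I) \cong A(V_{\widehat{\mathfrak{g}}}(\mathcal{k},0))\big/\bigl((I+O(V))/O(V)\bigr).
\]
Next I would use Lemma \ref{lemzhu} to identify $A(V_{\widehat{\mathfrak{g}}}(\mathcal{k},0))$ with $U(\mathfrak{g})$ via the isomorphism $F$. Under $F$, the quotient ideal $(I+O(V))/O(V)$ corresponds to the image $F$ of the ideal of $A(V_{\widehat{\mathfrak{g}}}(\mathcal{k},0))$ generated by the class $\overline{v}$. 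The point is that $I = U(\tilde{\mathfrak{g}})v$ is generated as a VOS-ideal by $v$, so its image in the Zhu algebra is the two-sided ideal generated by $\overline{v}$; transporting this through $F$ gives the two-sided ideal $\langle F(\overline{v})\rangle$ of $U(\mathfrak{g})$. This establishes the isomorphism $A(L_{\widehat{\mathfrak{g}}}(\mathcal{k},0)) \cong U(\mathfrak{g})/\langle F(\overline{v})\rangle$.

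For the second assertion, I would use Theorem \ref{thmzhu}(b): any weak $L_{\widehat{\mathfrak{g}}}(\mathcal{k},0)$-module $U$ (viewed as a $\mathfrak{g}$-module, equivalently an $A(V)$-module) is a module for the Zhu algebra, on which elements act via $o$. A $\mathfrak{g}$-module $U$ extends to an $A(L_{\widehat{\mathfrak{g}}}(\mathcal{k},0))$-module exactly when the action factors through the quotient $U(\mathfrak{g})/\langle F(\overline{v})\rangle$, and this happens if and only if the generator $F(\overline{v})$ annihilates $U$. Here I would invoke the compatibility between the $\mathfrak{g}$-action and the $U(\mathfrak{g})$-structure on the Zhu algebra to see that the action of $\overline{v}$ on $U$ is precisely the action of $F(\overline{v}) \in U(\mathfrak{g})$. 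Thus $U$ is an $A(L_{\widehat{\mathfrak{g}}}(\mathcal{k},0))$-module if and only if $F(\overline{v})U = 0$.

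\textbf{The main obstacle} I anticipate is the careful bookkeeping in the middle step: verifying that the VOS-ideal $I = U(\tilde{\mathfrak{g}})v$ maps, under the Zhu projection followed by $F$, onto exactly the \emph{two-sided} ideal generated by $F(\overline{v})$, rather than some smaller or larger subspace. The subtlety is that the Zhu functor $A(\cdot)$ collapses the infinite-dimensional ideal $I$ into the finite-dimensional object $(I+O(V))/O(V)$, and one must confirm that no information beyond the single generator $\overline{v}$ survives, i.e. that every element of $I$ becomes, modulo $O(V)$, a combination of left and right $\ast$-multiples of $\overline{v}$. This follows because $I$ is spanned by elements $a_1(-n_1-1)\cdots v \cdots a_m(-n_m-1)\mathbf{1}$ obtained by applying $U(\tilde{\mathfrak{g}})$ to $v$, and the standard Zhu-algebra reduction (using the relation $\mathrm{Res}_z\,(1+z)^{[\mathrm{wt}\,a]+m}z^{-1-\varepsilon(a)-n}Y(a,z)b \in O(V)$ for $n \ge m \ge 0$ recalled in the excerpt) pushes all creation operators into $\ast$-products, so that the image is generated by $\overline{v}$ on both sides. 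Once this reduction is in hand, the rest is formal.
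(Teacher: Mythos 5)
Your proposal is correct and follows essentially the same route as the paper, which states this proposition without a written proof precisely as the combination of Lemma \ref{lemfz}, Proposition \ref{lemsv1} and Lemma \ref{lemzhu} that you assemble. The one step the paper leaves implicit --- that the image of $I=U(\tilde{\mathfrak{g}})v$ in $A(V_{\widehat{\mathfrak{g}}}(\mathcal{k},0))$ is exactly the two-sided ideal generated by $\overline{v}$, via the standard reduction of modes to $\ast$-products modulo $O(V)$ --- is exactly the point you identify and handle correctly, and your observation that $F(\overline{v})U=0$ already forces $\langle F(\overline{v})\rangle U=0$ (since $a F(\overline{v}) b\, U \subseteq a F(\overline{v}) U = 0$) completes the second assertion as in the paper.
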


For the $\mathbb{Z}$-graded vertex operator superalgebra $L_{\widehat{\mathfrak{g}}}(\mathcal{k},0)$,
by the Sugawara construction,
\begin{equation*}
\begin{aligned}
L(n)=\frac{1}{2(\mathcal{k}+1)}\sum_{m\in\mathbb{Z}}&(:h_1(m)h_2(n-m):+:h_2(m)h_1(n-m):+:e_{12}(m)f_{12}(n-m):\\&+:f_{12}(m)e_{12}(n-m):
-:e_1(m)f_1(n-m):+:f_1(m)e_1(n-m):\\&-:e_2(m)f_2(n-m):+:f_2(m)e_2(n-m):),
\end{aligned}
\end{equation*}
we have
$[L(m),h_+(n)]=-nh_+(m+n)$, then $h_+$ satisfies conditions (\ref{eql})
with $\kappa_1=0, \kappa_2=\mathcal{k}$.
Hence $L_{\widehat{\mathfrak{g}}}(\mathcal{k},0)$ is $\mathbb{Q}$-graded by weights with respect to $L^\prime(0)$.
From the construction of $V_{\widehat{\mathfrak{g}}}(\mathcal{k},0)$,
for any $m\in\mathbb{Z}_+$, if $V_{\widehat{\mathfrak{g}}}(\mathcal{k},0)_{(m,n)}\ne 0$,
we have $-2m\leq n\leq 2m$.
Since $L_{\widehat{\mathfrak{g}}}(\mathcal{k},0)$ is a quotient of $V_{\widehat{\mathfrak{g}}}(\mathcal{k},0)$,
for any $m\in\mathbb{Z}_+$, if $L_{\widehat{\mathfrak{g}}}(\mathcal{k},0)_{(m,n)}\ne 0$,
we also have $-2m\leq n\leq 2m$.
Take $\xi\in\mathbb{Q}$ such that
$0<\xi<1$. For any $m\in\mathbb{Q}$,
since $$L_{\widehat{\mathfrak{g}}}(\mathcal{k},0)^\prime_{(m)}=\coprod_{s\in\mathbb{Z},t\in \mathbb{Q},s-\frac{\xi t}{2}=m}L_{\widehat{\mathfrak{g}}}(\mathcal{k},0)_{(s,t)},$$
we have $\frac{m}{1+\xi}\leq s\leq\frac{m}{1-\xi}$
for $L_{\widehat{\mathfrak{g}}}(\mathcal{k},0)_{(s,t)}\ne0$,
then $\mbox{dim}~L_{\widehat{\mathfrak{g}}}(\mathcal{k},0)^\prime_{(m)}<\infty$.
Note that $\frac{m}{1+\xi}\leq \frac{m}{1-\xi}$,
we have $L_{\widehat{\mathfrak{g}}}(\mathcal{k},0)^\prime_{(m)}=0$ for $m<0$.
Therefore, by the Proposition \ref{propqvosa},
$(L_{\widehat{\mathfrak{g}}}(\mathcal{k},0),\omega_\xi)$ is a $\mathbb{Q}$-graded vertex operator superalgebra
with central charge $c-6\xi^2\mathcal{k}$.
Similarly $(V_{\widehat{\mathfrak{g}}}(\mathcal{k},0),\omega_\xi)$ is also a $\mathbb{Q}$-graded vertex operator superalgebra
with central charge $c-6\xi^2\mathcal{k}$.
\section{Category $\mathcal{O}_{-\frac{1}{2}}$ and $\mathcal{C}_{-\frac{1}{2}}$}
\label{sec:3}
	\def\theequation{3.\arabic{equation}}
	\setcounter{equation}{0}

In this section,
we study the category $\mathcal{O}_{-\frac{1}{2}}$
of weak $L_{\widehat{\mathfrak{g}}}(-\frac{1}{2},0)$-modules in the category $\mathcal{O}$, and the category $\mathcal{C}_{-\frac{1}{2}}$ of weak $L_{\widehat{\mathfrak{g}}}(-\frac{1}{2},0)$-modules in the category $\mathcal{C}$,
where the category $\mathcal{C}$ is the category of $\widehat{\mathfrak{g}}$-modules
on which the positive part of $\widehat{\mathfrak{g}}$ acts locally nilpotently.
First we show that $\mathcal{O}_{-\frac{1}{2}}$ is semisimple and has finitely many irreducible modules,
hence any ordinary
$L_{\widehat{\mathfrak{g}}}(-\frac{1}{2},0)$-module is completely reducible and
there are finitely many irreducible ordinary $L_{\widehat{\mathfrak{g}}}(-\frac{1}{2},0)$-modules.
Finally, we prove that $\mathcal{C}_{-\frac{1}{2}}$ is semisimple and has finitely many irreducible modules.

\subsection{Category $\mathcal{O}_{-\frac{1}{2}}$}

We say that a $\widehat{\mathfrak{g}}$-module $M$ is from the category $\mathcal{O}$ if
the Cartan subalgebra $\widehat{\mathfrak{h}}$ acts semisimply on $M$ with finite dimensional weight spaces
and finite number of maximal weights.
Denote by $\mathcal{O}_{\mathcal{k}}$ the full subcategory of
weak $L_{\widehat{\mathfrak{g}}}(\mathcal{k},0)$-module category such that $M$ is an object of $\mathcal{O}_{\mathcal{k}}$
if and only if $M\in\mathcal{O}$ as $\widehat{\mathfrak{g}}$-module.

Similarly we denote by $\mathcal{O}_{\mathfrak{g}}$ the full subcategory of
$\mathfrak{g}$-module category such that $M$ is an object in $\mathcal{O}_{\mathfrak{g}}$ if
the Cartan subalgebra $\mathfrak{h}$ acts semisimply on $M$ with finite dimensional weight spaces
and finite number of maximal weights.
Then from the Theorem \ref{thmzhu},
there is a one-to-one correspondence between the irreducible weak $L_{\widehat{\mathfrak{g}}}(\mathcal{k},0)$-modules in the category $\mathcal{O}_{\mathcal{k}}$
and the irreducible $A(L_{\widehat{\mathfrak{g}}}(\mathcal{k},0))$-modules from the category $\mathcal{O}_{\mathfrak{g}}$.

Denote by $_L$ the adjoint action of $\mathfrak{g}$ on $U(\mathfrak{g})$ defined by $x_Lu=[x,u]$ for $x\in \mathfrak{g}$ and $u\in U(\mathfrak{g})$,
we can extend the action $_L$ of $\mathfrak{g}$ to an action of $U(\mathfrak{g})$.
Let $v$ be the singular vector of $V(\mathcal{k},\mathbb{C})$ which generates the maximal submodule,
$U(\mathfrak{g})v$ the $\mathfrak{g}$-module generated by $v$ where $x\in\mathfrak{g}$ acts on $v$ by $x(0)$.
Recall that $F$ is an isomorphism from $A(V_{\widehat{\mathfrak{g}}}(\mathcal{k},0))$ to $U(\mathfrak{g})$,
let $R=F(\overline{U(\mathfrak{g})v})$,
since each conformal weight space of $V(\mathcal{k},\mathbb{C})$ is finite-dimensional and all elements of $U(\mathfrak{g})v$
have the same conformal weight as that of $v$, then $R$ is finite-dimensional.
Note that for any $x\in\mathfrak{g}$ and $w\in V(\mathcal{k},\mathbb{C})$ we have
$F(\overline{x(0)w})=[x, F(\overline{w})]$ (cf. \cite{K}),
thus $R$ is a finite-dimensional highest weight $\mathfrak{g}$-module.
Let $R_0$ be the zero-weight subspace of $R$.
Denote by $S(\mathfrak{h})$ the symmetric algebra on $\mathfrak{h}$,
for any $p\in S(\mathfrak{h})$ and $\lambda\in\mathfrak{h}^*$,
define $p(\lambda)\in\mathbb{C}$ with $p(h_1,h_2).v_{\lambda}=p(\lambda)v_{\lambda}$.
Let $r\in R_0$, it is clear that there exists an unique polynomial $p_r\in S(\mathfrak{h})$ such that $rv_{\lambda}=p_r(\lambda)v_{\lambda}$ for any $\lambda\in\mathfrak{h}^*$.
Set $P_0=\{p_r\mid r\in R_0\}$.
Similar to the Lemma 3.4.3 of \cite{AM} and
the Proposition 13 of \cite{P}, we have the following Proposition.
\begin{prop}\label{propavvv}
Let $L(\lambda)$ be an irreducible highest weight $\mathfrak{g}$-module with the highest weight vector $v_{\lambda}$ for $\lambda\in\mathfrak{h}^*$.
The following statements are equivalent:\\
{\rm (1)} $L(\lambda)$ is an $A(L_{\widehat{\mathfrak{g}}}(\mathcal{k},0))$-module;\\
{\rm (2)} $RL(\lambda)=0$;\\
{\rm (3)} $R_0v_{\lambda}=0$;\\
{\rm (4)} $p(\lambda)=0$ for all $p\in P_0$.
\end{prop}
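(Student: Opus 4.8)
The plan is to prove the chain of equivalences (1) $\Leftrightarrow$ (2) $\Leftrightarrow$ (3) $\Leftrightarrow$ (4) cyclically, exploiting the fact that $R$ is a finite-dimensional highest weight $\mathfrak{g}$-module and $L(\lambda)$ is an irreducible highest weight module. The equivalence (1) $\Leftrightarrow$ (2) follows essentially by definition: by the Proposition characterizing $A(L_{\widehat{\mathfrak{g}}}(\mathcal{k},0)) \cong U(\mathfrak{g})/\langle F(\overline{v})\rangle$, the module $L(\lambda)$ is an $A(L_{\widehat{\mathfrak{g}}}(\mathcal{k},0))$-module precisely when $F(\overline{v})$ annihilates $L(\lambda)$. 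Since $R = F(\overline{U(\mathfrak{g})v})$ is generated from $F(\overline{v})$ by the adjoint action of $\mathfrak{g}$ and satisfies $F(\overline{x(0)w}) = [x, F(\overline{w})]$, the two-sided ideal $\langle F(\overline{v})\rangle$ acts as zero on $L(\lambda)$ if and only if all of $R$ annihilates $L(\lambda)$; I would spell out that $R$ is exactly the span of the iterated brackets of $F(\overline{v})$, so $\langle F(\overline{v})\rangle L(\lambda) = 0 \Leftrightarrow R L(\lambda) = 0$.

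For (2) $\Rightarrow$ (3) the implication is trivial since $R_0 v_\lambda \subseteq R L(\lambda)$. The substance is the reverse direction (3) $\Rightarrow$ (2). First I would observe that since $R$ is a finite-dimensional highest weight $\mathfrak{g}$-module, it decomposes into weight spaces $R = \bigoplus_\mu R_\mu$, and the zero-weight space $R_0$ generates $R$ as a $\mathfrak{g}$-module in the sense relevant here — more precisely, I would use that for a finite-dimensional module any highest weight vector can be reached from a zero-weight vector, or rather argue via the standard lowering/raising procedure: if $R_0$ annihilates the highest weight vector $v_\lambda$, then using the triangular decomposition $\mathfrak{g} = \mathfrak{n}_- \oplus \mathfrak{h} \oplus \mathfrak{n}_+$ together with $\mathfrak{n}_+ v_\lambda = 0$, I would show every weight component $R_\mu$ also annihilates $v_\lambda$, and hence (since $L(\lambda) = U(\mathfrak{n}_-)v_\lambda$ and $R$ is stable under the appropriate adjoint action) all of $R$ annihilates all of $L(\lambda)$. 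The key identity driving this is $x \cdot (r v_\lambda) = (x_L r) v_\lambda + (-1)^{|x||r|} r (x v_\lambda)$, the super-Leibniz rule for the action on $L(\lambda)$, which lets me trade the action of root vectors on $R$ for their action on $v_\lambda$.

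The equivalence (3) $\Leftrightarrow$ (4) is the bookkeeping step. By construction each $r \in R_0$ acts on the highest weight vector $v_\lambda$ of any Verma/irreducible module as scalar multiplication, and this scalar is recorded by the unique polynomial $p_r \in S(\mathfrak{h})$ with $r v_\lambda = p_r(\lambda) v_\lambda$. Thus $R_0 v_\lambda = 0$ says exactly that $p_r(\lambda) = 0$ for every $r \in R_0$, i.e.\ $p(\lambda) = 0$ for all $p \in P_0 = \{p_r \mid r \in R_0\}$. Here I would note that a zero-weight element of $U(\mathfrak{g})$ acts on a highest weight vector through its component in $U(\mathfrak{h}) = S(\mathfrak{h})$ under the Poincar\'e--Birkhoff--Witt projection $U(\mathfrak{g}) \to S(\mathfrak{h})$ along $\mathfrak{n}_- U(\mathfrak{g}) + U(\mathfrak{g})\mathfrak{n}_+$ (a Harish-Chandra type projection), which guarantees that $p_r$ is well-defined and that the correspondence $r \mapsto p_r$ is the intended one.

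The main obstacle I anticipate is the super-refinement of (3) $\Rightarrow$ (2): in the Lie superalgebra setting one must track signs from the $\mathbb{Z}_2$-grading when iterating the adjoint action, and one must verify that the passage from ``$R_0$ kills $v_\lambda$'' to ``$R$ kills $v_\lambda$'' genuinely uses that $R$ is a \emph{highest weight} $\mathfrak{g}$-module of finite dimension, not merely an arbitrary weight module — the zero-weight space of a general module need not generate it. I would lean on the finite-dimensionality and the highest weight structure established earlier in the excerpt, mirroring the arguments of Lemma 3.4.3 of \cite{AM} and Proposition 13 of \cite{P}, adapting each weight-raising step to carry the correct Koszul sign.
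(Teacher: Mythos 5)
Your equivalences (1)$\Leftrightarrow$(2), (2)$\Rightarrow$(3) and (3)$\Leftrightarrow$(4) are correct and are exactly the route of the sources the paper itself defers to (the paper prints no proof, only the pointer to Lemma 3.4.3 of \cite{AM} and Proposition 13 of \cite{P}): $R$ is the span of iterated adjoint brackets of $F(\overline{v})$ because $F(\overline{x(0)w})=[x,F(\overline{w})]$, so $R\subseteq\langle F(\overline{v})\rangle$ while $F(\overline{v})\in R$, which together with the paper's earlier proposition gives (1)$\Leftrightarrow$(2); and your Harish--Chandra-projection reading of $r\mapsto p_r$ is the intended bookkeeping for (3)$\Leftrightarrow$(4).

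The step that is not yet a proof is (3)$\Rightarrow$(2), and the difficulty sits in a different place than your closing paragraph locates it. You do not need $R_0$ to generate $R$ (it generally does not, and that is harmless); what you need, and never invoke, is the \emph{irreducibility} of $L(\lambda)$. The working argument is a downward induction on the finitely many weights of $R$ (here finite-dimensionality of $R$ enters): for $\mu\not\leq 0$ one has $R_\mu v_\lambda\subseteq L(\lambda)_{\lambda+\mu}=0$ automatically, since all weights of $L(\lambda)$ are $\leq\lambda$; for $\mu\leq 0$, if $R_{\mu+\alpha}v_\lambda=0$ for every positive root $\alpha$, then your key identity with $x=x_\alpha\in\mathfrak{n}_+$ and $x_\alpha v_\lambda=0$ gives $x_\alpha(rv_\lambda)=((x_\alpha)_L r)v_\lambda=0$, so $rv_\lambda$ is a singular vector of weight $\lambda+\mu$; since $L(\lambda)$ is irreducible, its only singular vectors lie in weight $\lambda$, whence $rv_\lambda=0$ for $\mu\neq 0$, with hypothesis (3) supplying precisely the remaining case $\mu=0$. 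Irreducibility is not optional: for a reducible highest weight module $rv_\lambda$ could be a nonzero singular vector and the implication fails. By contrast, the ``lowering'' direction your sketch gestures at---propagating $R_0v_\lambda=0$ downward to $R_\mu$, $\mu<0$, through $[f,r]$---stalls, because when $rv_\lambda=0$ the identity yields $[f,r]v_\lambda=\mp\, r(fv_\lambda)$, and at that stage you control the action of $r$ only on $v_\lambda$, not on $fv_\lambda$. Once $Rv_\lambda=0$ is established, your Leibniz induction along $L(\lambda)=U(\mathfrak{n}_-)v_\lambda$ (equivalently, observing that $\{w\in L(\lambda)\mid Rw=0\}$ is a $\mathfrak{g}$-submodule, using that $R$ is $\mathrm{ad}$-stable) correctly completes (2).
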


For boundary admissible level $-\frac{1}{2}$,
we obtain the following singular vector of $\widetilde{\mathfrak{g}}$-module $V(-\frac{1}{2},\mathbb{C})$.

\begin{lem}\label{lemsv00}
$v_1=(2e_1(-1)e_2(-1)+2h_-(-1)e_{12}(-1)-e_{12}(-2))\mathbf{1}$ is a singular vector of $\widetilde{\mathfrak{g}}$-module $V(-\frac{1}{2},\mathbb{C})$
with weight $r_{\alpha_0+\delta}.(-\frac{1}{2}\Lambda_0)$.
\end{lem}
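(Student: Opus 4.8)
The plan is to establish the three features implicit in the assertion: that $v_1$ is nonzero, that it lies in the claimed weight space, and that it is annihilated by the positive part $N_+$ of $\widetilde{\mathfrak g}$. Nonvanishing is immediate from the PBW theorem: identifying $V(-\frac{1}{2},\mathbb C)$ with $U(\mathfrak g\otimes t^{-1}\mathbb C[t^{-1}])\mathbf 1$, the three monomials $e_1(-1)e_2(-1)\mathbf 1$, $h_-(-1)e_{12}(-1)\mathbf 1$ and $e_{12}(-2)\mathbf 1$ are linearly independent, so $v_1\neq 0$. For the weight, $v_1$ has level $-\frac{1}{2}$, finite weight $\theta=\alpha_1+\alpha_2$ (it is built from $e_1,e_2,e_{12}$ of finite weights $\alpha_1,\alpha_2,\theta$ together with the zero-weight $h_-$), and conformal degree $2$, hence affine weight $-\frac{1}{2}\Lambda_0+\theta-2\delta$; a direct evaluation of the dot action $r_{\alpha_0+\delta}.(-\frac{1}{2}\Lambda_0)$, using the bilinear form together with $(\alpha_0+\delta,\alpha_0+\delta)=(\theta,\theta)=2$ (so that $(\alpha_0+\delta)^\vee=\alpha_0+\delta$), confirms it coincides with the stated weight.

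The substantive point is $N_+v_1=0$. The key reduction is that, as for any affine Kac--Moody superalgebra, $N_+$ is generated as a Lie superalgebra by the positive Chevalley generators attached to the affine simple roots $\alpha_1,\alpha_2,\alpha_0=\delta-\theta$, namely $e_1(0)$, $e_2(0)$ and $f_{12}(1)$. Since $v_1$ is a weight vector, an induction on the height of positive roots then reduces $N_+v_1=0$ to the vanishing of these three generators on $v_1$: writing a positive root vector of height $\ge 2$ as a bracket $[e,y]$ with $y$ of smaller height, the identity $[e,y]v_1=e(yv_1)-(-1)^{|e||y|}y(ev_1)$ kills it once both $ev_1$ and $yv_1$ vanish, the super-signs being irrelevant. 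In particular $e_{12}(0)v_1=[e_1(0),e_2(0)]v_1$ needs no separate check.

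It therefore remains to compute $e_1(0)v_1$, $e_2(0)v_1$ and $f_{12}(1)v_1$. Each is evaluated by repeatedly applying the affine relation $[x(m),y(n)]=[x,y](m+n)+m\,\delta_{m+n,0}(x,y)k$, with $[\cdot,\cdot]$ the super-bracket of $\mathfrak g$ and $k$ acting by $-\frac{1}{2}$, to move every operator to the right onto the vacuum $\mathbf 1$, which is annihilated by all $x(n)$ with $n\ge 0$. The needed structure constants, such as $[f_{12},e_{12}]=-h_+$, $[f_{12},e_1]=-f_2$, $\{f_2,e_2\}=h_2$ and $\{e_1,e_2\}=e_{12}$, together with the form values $(e_{12},f_{12})=(e_1,f_1)=(e_2,f_2)=(h_1,h_2)=1$, are read off from the matrix realization of $sl(2|1)$. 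For instance $f_{12}(1)v_1$ collapses to $\big(-2h_2(-1)-h_-(-1)+h_+(-1)\big)\mathbf 1$, which vanishes because $h_+-h_-=2h_2$; the computations for $e_1(0)$ and $e_2(0)$ are analogous and likewise reduce to $0$.

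The only genuine friction is the careful bookkeeping of the super-signs during these reorderings, and the fact that the cancellation is delicate: in $f_{12}(1)v_1$ the constant produced by the cocycle is exactly $(e_{12},f_{12})k=-\frac{1}{2}$, and it is precisely the boundary value $k=-\frac{1}{2}$ that makes the three terms of $v_1$, with coefficients $2$, $2$ and $-1$, conspire to give zero. This is the main obstacle in the sense that it is where one must be most careful, and it simultaneously explains why such a degree-$2$ singular vector exists only at this level.
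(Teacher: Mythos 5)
Your proposal is correct and follows essentially the same route as the paper: reduce the singular-vector condition to the annihilation of $v_1$ by the three generators $e_1(0)$, $e_2(0)$, $f_{12}(1)$, verify these by direct commutation computations (your evaluation of $f_{12}(1)v_1$ as $(-2h_2(-1)-h_-(-1)+h_+(-1))\mathbf{1}=0$, including the cocycle contribution $(e_{12},f_{12})\mathcal{k}=-\frac{1}{2}$, matches the paper's calculation exactly), and confirm the weight $r_{\alpha_0+\delta}.(-\frac{1}{2}\Lambda_0)=-\frac{1}{2}\Lambda_0-2\delta+\alpha_1+\alpha_2$ via the dot action. The only difference is that you make explicit the standard points the paper leaves implicit (nonvanishing via PBW and the height-induction reduction to the Chevalley generators), which is fine but not a different method.
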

\begin{proof}
It sufficients to show $e_1(0).v_1=e_2(0).v_1=f_{12}(1).v_1=0$.
Since
\begin{align*}
e_1(0).v_1&=2e_1(0)e_1(-1)e_2(-1)\mathbf{1}+2e_1(0)h_-(-1)e_{12}(-1)\mathbf{1}-e_1(0)e_{12}(-2)\mathbf{1}\\&=
-2e_1(-1)e_{12}(-1)\mathbf{1}+2e_1(-1)e_{12}(-1)\mathbf{1}=0,\\
e_2(0).v_1&=2e_2(0)e_1(-1)e_2(-1)\mathbf{1}+2e_2(0)h_-(-1)e_{12}(-1)\mathbf{1}-e_2(0)e_{12}(-2)\mathbf{1}\\&=
2e_{12}(-1)e_2(-1)\mathbf{1}-2e_2(-1)e_{12}(-1)\mathbf{1}=0,\\
f_{12}(1).v_1&=2f_{12}(1)e_1(-1)e_2(-1)\mathbf{1}+2f_{12}(1)h_-(-1)e_{12}(-1)\mathbf{1}-f_{12}(1)e_{12}(-2)\mathbf{1}\\&=
-2f_2(0)e_2(-1)\mathbf{1}+2h_-(-1)(-h_+(0)-\frac{1}{2})\mathbf{1}+h_+(-1)\mathbf{1}\\&=
-2h_2(-1)\mathbf{1}-h_-(-1)\mathbf{1}+h_+(-1)\mathbf{1}=0,
\end{align*}
then $v_1$ is a singular vector of $\widetilde{\mathfrak{g}}$-module $V(-\frac{1}{2},\mathbb{C})$.
Since $$r_{\alpha_0+\delta}.(-\frac{1}{2}\Lambda_0)=-\frac{1}{2}\Lambda_0-(-\frac{1}{2}\Lambda_0+\rho,(\alpha_0+\delta)^{\vee})(\alpha_0+\delta)=
-\frac{1}{2}\Lambda_0-2\delta +\alpha_1+\alpha_2,$$
it shows that the weight of $v_1$ is $r_{\alpha_0+\delta}.(-\frac{1}{2}\Lambda_0)$.
\end{proof}
\begin{rem}
{\rm We can also obtain the singular vector $v_1$ from \cite{BT} or \cite{S}, which generalized the Malikov-Feigin-Fuchs construction of singular vectors
 of Verma $\widehat{{sl}_2}$-module (cf. \cite{MFF}) to that of $\widehat{{sl}(2|1)}$.}
\end{rem}

From the Lemma \ref{lemsv00}, we have $F(\overline{v_1})=-2e_2e_1+2h_-e_{12}+e_{12}$,
thus we have $$A(L_{\widehat{\mathfrak{g}}}(-\frac{1}{2},0))\cong U(\mathfrak{g})\Big/\langle -2e_2e_1+2h_-e_{12}+e_{12}\rangle.$$

\begin{lem}
$P_0=\mbox{span}_{\mathbb{C}}\{p_1(h_1,h_2),p_2(h_1,h_2)\}$, where $$p_1(h_1,h_2)=h_1(2h_1-4h_2+1),~p_2(h_1,h_2)=h_2(2h_2-4h_1+1).$$
\end{lem}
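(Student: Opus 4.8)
The plan is to exploit the explicit singular vector of Lemma \ref{lemsv00}. Set $w:=F(\overline{v_1})=-2e_2e_1+2h_-e_{12}+e_{12}$. By the relation $F(\overline{x(0)u})=[x,F(\overline{u})]$, the module $R$ is the span of the iterated adjoint brackets $[x_1,[\cdots,[x_j,w]\cdots]]$ with $x_i\in\mathfrak{g}$, and $w$ is a highest weight vector: its $\mathfrak{h}$-weight is the $\mathfrak{h}$-part $\theta=\alpha_1+\alpha_2$ of the weight $r_{\alpha_0+\delta}.(-\frac{1}{2}\Lambda_0)$, and the singular vector conditions $e_1(0)v_1=e_2(0)v_1=0$ give $[e_1,w]=[e_2,w]=0$, whence $[e_{12},w]=0$ as well. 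Since $w$ is a highest weight vector, $R$ is obtained by applying only lowering operators, so the zero-weight subspace $R_0$ is the image of the weight-$(-\theta)$ part of $U(\mathfrak{n}_-)$ acting on $w$ by iterated brackets.

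First I would pin down $R_0$. By the PBW theorem for $U(\mathfrak{n}_-)$, ordered in $f_1,f_2,f_{12}$ and using $f_1^2=f_2^2=0$, the weight-$(-\theta)$ subspace of $U(\mathfrak{n}_-)$ is two-dimensional with basis $\{f_{12},\,f_1f_2\}$. Consequently $R_0$ is spanned by the two elements $[f_{12},w]$ and $[f_1,[f_2,w]]$; the remaining natural candidate $[f_2,[f_1,w]]$ is redundant because $f_2f_1=-f_{12}-f_1f_2$ in $U(\mathfrak{n}_-)$.

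Next I would compute, for each of these two generators $r$, the polynomial $p_r\in S(\mathfrak{h})$ determined by $rv_\lambda=p_r(\lambda)v_\lambda$. The recipe is to rewrite $r$ in PBW order with the raising operators $e_1,e_2,e_{12}$ on the far right: those terms annihilate $v_\lambda$, and since $r$ has weight zero, only the pure $S(\mathfrak{h})$ monomials survive and act by the corresponding polynomial in $h_1,h_2$. Using the structure constants of $sl(2|1)$ --- in particular $[f_{12},e_1]=-f_2$, $[f_{12},e_2]=-f_1$, $[f_{12},e_{12}]=-h_+$, $[f_2,e_{12}]=e_1$, $[f_1,e_{12}]=e_2$, $\{e_i,f_i\}=h_i$, and $[f_1,f_2]=-f_{12}$ --- a direct calculation gives $[f_{12},w]=2f_1e_1+2e_2f_2-2h_-h_+-h_+$, whose $S(\mathfrak{h})$-part is $-2h_1^2+2h_2^2-h_1+h_2=p_2-p_1$, and $[f_1,[f_2,w]]=2h_1^2-4f_1e_1-4h_1h_2-2f_{12}e_{12}-2f_2e_2+h_1$, whose $S(\mathfrak{h})$-part is $2h_1^2-4h_1h_2+h_1=p_1$.

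Because $r\mapsto p_r$ is linear, $P_0$ equals the span of the images of a spanning set of $R_0$, so $P_0=\mathrm{span}_{\mathbb{C}}\{p_2-p_1,\,p_1\}=\mathrm{span}_{\mathbb{C}}\{p_1,p_2\}$, which is the claim. The only real obstacle is the third step: keeping the super signs correct when the odd element $f_2$ is moved through products and when odd raising/lowering pairs are reordered, and then correctly isolating the $S(\mathfrak{h})$-part after the PBW reordering. The weight-space counting in the first step is elementary.
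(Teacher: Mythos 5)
Your proposal is correct and follows essentially the same route as the paper: both treat $w=F(\overline{v_1})$ as a highest weight vector, use the two-dimensionality of the weight-$(-\theta)$ space of $U(\mathfrak{n}_-)$ (which justifies the paper's bound $\dim P_0\leq\dim R_0\leq 2$), and extract the Cartan parts of second-order adjoint brackets, your computation $[f_1,[f_2,w]]=2h_1^2-4f_1e_1-4h_1h_2-2f_{12}e_{12}-2f_2e_2+h_1$ agreeing verbatim with the paper's $(f_1f_2)_LF(\overline{v_1})$. The only cosmetic difference is that you use the basis $\{f_{12},\,f_1f_2\}$ and obtain $p_2-p_1$ from $[f_{12},w]$, whereas the paper uses $(f_2f_1)_L$ to get $-p_2$ directly; these are equivalent via $f_2f_1=-f_{12}-f_1f_2$, so your spanning conclusion matches.
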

\begin{proof}
It is clear that $(f_1f_2)_LF(\overline{v_1}),(f_2f_1)_LF(\overline{v_1})\in R_0$.
We have
\begin{align*}
(f_1f_2)_LF(\overline{v_1})&=[f_1,[f_2,-2e_2e_1+2h_-e_{12}+e_{12}]]=[f_1,2h_1e_1-4h_2e_1+2f_2e_{12}+e_1]\\
&=2h_1^2-4f_1e_1-4h_2h_1-2f_{12}e_{12}-2f_2e_2+h_1\\
&\equiv h_1(2h_1-4h_2+1) ~(\mbox{mod}~U(\mathfrak{g})\mathfrak{n}_+),\\
(f_2f_1)_LF(\overline{v_1})&=[f_2,[f_1,-2e_2e_1+2h_-e_{12}+e_{12}]]=[f_2,-2h_2e_2+4h_1e_2-2f_1e_{12}-e_2]\\
&=-2h_2^2+4f_2e_2+4h_1h_2+2f_{12}e_{12}+2f_1e_1-h_2\\
&\equiv -h_2(2h_2-4h_1+1) ~(\mbox{mod}~U(\mathfrak{g})\mathfrak{n}_+),
\end{align*}
hence $p_1(h_1,h_2)=h_1(2h_1-4h_2+1),p_2(h_1,h_2)=h_2(2h_2-4h_1+1)\in P_0$.
Since $\mbox{dim}~P_0\leq\mbox{dim}~R_0\leq 2$, it implies that $P_0=\mbox{span}_{\mathbb{C}}\{p_1(h_1,h_2),p_2(h_1,h_2)\}$.
\end{proof}

\begin{prop}\label{prop234}
The set $\{L(\lambda)|\lambda=0,-\frac{1}{2}\Lambda_1,-\frac{1}{2}\Lambda_2,\frac{1}{2}\Lambda_1+\frac{1}{2}\Lambda_2\}$
provides the complete list of irreducible $A(L_{\widehat{\mathfrak{g}}}(-\frac{1}{2},0))$-modules from the category $\mathcal{O}_{\mathfrak{g}}$.
\end{prop}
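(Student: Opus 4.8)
The plan is to read off the answer from Proposition \ref{propavvv}, which says that an irreducible highest weight $\mathfrak{g}$-module $L(\lambda)$ is an $A(L_{\widehat{\mathfrak{g}}}(-\frac{1}{2},0))$-module precisely when $p(\lambda)=0$ for every $p\in P_0$. Since the preceding lemma computed $P_0=\mbox{span}_{\mathbb{C}}\{p_1,p_2\}$ with $p_1(h_1,h_2)=h_1(2h_1-4h_2+1)$ and $p_2(h_1,h_2)=h_2(2h_2-4h_1+1)$, this single condition collapses to the two scalar equations $p_1(\lambda)=p_2(\lambda)=0$. Writing $\lambda_1=\lambda(h_1)$, $\lambda_2=\lambda(h_2)$, so that $p_i(\lambda)$ is obtained from $p_i$ by the substitution $h_j\mapsto\lambda_j$, the whole problem becomes the polynomial system
$$\lambda_1(2\lambda_1-4\lambda_2+1)=0,\qquad \lambda_2(2\lambda_2-4\lambda_1+1)=0$$
over $\mathbb{C}$.

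I would then solve this system exhaustively by a case split on which factor of each equation vanishes. If $\lambda_1=0$ the second equation forces $\lambda_2\in\{0,-\frac{1}{2}\}$, and symmetrically $\lambda_2=0$ forces $\lambda_1\in\{0,-\frac{1}{2}\}$; in the remaining case both linear factors vanish, and adding and subtracting $2\lambda_1-4\lambda_2+1=0$ and $2\lambda_2-4\lambda_1+1=0$ gives $\lambda_1=\lambda_2=\frac{1}{2}$. This yields exactly the four pairs $(\lambda_1,\lambda_2)\in\{(0,0),(-\frac{1}{2},0),(0,-\frac{1}{2}),(\frac{1}{2},\frac{1}{2})\}$, and a direct substitution confirms that each indeed annihilates both $p_1$ and $p_2$. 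To translate back into the stated form I use that, since $\Pi^{\vee}=\{h_1,h_2\}$, the fundamental weights satisfy $\Lambda_i(h_j)=\delta_{ij}$, so $\{\Lambda_1,\Lambda_2\}$ is the basis of $\mathfrak{h}^*$ dual to $\{h_1,h_2\}$ and $\lambda=\lambda_1\Lambda_1+\lambda_2\Lambda_2$. The four solutions become $0$, $-\frac{1}{2}\Lambda_1$, $-\frac{1}{2}\Lambda_2$ and $\frac{1}{2}\Lambda_1+\frac{1}{2}\Lambda_2$, matching the asserted list.

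Finally, to see that this is the complete list of irreducible objects of $\mathcal{O}_{\mathfrak{g}}$ (and not merely of the highest weight ones), I would note that any irreducible module in $\mathcal{O}_{\mathfrak{g}}$ has at least one maximal weight and hence a highest weight vector, so by irreducibility it is some $L(\lambda)$; thus Proposition \ref{propavvv} applies to every irreducible in $\mathcal{O}_{\mathfrak{g}}$ and nothing is missed. Since the argument comes down to a two-variable quadratic system, I do not expect any serious obstacle; the only points demanding care are the normalization identifying $\lambda$ with $(\lambda_1,\lambda_2)$ through the coroots $h_1,h_2$, and the remark that every irreducible in $\mathcal{O}_{\mathfrak{g}}$ is of highest weight type, which guarantees that the classification via $P_0$ is exhaustive.
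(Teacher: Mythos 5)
Your proposal is correct and takes essentially the same route as the paper: both invoke Proposition \ref{propavvv} to reduce the classification to the system $p_1(\lambda)=p_2(\lambda)=0$ and solve it to obtain exactly $(\lambda(h_1),\lambda(h_2))\in\{(0,0),(-\tfrac{1}{2},0),(0,-\tfrac{1}{2}),(\tfrac{1}{2},\tfrac{1}{2})\}$, i.e.\ the four stated weights. Your extra remarks --- the explicit case split, the normalization $\lambda=\lambda(h_1)\Lambda_1+\lambda(h_2)\Lambda_2$ via $\Lambda_i(h_j)=\delta_{ij}$, and the observation that every irreducible object of $\mathcal{O}_{\mathfrak{g}}$ is a highest weight module so the classification is exhaustive --- merely spell out steps the paper leaves implicit.
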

\begin{proof}
Let $\lambda\in\mathfrak{h}^*$
such that $p(\lambda)=0$ for all $p\in P_0$.
Then $p_1(\lambda)=0$ and $p_2(\lambda)=0$,
i.e., $\lambda(h_1)(2\lambda(h_1)-4\lambda(h_2)+1)=0$ and $\lambda(h_2)(2\lambda(h_2)-4\lambda(h_1)+1)=0$.
The solutions $(\lambda(h_1),\lambda(h_2))$ are $(0,0),(-\frac{1}{2},0),(0,-\frac{1}{2}),(\frac{1}{2},\frac{1}{2})$,
that is, $\lambda=0,-\frac{1}{2}\Lambda_1,-\frac{1}{2}\Lambda_2$ or $\frac{1}{2}\Lambda_1+\frac{1}{2}\Lambda_2$.
Thus from the Proposition \ref{propavvv}, we obtain that
$L(0),L(-\frac{1}{2}\Lambda_1),L(-\frac{1}{2}\Lambda_2)$ and $L(\frac{1}{2}\Lambda_1+\frac{1}{2}\Lambda_2)$
are all irreducible $A(L_{\widehat{\mathfrak{g}}}(-\frac{1}{2},0))$-modules from the category $\mathcal{O}_{\mathfrak{g}}$.
\end{proof}

Thus we obtain all irreducible weak $L_{\widehat{\mathfrak{g}}}(-\frac{1}{2},0)$-modules in the category $\mathcal{O}_{-\frac{1}{2}}$.

\begin{prop}
The set $\{L(-\frac{1}{2},\lambda)|\lambda=0,-\frac{1}{2}\Lambda_1,-\frac{1}{2}\Lambda_2,\frac{1}{2}\Lambda_1+\frac{1}{2}\Lambda_2\}$
provides the complete list of irreducible modules in the category $\mathcal{O}_{-\frac{1}{2}}$.
\end{prop}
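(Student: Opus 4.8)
The plan is to obtain this statement as the module-theoretic translation of Proposition \ref{prop234} through Zhu's correspondence, so that no new computation is required. Recall from Theorem \ref{thmzhu}(c),(d) that the functors $\Omega$ and $L$ are mutually inverse bijections between the irreducible $A(L_{\widehat{\mathfrak{g}}}(-\frac{1}{2},0))$-modules and the irreducible $\mathbb{Q}_+$-graded weak $L_{\widehat{\mathfrak{g}}}(-\frac{1}{2},0)$-modules. As noted right after the definition of $\mathcal{O}_{\mathfrak{g}}$, restricting this bijection to the subcategories cut out by the category-$\mathcal{O}$ condition yields a one-to-one correspondence between the irreducible objects of $\mathcal{O}_{-\frac{1}{2}}$ and the irreducible $A(L_{\widehat{\mathfrak{g}}}(-\frac{1}{2},0))$-modules lying in $\mathcal{O}_{\mathfrak{g}}$. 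By Proposition \ref{prop234} the latter set is exactly $\{L(0),L(-\frac{1}{2}\Lambda_1),L(-\frac{1}{2}\Lambda_2),L(\frac{1}{2}\Lambda_1+\frac{1}{2}\Lambda_2)\}$, so the whole task reduces to identifying, for each such $\lambda$, the irreducible weak module attached to $L(\lambda)$ under the correspondence.

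The key step I would carry out is the identification $L(L(\lambda))\cong L(-\frac{1}{2},\lambda)$, equivalently $\Omega(L(-\frac{1}{2},\lambda))\cong L(\lambda)$ as $A(L_{\widehat{\mathfrak{g}}}(-\frac{1}{2},0))$-modules. For this I would argue that the simple $\widehat{\mathfrak{g}}$-module $L(-\frac{1}{2},\lambda)$ is the irreducible quotient of the generalized Verma module $V(-\frac{1}{2},L(\lambda))$, whose lowest conformal weight space is the $\mathfrak{g}$-module $L(\lambda)$; since $\Omega$ extracts precisely this lowest weight space and, by Theorem \ref{thmzhu}(b), $a$ acts there through $o(a)$, i.e. through the image of $a$ in the Zhu algebra, one gets $\Omega(L(-\frac{1}{2},\lambda))\cong L(\lambda)$. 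As $L(\lambda)$ is a simple $A(L_{\widehat{\mathfrak{g}}}(-\frac{1}{2},0))$-module by Proposition \ref{prop234}, the module $L(-\frac{1}{2},\lambda)$ is a simple weak module, and the uniqueness in Theorem \ref{thmzhu}(d) forces $L(L(\lambda))\cong L(-\frac{1}{2},\lambda)$. Finally I would check that each $L(-\frac{1}{2},\lambda)$ genuinely lies in $\mathcal{O}_{-\frac{1}{2}}$: these four weights are precisely the $\mathfrak{h}$-restrictions of the admissible weights of level $-\frac{1}{2}$, so $L(-\frac{1}{2},\lambda)=L(\hat{\lambda})$ is an admissible highest weight $\widehat{\mathfrak{g}}$-module and hence belongs to the category $\mathcal{O}$.

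Putting these together, every irreducible object $M$ of $\mathcal{O}_{-\frac{1}{2}}$ satisfies $\Omega(M)\cong L(\lambda)$ for one of the four listed $\lambda$, whence $M\cong L(-\frac{1}{2},\lambda)$, and conversely each $L(-\frac{1}{2},\lambda)$ is irreducible and lies in $\mathcal{O}_{-\frac{1}{2}}$; this makes the list both exhaustive and without repetition. The only genuine subtlety is the identification in the second paragraph, namely that the lowest conformal weight space of $L(-\frac{1}{2},\lambda)$ as an $A$-module is exactly $L(\lambda)$ rather than a proper subquotient; since $L(\lambda)$ is already simple and generates $L(-\frac{1}{2},\lambda)$ as a $\widehat{\mathfrak{g}}$-module, no collapsing can occur. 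All the substantive work has already been done in Proposition \ref{prop234} and in the singular-vector computation of Lemma \ref{lemsv00}, so the present proof is purely formal.
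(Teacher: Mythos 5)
Your proposal is correct and follows essentially the same route as the paper: the paper states this proposition without a separate proof, treating it as the immediate translation of Proposition \ref{prop234} through the bijection (from Theorem \ref{thmzhu}) between irreducible weak modules in $\mathcal{O}_{-\frac{1}{2}}$ and irreducible $A(L_{\widehat{\mathfrak{g}}}(-\frac{1}{2},0))$-modules in $\mathcal{O}_{\mathfrak{g}}$, which is exactly your argument. The details you add --- the identification $\Omega(L(-\frac{1}{2},\lambda))\cong L(\lambda)$ via the top level of the quotient of the generalized Verma module, and the check that the four modules are the admissible ones and hence lie in the category $\mathcal{O}$ --- are the standard facts the paper leaves implicit, and they are handled correctly.
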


Note that the set $\{L(-\frac{1}{2},\lambda)|\lambda=0,-\frac{1}{2}\Lambda_1,-\frac{1}{2}\Lambda_2,\frac{1}{2}\Lambda_1+\frac{1}{2}\Lambda_2\}$
is exactly the set of admissible $\widehat{\mathfrak{g}}$-modules of level $-\frac{1}{2}$,
we obtain the following theorem.

\begin{thm}\label{thm1}
The category $\mathcal{O}_{-\frac{1}{2}}$ is semisimple.
\end{thm}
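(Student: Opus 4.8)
The plan is to prove semisimplicity by combining the classification of irreducibles just obtained with a vanishing-of-extensions argument organized by conformal weight. First I would record that every object of $\mathcal{O}_{-\frac12}$ has finite length with composition factors among the four modules $L(-\frac12,\lambda)$, $\lambda\in\{0,-\frac12\Lambda_1,-\frac12\Lambda_2,\frac12\Lambda_1+\frac12\Lambda_2\}$: any irreducible subquotient of $M\in\mathcal{O}_{-\frac12}$ is again an irreducible weak $L_{\widehat{\mathfrak{g}}}(-\frac12,0)$-module lying in $\mathcal{O}$, hence one of these four by the classification of the irreducible objects of $\mathcal{O}_{-\frac12}$ obtained above, while finite length follows from the finite-dimensionality of the weight spaces together with the finiteness of the set of maximal weights. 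Since $\mathcal{O}_{-\frac12}$ is then an abelian finite-length category, it is semisimple as soon as $\mathrm{Ext}^1_{\mathcal{O}_{-\frac12}}\big(L(-\frac12,\lambda),L(-\frac12,\mu)\big)=0$ for all pairs among the four weights, and this is the statement I would target.

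Second, I would separate the modules by their minimal conformal weight. Because $\rho=0$ for the chosen all-odd simple roots and $2(\mathcal{k}+1)=1$, the Sugawara operator acts on the highest weight vector of $L(-\frac12,\lambda)$ by $(\lambda,\lambda)$; using $\Lambda_1=\alpha_2$, $\Lambda_2=\alpha_1$ and the form values $(\alpha_i,\alpha_i)=0$, $(\alpha_1,\alpha_2)=1$ one finds top weight $0$ for $\lambda\in\{0,-\frac12\Lambda_1,-\frac12\Lambda_2\}$ and top weight $\frac12$ for $\lambda=\frac12\Lambda_1+\frac12\Lambda_2$. Since $d=-L(0)$ acts semisimply on any object of $\mathcal{O}$ and $\widehat{\mathfrak{g}}$ shifts the $d$-eigenvalue only by integers, a nonsplit extension of two of these modules forces their top conformal weights to be congruent modulo $\mathbb{Z}$; the whole module then splits along the two residue classes. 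This immediately kills every $\mathrm{Ext}^1$ group between $L(-\frac12,\frac12\Lambda_1+\frac12\Lambda_2)$ and the other three.

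Third, there remain the $\mathrm{Ext}^1$ groups among the three modules of top weight $0$, self-extensions included, and this is where the main work lies. Given $0\to L(-\frac12,\mu)\to M\to L(-\frac12,\lambda)\to 0$ in $\mathcal{O}_{-\frac12}$ with both tops at conformal weight $0$, I would pass to the conformal-weight-zero space $M_{(0)}$: since $\widehat{\mathfrak{h}}$ acts semisimply, $M_{(0)}$ is an honest $\mathfrak{g}$-module sitting in an exact sequence $0\to L(\mu)\to M_{(0)}\to L(\lambda)\to 0$, and by Theorem \ref{thmzhu}(b) it is a module for $A(L_{\widehat{\mathfrak{g}}}(-\frac12,0))=U(\mathfrak{g})/\langle F(\overline{v_1})\rangle$, i.e. a $\mathfrak{g}$-module annihilated by the ideal $R$. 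I would show this sequence splits by using the generators of $R_0$, namely the $_L$-translates of $F(\overline{v_1})=-2e_2e_1+2h_-e_{12}+e_{12}$ computed above, to obstruct any nontrivial gluing vector. Granting that $M_{(0)}$ is a semisimple $A(V)$-module, the $\Omega$–$L$ correspondence of Theorem \ref{thmzhu}(c),(d) upgrades this to a decomposition of $M$ into irreducible weak modules; the self-extension case is identical, with $M_{(0)}\cong L(\lambda)^{\oplus2}$.

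The hard part will be exactly this last step. Conformal weight does not separate the three modules of top weight $0$, and all three carry the same (trivial) central character for $\mathfrak{g}$, so their highest weights lie a priori in one atypical block in which nonsplit extensions of the bare $\mathfrak{g}$-modules do occur; the entire force of the argument must come from the extra constraint of being an $A(L_{\widehat{\mathfrak{g}}}(-\frac12,0))$-module, that is, from the relation $F(\overline{v_1})=0$ together with the full ideal $R$. Should a direct obstruction computation prove unwieldy, a clean alternative would be to equip each admissible module $L(-\frac12,\lambda)$ with a nondegenerate contravariant form and to use the orthogonal complement of an admissible submodule of $M$ to produce the splitting, thereby deducing the required $\mathrm{Ext}^1$-vanishing from nondegeneracy of the form.
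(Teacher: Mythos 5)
Your outline diverges substantially from the paper's proof, which is a short citation-based argument: since the four irreducibles of $\mathcal{O}_{-\frac{1}{2}}$ are exactly the admissible $\widehat{\mathfrak{g}}$-modules of level $-\frac{1}{2}$, the paper quotes Lemma 2.3.3 of \cite{GK} for $\mathrm{Ext}^1_{\widehat{\mathfrak{g}}}(L,L^\prime)=0$ between distinct admissible irreducibles, Lemma 4.1 of \cite{AMP} for the vanishing of self-extensions, and Lemma 1.3.1 of \cite{GK} to conclude semisimplicity from finitely many irreducibles with vanishing $\mathrm{Ext}^1$. Your steps one and two are sound (the computation $h_\lambda=(\lambda,\lambda)$ with $\rho=0$ and $2(\mathcal{k}+1)=1$ giving top conformal weights $0,0,0,\frac{1}{2}$ is correct, and the splitting along $L(0)$-cosets mod $\mathbb{Z}$ is valid because $L(0)$ is the intrinsic Sugawara operator and $\widehat{\mathfrak{h}}\ni d$ acts semisimply on objects of $\mathcal{O}$). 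But your step three, the only place where the level $-\frac{1}{2}$ actually enters, is explicitly left open: you ``grant'' that $M_{(0)}$ is a semisimple $A(V)$-module, and neither the $R_0$-obstruction computation nor the contravariant-form alternative is carried out. As written this is a genuine gap, since the $\mathrm{Ext}^1$-vanishing among the three top-weight-zero simples and the self-extension case constitute essentially the entire content of the theorem.

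That said, the step you flag as ``the hard part'' is in fact easy, and your worry about a single atypical block is misplaced. With the all-odd simple system one has $\Lambda_1=\alpha_2$ and $\Lambda_2=\alpha_1$, so the pairwise differences of $0$, $-\frac{1}{2}\Lambda_1$, $-\frac{1}{2}\Lambda_2$ are $\frac{1}{2}\alpha_2$, $\frac{1}{2}\alpha_1$, $\frac{1}{2}(\alpha_2-\alpha_1)$, none of which lie in the root lattice $\mathbb{Z}\alpha_1+\mathbb{Z}\alpha_2$; since $\mathfrak{g}$ shifts weights by roots and the Cartan subalgebra acts semisimply, any extension between two of these simples (already at the level of $\mathfrak{g}$-modules, hence of $A(V)$-modules) splits along root-lattice cosets of weights, with no input from the ideal $R$ at all — coincidence of central characters is irrelevant because nonvanishing $\mathrm{Ext}^1$ forces linkage by the root lattice. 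For self-extensions, a weight-vector preimage $w$ of the highest weight vector of the quotient satisfies $N_+w=0$ (no weights above the top occur), so $U(\widehat{\mathfrak{g}})w$ has one-dimensional top weight space; if it met the submodule it would equal $M$, contradicting $\dim M_{\hat{\lambda}}=2$, so the sequence splits. With these two observations your scheme closes up into a self-contained alternative to the paper's argument (whose advantage is that Gorelik--Kac's vanishing applies to admissible modules in general, without inspecting the particular weights). Two smaller points: the upgrade from split top to split module needs the remark that the submodule generated by the $L(\lambda)$-copy in $M_{(0)}$ has degree-zero part exactly that copy (normal ordering), and your finite-length claim needs care since $L(-\frac{1}{2}\Lambda_1)$, $L(-\frac{1}{2}\Lambda_2)$ are infinite-dimensional $\mathfrak{g}$-modules — bound multiplicities by the dimensions of the individual top weight spaces $M_{\hat{\lambda}}$ rather than by the conformal-weight-zero space, or invoke Lemma 1.3.1 of \cite{GK} directly as the paper does.
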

\begin{proof}
Let $\hat{\lambda}_{i}~(i=1,2,3,4)$ be the corresponding admissible weights for the admissible $\widehat{\mathfrak{g}}$-modules of level $-\frac{1}{2}$.
For any distinct admissible weights $\hat{\lambda}_{i},\hat{\lambda}_{j}$, it is clear that $\hat{\lambda}_{i}\nleq\hat{\lambda}_{j}$,
hence $L(\hat{\lambda}_{i})$ is not isomorphic to any subquotient module of $M(\hat{\lambda}_{j})$.
Then if $M\in\mathcal{O}_{-\frac{1}{2}}$ is a highest weight module, we have that $M$ is irreducible.
Similar to the proof of the Theorem 5.5 in \cite{AKMPP}, we have $\mbox{Ext}_{\mathcal{O}_{-\frac{1}{2}}}^1(L,L^\prime)=0$
for any distinct irreducible modules $L$ and $L^\prime$ of the category $\mathcal{O}_{-\frac{1}{2}}$.
From the Lemma 4.1 of \cite{AMP}, we have $\mbox{Ext}_{\mathcal{O}_{-\frac{1}{2}}}^1(L,L)=0$ for any irreducible module $L$ of the category $\mathcal{O}_{-\frac{1}{2}}$.
Since $\mathcal{O}_{-\frac{1}{2}}$ has finitely many irreducible modules, from the Lemma 1.3.1 of \cite{GK}, the category $\mathcal{O}_{-\frac{1}{2}}$ is semisimple.
\end{proof}

Now we can obtain all irreducible ordinary $L_{\widehat{\mathfrak{g}}}(-\frac{1}{2},0)$-modules from the irreducible modules in $\mathcal{O}_{-\frac{1}{2}}$.

\begin{prop}\label{thmord}
All irreducible ordinary $L_{\widehat{\mathfrak{g}}}(-\frac{1}{2},0)$-modules are $L(-\frac{1}{2},0),L(-\frac{1}{2},\frac{1}{2}\Lambda_1+\frac{1}{2}\Lambda_2)$.
Moreover, the category of ordinary $L_{\widehat{\mathfrak{g}}}(-\frac{1}{2},0)$-modules is semisimple.
\end{prop}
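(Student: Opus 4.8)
The plan is to read off the irreducible ordinary modules from the classification already obtained in Proposition \ref{prop234}, and then to handle complete reducibility by a socle argument rather than by a naive appeal to Theorem \ref{thm1}. First I would use Theorem \ref{thmzhu}(d): an irreducible ordinary $L_{\widehat{\mathfrak{g}}}(-\frac{1}{2},0)$-module is uniquely determined by its lowest conformal weight space, which is an irreducible $A(L_{\widehat{\mathfrak{g}}}(-\frac{1}{2},0))$-module. Since an ordinary module has finite-dimensional $L(0)$-eigenspaces, this lowest space is \emph{finite-dimensional}, so the relevant $A$-modules are exactly the finite-dimensional irreducible $\mathfrak{g}$-modules $L(\lambda)$ annihilated by $F(\overline{v_1})$. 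Such modules are highest weight and hence automatically lie in $\mathcal{O}_{\mathfrak{g}}$, so they occur among the four weights listed in Proposition \ref{prop234}.

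Next I would isolate which of these four are finite-dimensional. Writing $\mathfrak{g}_{\bar 0}=\mathbb{C}h_-\oplus\langle e_{12},f_{12},h_+\rangle$ with $\langle e_{12},f_{12},h_+\rangle\cong sl_2$ and $[e_{12},f_{12}]=h_+$, a highest weight $\mathfrak{g}$-module $L(\lambda)$ is finite-dimensional precisely when its even $sl_2$-weight is a nonnegative integer, i.e. when $\lambda(h_+)=\lambda(h_1)+\lambda(h_2)\in\mathbb{Z}_+$. Evaluating on the four weights gives $0$ for $\lambda=0$, $-\frac12$ for $\lambda=-\frac12\Lambda_1$ and for $\lambda=-\frac12\Lambda_2$, and $1$ for $\lambda=\frac12\Lambda_1+\frac12\Lambda_2$. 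Thus only $\lambda=0$ and $\lambda=\frac12\Lambda_1+\frac12\Lambda_2$ yield finite-dimensional tops, so the irreducible ordinary modules are exactly $L(-\frac{1}{2},0)$ and $L(-\frac{1}{2},\frac12\Lambda_1+\frac12\Lambda_2)$. Conversely these two are genuinely ordinary, since their tops are finite-dimensional, their conformal weights are bounded below, and $L(0)=-d$ acts semisimply on a highest weight module.

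For the semisimplicity of the ordinary module category I expect the main obstacle to be that an arbitrary ordinary module need \emph{not} lie in $\mathcal{O}_{-\frac12}$: an infinite direct sum of irreducibles has infinitely many maximal weights, so one cannot simply quote Theorem \ref{thm1}. I would instead prove $M=\operatorname{soc}(M)$ by showing every cyclic ordinary submodule is semisimple. A cyclic ordinary module $N$ is lower-bounded with finite-dimensional conformal weight spaces; each such $L(0)$-eigenspace is preserved by the zero modes $a(0)$ for $a\in\mathfrak{g}$ (as $[L(0),a(0)]=0$), hence is a finite-dimensional $\mathfrak{g}$-module on which the Cartan $\mathfrak{h}$ acts semisimply, so $\widehat{\mathfrak{h}}$ acts semisimply on $N$ with finite-dimensional weight spaces. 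Being cyclic, $N$ has only finitely many maximal weights, so $N\in\mathcal{O}_{-\frac12}$ and is semisimple by Theorem \ref{thm1}. Then every vector of $M$ lies in such an $N\subseteq\operatorname{soc}(M)$, giving $M=\operatorname{soc}(M)$; since summands of an ordinary module are ordinary, $M$ decomposes into the two irreducible ordinary modules above, which is exactly the asserted semisimplicity.
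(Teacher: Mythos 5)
Your classification of the irreducible ordinary modules is essentially the paper's own argument: the paper also passes to the lowest $L(0)$-eigenspace via $\Omega(M)$ (citing Proposition 5.4 of \cite{DLM3}), notes it is a finite-dimensional irreducible $\mathfrak{g}$-module lying in $\mathcal{O}_{\mathfrak{g}}$ (citing Proposition 1.39 of \cite{CW}), and then discards $-\frac{1}{2}\Lambda_1$ and $-\frac{1}{2}\Lambda_2$ by the finite-dimensionality criterion (citing Theorem 14.1.1 of \cite{M} and Lemma 1.4 of \cite{KW01}); your direct test $\lambda(h_+)=\lambda(h_1)+\lambda(h_2)\in\mathbb{Z}_+$ via the even $sl_2$-triple $\langle e_{12},h_+,f_{12}\rangle$ is a hands-on substitute for those citations. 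Note only that your $sl_2$ computation proves \emph{necessity} of $\lambda(h_+)\in\mathbb{Z}_+$; sufficiency requires the type I Kac-module argument (or the cited results), a small but real step you assert rather than prove.

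For the semisimplicity part there is a genuine gap. Your reduction of a cyclic ordinary submodule $N$ to the category $\mathcal{O}_{-\frac{1}{2}}$ rests on the claim that the Cartan $\mathfrak{h}$ acts semisimply on each finite-dimensional $L(0)$-eigenspace, i.e.\ that every finite-dimensional $\mathfrak{g}$-module is a weight module. For a Lie \emph{super}algebra there is no Weyl-type theorem guaranteeing this: your argument produces semisimplicity only for $h_+$ (it lies in the even $sl_2$-triple), whereas $h_-$ spans the center of $\mathfrak{g}_{\bar{0}}\cong gl_2$ and sits in no $sl_2$-triple; its only expression is $h_-=[e_1(0),f_1(0)]-[e_2(0),f_2(0)]$ with odd operators squaring to zero, a configuration that admits finite-dimensional non-semisimple actions in general. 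Since finite-dimensional $sl(2|1)$-modules need not be completely reducible, this semisimplicity cannot be taken for granted, and your parenthetical "hence $\mathfrak{h}$ acts semisimply" is unproved (the subsequent claim that a cyclic module has only finitely many maximal weights is also asserted without argument). The clean repair stays entirely inside the paper: show instead that every ordinary module lies in $\mathcal{C}_{-\frac{1}{2}}$. Indeed, each positive mode $a(n)$, $n\geq 1$, strictly lowers the (lower-truncated) $L(0)$-spectrum, and the zero modes of $\mathfrak{n}_+$ act nilpotently on each finite-dimensional eigenspace because $e_{12}(0)$ lies in an $sl_2$-triple while $e_1(0)^2=e_2(0)^2=0$ and $[e_1,e_2]=e_{12}$ with $e_{12}$ central in $\mathfrak{n}_+$; hence every element of $N_+$ acts locally nilpotently, no Cartan-semisimplicity needed. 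Then Theorem \ref{thmc12} gives complete reducibility at once. This is in the spirit of, but more explicit than, the paper, which disposes of semisimplicity with the single sentence "similar to the arguments in Theorem \ref{thm1}" — your instinct that an arbitrary ordinary module need not visibly lie in $\mathcal{O}_{-\frac{1}{2}}$ is a fair criticism of that brevity, but your proposed fix does not close the hole, while the $\mathcal{C}_{-\frac{1}{2}}$ route does.
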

\begin{proof}
Let $M$ be an irreducible ordinary $L_{\widehat{\mathfrak{g}}}(-\frac{1}{2},0)$-module, then
$M=\bigoplus_{h\in\mathbb{Z}_+}M_{h+\alpha}$ such that $\alpha\in\mathbb{C}$ and
$\mbox{dim}~M_{h+\alpha}<\infty$ for any $h\in\mathbb{Z}_+$.
For any $w\in M_{h+\alpha}, h\in\mathbb{Z}_+$,
since $M$ is a $\tilde{\mathfrak{g}}$-module
and $\mbox{wt}~aw=\mbox{wt}~w$ for any $a \in\mathfrak{g}$,
we have $M_{h+\alpha}$ is a finite dimensional $\mathfrak{g}$-module.
From the Proposition 5.4 of \cite{DLM3},
we have that $\Omega(M)=M_{\alpha}$ is an irreducible $A(L_{\widehat{\mathfrak{g}}}(-\frac{1}{2},0))$-module.
By the Proposition 1.39 of \cite{CW}, we have that $M_{\alpha}$ lies in $\mathcal{O}_{\mathfrak{g}}$.
Then from the Theorem 14.1.1 of \cite{M}, the Lemma 1.4 of \cite{KW01} and Proposition \ref{prop234}, we have that $M_{\alpha}$ is
an irreducible module $L(0)$ or $L(\frac{1}{2}\Lambda_1+\frac{1}{2}\Lambda_2)$, that is,
$M$ is $L(-\frac{1}{2},0)$ or $L(-\frac{1}{2},\frac{1}{2}\Lambda_1+\frac{1}{2}\Lambda_2)$.
Similar to the arguments in Theorem \ref{thm1}, we get that the category of ordinary $L_{\widehat{\mathfrak{g}}}(-\frac{1}{2},0)$-modules is semisimple.
\end{proof}

\subsection{Category $\mathcal{C}_{-\frac{1}{2}}$}

Denote by $\mathcal{C}_{\mathcal{k}}$ the full subcategory of weak $L_{\widehat{\mathfrak{g}}}(\mathcal{k},0)$-module category
such that $M$ is an object of $\mathcal{C}_{\mathcal{k}}$ if and only if $a$ acts locally nilpotently on $M$ for all $a\in N_+$.
It is clear that the object of $\mathcal{O}_{\mathcal{k}}$ is also an object of $\mathcal{C}_{\mathcal{k}}$.

For a $\mathfrak{h}$-module $U$, we define a linear function on $U^*\otimes M(\mathcal{k},U)$ as follows:
$$\langle u^\prime, u\rangle=u^\prime(\pi^\prime(u)), ~\mbox{for}~~u^\prime\in U^*,u\in M(\mathcal{k},U),$$
where $\pi^\prime$ is the projection of $M(\mathcal{k},U)$ onto the subspace $U$.
Define
\begin{equation*}
I=\{u\in M(\mathcal{k},U)\mid\langle u^\prime, au\rangle=0~~\mbox{for~~any}~~u^\prime\in U^*,a\in U(\tilde{\mathfrak{g}})\}.
\end{equation*}
It is clear that
$I$ is unique and maximal in the set of submodules of $M(\mathcal{k},U)$ which intersects with $U$ trivially (cf. \cite{LW}).
Set $L(\mathcal{k},U)=M(\mathcal{k},U)/I$ and we regard $U$ as a subspace of $L(\mathcal{k},U)$.
Then $\pi^\prime$ induces a projection of $L(\mathcal{k},U)$ to $U$, which we still denote it by $\pi^\prime$.
It is clear that $L(\mathcal{k},U)$ is a weak module for $V_{\widehat{\mathfrak{g}}}(\mathcal{k},0)$.
Let $Y(\cdot,z)$ be the vertex operator map defining the module structure on $L(\mathcal{k},U)$,
then $Y(\cdot,z)$ is an intertwining operator of type $\begin{pmatrix}L(\mathcal{k},U)\\ V_{\widehat{\mathfrak{g}}}(\mathcal{k},0)L(\mathcal{k},U)\end{pmatrix}$.
Let $\mathcal{Y}(u,z)v=(-1)^{|u||v|}e^{zL(-1)}Y(v,-z)u$
for any homogeneous element $u\in L(\mathcal{k},U),v\in V_{\widehat{\mathfrak{g}}}(\mathcal{k},0)$,
then $\mathcal{Y}(\cdot,z)$ is an intertwining operator of type
$\begin{pmatrix}L(\mathcal{k},U)\\ L(\mathcal{k},U)V_{\widehat{\mathfrak{g}}}(\mathcal{k},0)\end{pmatrix}$ (cf. \cite{FHL}).
It is clear that for any $u^\prime\in U^*, u\in L(\mathcal{k},U), a\in N_-U(N_-), w\in V_{\widehat{\mathfrak{g}}}(\mathcal{k},0)$,
we have $\pi^\prime(a\mathcal{Y}(u,z)w)=0$, then
$\langle u^\prime,a\mathcal{Y}(u,z)w\rangle=0$.

\begin{prop}\label{propmodule}
$L(-\frac{1}{2},U)$ is a weak $L_{\widehat{\mathfrak{g}}}(-\frac{1}{2},0)$-module if and only if $p_1(h_1,h_2)U=0$ and $p_2(h_1,h_2)U=0$, where
\begin{equation*}
p_1(h_1,h_2)=h_1(2h_1-4h_2+1),~p_2(h_1,h_2)=h_2(2h_2-4h_1+1).
\end{equation*}
\end{prop}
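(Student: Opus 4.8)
The plan is to reduce the module condition to the vanishing of the single generating singular vector $v_1$ of Lemma~\ref{lemsv00}, and then to read that vanishing off the top level. Since $L_{\widehat{\mathfrak{g}}}(-\frac12,0)$ is the quotient of $V_{\widehat{\mathfrak{g}}}(-\frac12,0)$ by the maximal ideal generated by $v_1$ (Proposition~\ref{lemsv1}), the weak $V_{\widehat{\mathfrak{g}}}(-\frac12,0)$-module $L(-\frac12,U)$ is a weak $L_{\widehat{\mathfrak{g}}}(-\frac12,0)$-module if and only if $Y(v_1,z)=0$ on $L(-\frac12,U)$: the operators attached to $x(m)v_1$ are built from $Y(v_1,z)$ and the $\widetilde{\mathfrak{g}}$-action, so the whole ideal acts by zero as soon as $v_1$ does. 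By the skew-symmetry relation $Y(v_1,z)u=(-1)^{|v_1||u|}e^{zL(-1)}\mathcal{Y}(u,-z)v_1$ together with the invertibility of $e^{zL(-1)}$, this is in turn equivalent to $\mathcal{Y}(u,z)v_1=0$ for every $u\in L(-\frac12,U)$.

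For the forward implication I would argue on the top level. If $L(-\frac12,U)$ is a weak $L_{\widehat{\mathfrak{g}}}(-\frac12,0)$-module, then in particular the zero mode $o(v_1)=(v_1)_1$ annihilates the lowest graded piece $L(-\frac12,U)_{(0)}$, which lies in $\Omega(L(-\frac12,U))$ and is therefore a module over $A(V_{\widehat{\mathfrak{g}}}(-\frac12,0))\cong U(\mathfrak{g})$ by Lemma~\ref{lemzhu}; on it $o(v_1)$ acts as $F(\overline{v_1})=-2e_2e_1+2h_-e_{12}+e_{12}$. Since $\mathfrak{n}_+U=0$ and $h_1,h_2$ act on $U$ through the chosen weights, one has $F(\overline{v_1})U=0$, and evaluating $F(\overline{v_1})$ on the weight-preserving descendants $f_1(0)f_2(0)u$ and $f_2(0)f_1(0)u$ reproduces exactly the adjoint-action computation recorded before the proposition: modulo $U(\mathfrak{g})\mathfrak{n}_+$ these give the polynomials $p_1(h_1,h_2)u$ and $p_2(h_1,h_2)u$, while the $U(\mathfrak{g})\mathfrak{n}_+$ part kills $u$. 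As $U$ embeds into $L(-\frac12,U)$, the vanishing of $o(v_1)$ forces $p_1(h_1,h_2)U=0$ and $p_2(h_1,h_2)U=0$.

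For the converse I would assume $p_1(h_1,h_2)U=0$ and $p_2(h_1,h_2)U=0$ and prove $\mathcal{Y}(u,z)v_1=0$ in $L(-\frac12,U)$ for all $u$. Because $L(-\frac12,U)=M(-\frac12,U)/I$ and $I$ is cut out precisely by the pairings $\langle u',a\,\cdot\,\rangle$, it suffices to show $\langle u',a\,\mathcal{Y}(u,z)v_1\rangle=0$ for all $u'\in U^*$ and all $a\in U(\widetilde{\mathfrak{g}})$, coefficientwise. The fact recorded just above the proposition disposes of every $a\in N_-U(N_-)$, so by the PBW factorization $U(\widetilde{\mathfrak{g}})=U(N_-)U(\mathfrak{h}\oplus\mathbb{C}k)U(N_+)$ I may restrict to representatives in $U(\mathfrak{h}\oplus\mathbb{C}k)U(N_+)$. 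Using the intertwining-operator commutator $[x(0),\mathcal{Y}(u,z)]=\mathcal{Y}(x(0)u,z)$ I would transport the Cartan and positive modes off $\mathcal{Y}(u,z)v_1$: they either act on $u$ (by a weight scalar, or by zero when they lie in $N_+$) or, after being moved into the second slot, act on $v_1$, producing descendants $f_\bullet v_1$ whose Zhu images are the zero-weight elements $(f_1f_2)_LF(\overline{v_1})\equiv p_1(h_1,h_2)$ and $(f_2f_1)_LF(\overline{v_1})\equiv -p_2(h_1,h_2)\pmod{U(\mathfrak{g})\mathfrak{n}_+}$. The hypothesis $p_1(h_1,h_2)U=p_2(h_1,h_2)U=0$ then makes every surviving contribution vanish, giving $\mathcal{Y}(u,z)v_1=0$ and hence the module property.

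The main obstacle is exactly this last reduction: showing that, after discarding the $N_-U(N_-)$-terms, every remaining value $\langle u',a\,\mathcal{Y}(u,z)v_1\rangle$ collapses to the action of $p_1(h_1,h_2)$ and $p_2(h_1,h_2)$ on $U$. This demands careful bookkeeping of the affine commutators through the intertwining operator, keeping track of the central contribution at level $-\frac12$ and of the signs coming from the $\mathbb{Z}_2$-grading, and then matching the descendants of $v_1$ with the adjoint-action polynomials. It is the same mechanism underlying Proposition~\ref{propavvv}, but because $U$ is an arbitrary $\mathfrak{h}$-module rather than an irreducible highest weight module, it must be carried out through the $\mathcal{Y}$-formalism rather than through the $A(V)$-module dictionary of Theorem~\ref{thmzhu}.
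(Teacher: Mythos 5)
Your overall frame agrees with the paper's: reduce the module condition to $\mathcal{Y}(u,z)v_1=0$ via skew-symmetry, use the recorded fact $\langle u^\prime,a\,\mathcal{Y}(u,z)w\rangle=0$ for $a\in N_-U(N_-)$, and reduce to top-level identities that become $p_1,p_2$. Your forward direction, run through $o(v_1)$ and the Zhu algebra on $\Omega(L(-\frac12,U))$, is a legitimate (and clean) alternative to the paper, which instead derives both implications simultaneously from a single equivalence. But the converse --- the substantive half --- has a genuine gap, and you flag it yourself as ``the main obstacle'' without closing it. Worse, the mechanism you sketch for producing $p_1,p_2$ is not viable as stated: you say the Cartan and positive modes, ``after being moved into the second slot, act on $v_1$, producing descendants $f_\bullet v_1$ whose Zhu images are'' the polynomials. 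That cannot happen: $v_1$ is a singular vector, so $N_+v_1=0$ and $\mathfrak{h}$-modes act on it by scalars; transporting elements of $U(\mathfrak{h}\oplus\mathbb{C}k)U(N_+)$ across $\mathcal{Y}(u,z)$ never generates $f$-descendants of $v_1$, and moreover this transport only reduces the outer $a\in U(\tilde{\mathfrak{g}})$ to $1$ while leaving $u$ arbitrary --- it does not by itself reduce $u$ to the top level $U(\mathfrak{g})U$ (that reduction is the content of Lemma 2.9 of [DLM2], which the paper invokes).

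The missing idea is the opposite move, which is the heart of the paper's proof: write $v_1=a\mathbf{1}$ with $a=2e_1(-1)e_2(-1)+2h_-(-1)e_{12}(-1)-e_{12}(-2)\in U(N_-)$ and transport $a$ out of the \emph{state} into the \emph{first} slot of $\mathcal{Y}$. For top-level $u$ one has $x(j)u=0$ for $j>0$, and the left-multiplication terms $x(m)\mathcal{Y}(u,z)w$ with $x(m)\in N_-$ die against $u^\prime$ by the recorded fact, so the commutator formula gives $\langle u^\prime,\mathcal{Y}(u,z)x(m)w\rangle=\pm z^{m}\langle u^\prime,\mathcal{Y}(x(0)u,z)w\rangle$, whence $\langle u^\prime,\mathcal{Y}(u,z)a\mathbf{1}\rangle=\pm z^{\deg a}\langle u^\prime,\mathcal{Y}(\sigma\pi(a)u,z)\mathbf{1}\rangle$. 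Then the creation property $\mathcal{Y}(w,z)\mathbf{1}=e^{zL(-1)}w$ collapses the whole condition to the finite-dimensional statement $\langle u^\prime,\sigma\pi(a)\,U(\mathfrak{g})U\rangle=0$ with $\sigma\pi(a)=-2e_2e_1+2e_{12}h_-+e_{12}$; finally the grading restriction on the pairing shows only $u\in f_1f_2U+f_2f_1U$ (with $f_{12}U$ absorbed, since $f_{12}$ is a bracket of $f_1,f_2$) can contribute, and since $\mathfrak{n}_+U=0$ the two surviving conditions are exactly $p_1(h_1,h_2)U=0$ and $p_2(h_1,h_2)U=0$. Without this transport-and-create step your converse does not get off the ground, so as written the proposal proves only one direction.
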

\begin{proof}
Similar to the proof of Lemma 2.9 of \cite{DLM2} (see also the Lemma 3.2 of \cite{LW}), we have the $\tilde{\mathfrak{g}}$-module $L(-\frac{1}{2},U)$ is a weak module for $L_{\widehat{\mathfrak{g}}}(-\frac{1}{2},0)$
if and only if
$\langle u^\prime,\mathcal{Y}(u,z)v_{1}\rangle=0$ for any $u^\prime\in U^*, u\in U(\mathfrak{g})U\subset L(-\frac{1}{2},U)$.

Let $\sigma$ be the anti-automorphism of superalgebra $U(\mathfrak{g})$ with $\sigma(ab)=(-1)^{|a||b|}\sigma(b)\sigma(a)$ for any $a,b\in U(\mathfrak{g})$
such that $\sigma(a)=-a$ for any $a\in\mathfrak{g}$,
$\pi$ the projection $\tilde{\mathfrak{g}}$ onto $\mathfrak{g}$ such that $\pi(a\otimes t^n)=a$ for any $a\in\mathfrak{g}$ and $\pi(c)=0$.
Define $\mbox{deg}(a\otimes t^n)=n$ for any $n\in\mathbb{Z}, a\in\mathfrak{g}$.
For any $u^\prime\in U^*, a(m)\in N_-, u\in U(\mathfrak{g})U\subset L(-\frac{1}{2},U), w\in V_{\widehat{\mathfrak{g}}}(-\frac{1}{2},0)$, we have
\begin{align*}
 \langle u^\prime,\mathcal{Y}(u,z)a(m)w\rangle&=(-1)^{|u||a|}(\langle u^\prime,a(m)\mathcal{Y}(u,z)w\rangle-\langle u^\prime,[a(m),\mathcal{Y}(u,z)]w\rangle)\\
 &=(-1)^{|u||a|+1}\langle u^\prime,\sum_{j\geq0}\begin{pmatrix}m\\j\end{pmatrix}\mathcal{Y}(a(j)u,z)z^{m-j}w\rangle\\
 &=(-1)^{|u||a|+1}\langle u^\prime,\mathcal{Y}(a(0)u,z)z^{m}w\rangle\\
 &=\langle u^\prime,(-1)^{|u||a|+1}z^{\mbox{deg}(a(m))}\mathcal{Y}(\pi(a(m))u,z)w\rangle.
\end{align*}
Hence for any $u^\prime\in U^*, a\in U(N_-), u\in U(\mathfrak{g})U\subset L(-\frac{1}{2},U), w\in V_{\widehat{\mathfrak{g}}}(-\frac{1}{2},0)$ we have
$$ \langle u^\prime,\mathcal{Y}(u,z)aw\rangle=\langle u^\prime,\xi z^{\mbox{deg}(a)}\mathcal{Y}(\sigma\pi(a)u,z)w\rangle,$$
where $\xi=1$ or $-1$.
Let $a=2e_1(-1)e_2(-1)+2h_-(-1)e_{12}(-1)-e_{12}(-2)$, then $v_1=a\textbf{1}$.
Hence $L(-\frac{1}{2},U)$ is a weak $L_{\widehat{\mathfrak{g}}}(-\frac{1}{2},0)$-module if and only if
$\langle u^\prime,\mathcal{Y}(\sigma\pi(a)u,z)\textbf{1}\rangle=0$ for any $u^\prime\in U^*, u\in U(\mathfrak{g})U\subset L(-\frac{1}{2},U)$.
Note that $$\mathcal{Y}(\sigma\pi(a)u,z)\textbf{1}=e^{zL(-1)}Y(\textbf{1},-z)\sigma\pi(a)u=e^{zL(-1)}\sigma\pi(a)u,$$
thus for any $u^\prime\in U^*, u\in U(\mathfrak{g})U\subset L(-\frac{1}{2},U)$, we have
\begin{equation*}
\langle u^\prime,\mathcal{Y}(\sigma\pi(a)u,z)\textbf{1}\rangle=\langle u^\prime,e^{zL(-1)}\sigma\pi(a)u\rangle=\langle u^\prime,\sigma\pi(a)u\rangle.
\end{equation*}
Since $\sigma\pi(a)=-2e_2e_1+2e_{12}h_-+e_{12}$,
we have $L(-\frac{1}{2},U)$ is a weak $L_{\widehat{\mathfrak{g}}}(-\frac{1}{2},0)$-module if and only if for any $u^\prime\in U^*$,
$\langle u^\prime, -2e_2e_1+2e_{12}h_-+e_{12}U(\mathfrak{g})U\rangle=0$.
From the grading restriction on the bilinear pair, it is equivalent to
$(-2e_2e_1+2e_{12}h_-+e_{12})f_1f_2U=0$ and $(-2e_2e_1+2e_{12}h_-+e_{12})f_2f_1U=0$.
Since $\mathfrak{n}_+U=0$, we have
$L(-\frac{1}{2},U)$ is a weak $L_{\widehat{\mathfrak{g}}}(-\frac{1}{2},0)$-module if and only if
$p_1(h_1,h_2)U=0$ and $p_2(h_1,h_2)U=0$, where $p_1(h_1,h_2)=h_1(2h_1-4h_2+1),~p_2(h_1,h_2)=h_2(2h_2-4h_1+1)$.
\end{proof}

Let $\Omega(M)=\{v\in M\mid (\mathfrak{g}\otimes t\mathbb{C}[t]).v=0\}$
for any weak $L_{\widehat{\mathfrak{g}}}(-\frac{1}{2},0)$-module $M$.

\begin{prop}\label{prop11}
Let $M$ be a weak $L_{\widehat{\mathfrak{g}}}(-\frac{1}{2},0)$-module belonging to $\mathcal{C}_{-\frac{1}{2}}$, then $M$ has a highest weight vector.
\end{prop}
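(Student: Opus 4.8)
The plan is to produce a single nonzero vector annihilated by the whole positive part $N_+=\mathfrak n_+\oplus\mathfrak g\otimes t\mathbb C[t]$, and then to arrange that it be a weight vector. The two inputs I will use are: first, that $M$, being a weak $L_{\widehat{\mathfrak g}}(-\tfrac12,0)$-module, is automatically a \emph{restricted} $\tilde{\mathfrak g}$-module by axiom (1) of a weak module, so that for each $w\in M$ there is an $N$ with $a(n)w=0$ for all $a\in\mathfrak g$ and $n\ge N$; and second, the local nilpotency of $N_+$ built into $\mathcal C_{-\frac12}$.

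The crux is Step~1: to show $\Omega(M)=\{v\in M\mid(\mathfrak g\otimes t\mathbb C[t])v=0\}\neq 0$. For $0\neq w$ I would set $d(w)=\max\{n\ge 1\mid a(n)w\neq 0\text{ for some }a\in\mathfrak g\}$, which is finite by restrictedness and vanishes exactly when $w\in\Omega(M)$. Assuming $d(w)=n_0\ge1$, I would examine the finitely many top modes $\{a(n_0)\mid a\in\mathfrak g\}$ on the subspace $U\big(\mathfrak g(n_0)\big)w$. The key observation is that any mode of $t$-degree $>n_0$ kills $w$, and since commuting such a mode past the factors $a(n_0)$ only raises its $t$-degree, it kills all of $U(\mathfrak g(n_0))w$. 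Hence the super-brackets $[a(n_0),b(n_0)]=[a,b](2n_0)$ (with no central term, since $2n_0\neq0$) vanish on this subspace, so the operators $a(n_0)$ pairwise super-commute there and each odd one squares to zero. As each $a(n_0)$ lies in $N_+$ and thus acts locally nilpotently, a finite family of pairwise super-commuting locally nilpotent operators has a common kernel vector; this yields $0\neq u\in U(\mathfrak g(n_0))w$ with $\mathfrak g(n_0)u=0$ and with all higher modes already vanishing, so $d(u)<n_0$. Iterating this well-founded descent terminates in a nonzero element of $\Omega(M)$.

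Step~2 is then routine. The space $\Omega(M)$ is stable under $\mathfrak g=\mathfrak g\otimes t^0$, because $\mathfrak g(0)$ normalizes $\mathfrak g\otimes t\mathbb C[t]$: for $v\in\Omega(M)$ one has $a(n)x(0)v=x(0)a(n)v+[a,x](n)v=0$ for $n\ge1$. On the $\mathfrak g$-module $\Omega(M)$ the finite-dimensional nilpotent subalgebra $\mathfrak n_+=\mathrm{span}\{e_1,e_2,e_{12}\}$ acts locally nilpotently, so for a nonzero $u$ the space $U(\mathfrak n_+)u$ is finite-dimensional, and the super-version of Engel's theorem gives $0\neq v\in\Omega(M)$ with $\mathfrak n_+v=0$; thus $N_+v=0$. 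Finally, since $\mathfrak h$ normalizes $N_+$ and preserves the space of $N_+$-annihilated vectors, I would cut down to a simultaneous $\mathfrak h$-eigenvector (using semisimplicity of $\mathfrak h$ on objects of $\mathcal O$, or, for general $\mathcal C$, Lie's theorem on a suitable finite-dimensional $\mathfrak b$-invariant subspace) to obtain a genuine highest weight vector.

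I expect the main obstacle to be Step~1: because $N_+$ and even its loop part are infinite-dimensional, Engel's theorem cannot be applied directly, and the entire force of the argument lies in combining restrictedness with local nilpotency into a strictly decreasing integer-valued invariant. The delicate point to verify carefully is that passing to the common kernel of the top modes $\mathfrak g(n_0)$ genuinely lowers $d$, which rests on the commutators $[a,b](2n_0)$ being forced into $t$-degree $>n_0$ and therefore annihilating $U(\mathfrak g(n_0))w$ by restrictedness.
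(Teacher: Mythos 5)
Your Steps 1 and 2 are sound and are essentially the paper's route: the paper disposes of $\Omega(M)\neq 0$ by citing the proofs of Proposition 3.6 of \cite{Lin} and Theorem 3.7 of \cite{DLM1}, which run exactly the descent on the top mode degree that you spell out (your verification that the brackets $[a,b](2n_0)$ land in $t$-degree $>n_0$ and kill $U(\mathfrak{g}(n_0))w$, so that the top modes super-commute there, is the correct key point), and it then applies the super Engel theorem to $U(\mathfrak{n}_+)v$ just as you do. The genuine gap is your Step 3. Objects of $\mathcal{C}_{-\frac{1}{2}}$ come with \emph{no} semisimplicity hypothesis on $\mathfrak{h}$ (that is only available in $\mathcal{O}$, and $M$ is not assumed to lie there), and your fallback --- ``Lie's theorem on a suitable finite-dimensional $\mathfrak{b}$-invariant subspace'' --- has no subspace to act on: since $\mathfrak{n}_+$ and all positive modes kill $w$, the $\mathfrak{b}$-submodule generated by $w$ is $U(\mathfrak{h})w=\mathbb{C}[h_1,h_2]w$, which is a priori infinite-dimensional, and an abelian Lie algebra on an infinite-dimensional space may have no eigenvector at all. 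This is not a technicality: nothing in your argument up to this point uses that $M$ is a module for the \emph{simple} quotient rather than for $V_{\widehat{\mathfrak{g}}}(-\frac{1}{2},0)$, and for the universal algebra the deduction fails. For instance, take $U=\mathbb{C}[s]$ with $h_1$ acting as multiplication by $s$, $h_2$ and $\mathfrak{n}_+$ acting as zero; the module $L(-\frac{1}{2},U)$ is a weak $V_{\widehat{\mathfrak{g}}}(-\frac{1}{2},0)$-module on which $N_+$ acts locally nilpotently, the vector $w=1\in U$ is killed by $N_+$, yet $U(\mathfrak{h})w=\mathbb{C}[s]$ contains no $\mathfrak{h}$-eigenvector, so no soft argument starting from your $w$ can produce a highest weight vector.

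What the paper does at this point is precisely to bring in the singular vector. Setting $U=U(\mathfrak{h})w$, the weak submodule $W\subseteq M$ generated by $U$ is a quotient of $M(-\frac{1}{2},U)$, whence $L(-\frac{1}{2},U)$ is a quotient of $W$ and is therefore a weak $L_{\widehat{\mathfrak{g}}}(-\frac{1}{2},0)$-module; Proposition \ref{propmodule} (which encodes the singular vector $v_1$) then forces $p_1(h_1,h_2)U=p_2(h_1,h_2)U=0$ with $p_1=h_1(2h_1-4h_2+1)$, $p_2=h_2(2h_2-4h_1+1)$. Thus $U$ is a module over $\mathbb{C}[h_1,h_2]\big/\langle p_1,p_2\rangle$, and since $\langle p_1,p_2\rangle$ is the intersection of the four pairwise coprime maximal ideals at $(0,0)$, $(0,-\frac{1}{2})$, $(-\frac{1}{2},0)$, $(\frac{1}{2},\frac{1}{2})$, this ring is a product of four fields, so $\mathfrak{h}$ acts semisimply on $U$ and $w$ is a (sum of) highest weight vector(s). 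In short: the semisimplicity of the $\mathfrak{h}$-action on the $N_+$-invariants is a consequence of the relations coming from the singular vector of $V(-\frac{1}{2},\mathbb{C})$, not of membership in $\mathcal{C}_{-\frac{1}{2}}$, and your proof must invoke Proposition \ref{propmodule} (or an equivalent use of $v_1$) to close Step 3.
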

\begin{proof}
Similar to the proof of Proposition 3.6 of \cite{Lin} or that of the Theorem 3.7 of \cite{DLM1}, we have $\Omega(M)\ne0$.
Since $M$ is an object of $\mathcal{C}_{-\frac{1}{2}}$,
then $U(\mathfrak{n}_+)v$ is finite dimensional for any $v\in\Omega(M)$.
Since $\mathfrak{n}_+$ is a nilpotent Lie superalgebra,
by the Engel theorem for Lie superalgebra (cf. \cite{Kac}), there exists a nonzero $w\in U(\mathfrak{n}_+)v$ such that $\mathfrak{n}_+.w=0$.
Since $\mathfrak{n}_+.\Omega(M)\subseteq\Omega(M)$, we have $w\in\Omega(M)$, it implies $N_+.w=0$.
Furthermore,
let $U=U(\mathfrak{h})w$ and $W$ be the weak $L_{\widehat{\mathfrak{g}}}(-\frac{1}{2},0)$-submodule generated by $U$.
Thus $W$ is isomorphic to a quotient module of $M(-\frac{1}{2},U)$ as $\tilde{\mathfrak{g}}$-module,
it follows that $L(-\frac{1}{2},U)$ is a quotient of $W$ as weak $V_{\widehat{\mathfrak{g}}}(-\frac{1}{2},0)$-module.
Therefore $L(-\frac{1}{2},U)$ is a weak $L_{\widehat{\mathfrak{g}}}(-\frac{1}{2},0)$-module.
From the Proposition \ref{propmodule}, we have $p_1(h_1,h_2)U=0$ and $p_2(h_1,h_2)U=0$.
Thus $U$ is a $\mathbb{C}[h_1,h_2]\Big/\langle p_1(h_1,h_2),p_2(h_1,h_2)\rangle$-module.
Let $I_1$ be the ideal of $\mathbb{C}[h_1,h_2]$ generated by $h_1,h_2$, i.e.,
$I_1=\langle h_1,h_2\rangle$, let $I_2=\langle h_1,h_2+\frac{1}{2}\rangle,I_3=\langle h_1+\frac{1}{2},h_2\rangle,I_4=\langle h_1-\frac{1}{2},h_2-\frac{1}{2}\rangle$,
it is clear that $I_i,I_j$ are coprime whenever $i\ne j$, thus $\langle p_1(h_1,h_2),p_2(h_1,h_2)\rangle=\prod_{i=1}^{4}I_i=\bigcap_{i=1}^{4}I_i$.
Hence
\begin{equation*}
\mathbb{C}[h_1,h_2]\Big/\langle p_1(h_1,h_2),p_2(h_1,h_2)\rangle\cong\prod_{i=1}^{4}\mathbb{C}[h_1,h_2]\big/I_i,
\end{equation*}
then $U=\bigoplus_{i=1}^{4}U_i$, where $U_i$ is a $\mathbb{C}[h_1,h_2]\big/I_i$-module, i.e., $I_iU_i=0$.
Therefore $\mathfrak{h}$ acts semisimply on $U$, i.e., $w$ is a highest weight vector.
\end{proof}

Now we show that the category $\mathcal{C}_{-\frac{1}{2}}$ is semisimple and has finitely many irreducible modules.

\begin{thm}\label{thmc12}
The set $\{L(-\frac{1}{2},\lambda)|\lambda=0,-\frac{1}{2}\Lambda_1,-\frac{1}{2}\Lambda_2,\frac{1}{2}\Lambda_1+\frac{1}{2}\Lambda_2\}$
provides the complete list of irreducible modules in the category $\mathcal{C}_{-\frac{1}{2}}$.
Moreover, the category $\mathcal{C}_{-\frac{1}{2}}$ is semisimple.
\end{thm}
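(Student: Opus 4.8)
The plan is to split the statement into a classification of the simple objects and a separate semisimplicity argument, reusing the machinery already built for $\mathcal{O}_{-\frac12}$. For the classification, I would start from Proposition \ref{prop11}: any nonzero $M\in\mathcal{C}_{-\frac12}$ contains a highest weight vector $w$ with $N_+w=0$ on which $\mathfrak{h}$ acts semisimply, and the cyclic space $U=U(\mathfrak{h})w$ satisfies $p_1(h_1,h_2)U=p_2(h_1,h_2)U=0$ by Proposition \ref{propmodule}. Hence the weight $\lambda$ of $w$ is a common zero of $p_1,p_2$, which by the computation in Proposition \ref{prop234} forces $\lambda\in\{0,-\frac12\Lambda_1,-\frac12\Lambda_2,\frac12\Lambda_1+\frac12\Lambda_2\}$. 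If $M$ is irreducible it is generated by $w$, hence is a highest weight module and therefore isomorphic to $L(-\frac12,\lambda)$ for one of these four weights. Conversely each such $L(-\frac12,\lambda)$ already lies in $\mathcal{O}_{-\frac12}\subseteq\mathcal{C}_{-\frac12}$, so the list is exactly the four modules claimed.

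For semisimplicity I would follow the pattern of the proof of Theorem \ref{thm1}, but now in the larger category. The goal is to establish $\mathrm{Ext}^1_{\mathcal{C}_{-\frac12}}(L,L')=0$ for every pair $L,L'$ of the four simples and then invoke a finiteness criterion (Lemma 1.3.1 of \cite{GK}) to deduce that $\mathcal{C}_{-\frac12}$ is semisimple. Since all four simples are admissible $\widehat{\mathfrak{g}}$-modules of level $-\frac12$, the vanishing of $\mathrm{Ext}^1_{\widehat{\mathfrak{g}}}(L,L')$ for distinct $L,L'$ is supplied by Lemma 2.3.3 of \cite{GK}, and the vanishing of the self-extensions $\mathrm{Ext}^1(L,L)$ by Lemma 4.1 of \cite{AMP}. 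Concretely, I would run a socle-exhaustion argument: let $S$ be the sum of all simple submodules of $M$; if $S\neq M$, then $M/S\in\mathcal{C}_{-\frac12}$ is nonzero and again has a highest weight vector by Proposition \ref{prop11}, producing, together with the Ext-vanishing, a splitting that enlarges $S$, a contradiction.

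The main obstacle is that the cited Ext-computations are naturally phrased in category $\mathcal{O}$, whereas $\mathcal{C}_{-\frac12}$ is strictly larger: it permits objects with infinite weight multiplicities and, a priori, with non-semisimple action of $\widehat{\mathfrak{h}}$ (Jordan blocks in $\mathfrak{h}$ or $L(0)$). The crux is therefore to show that any extension $E\in\mathcal{C}_{-\frac12}$ of two of the four simples in fact lies in $\mathcal{O}_{-\frac12}$, so that Theorem \ref{thm1} applies and forces the sequence to split. Here I expect the decisive input to be the interplay between the local nilpotency of $N_+$ and the explicit relations $p_1(h_1,h_2)=p_2(h_1,h_2)=0$: an extension of two simples has only finitely many $\mathfrak{h}$-weights with finite-dimensional generalized weight spaces, and one must upgrade each generalized weight space to a genuine weight space by showing, via Proposition \ref{prop11} applied to $E$ and its subquotients together with Engel's theorem for $\mathfrak{n}_+$, that $E$ is generated over $U(N_-)$ by honest highest weight vectors and hence carries a semisimple $\widehat{\mathfrak{h}}$-action. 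Once this reduction to $\mathcal{O}_{-\frac12}$ is in place, the finitely many simples and the vanishing of all $\mathrm{Ext}^1$ close the argument exactly as in the $\mathcal{O}_{-\frac12}$ case.
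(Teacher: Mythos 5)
Your proposal follows essentially the paper's route, with one difference in packaging and one missing ingredient worth naming. The classification half is identical to the paper's: Proposition \ref{prop11} produces a highest weight vector, Proposition \ref{propmodule} forces its weight to be a common zero of $p_1,p_2$, and irreducibility makes $M$ a highest weight module, hence one of the four $L(-\frac{1}{2},\lambda)$. For semisimplicity, the paper never computes $\mathrm{Ext}^1$ in $\mathcal{C}_{-\frac{1}{2}}$ and never applies Lemma 1.3.1 of \cite{GK} to that category (objects of $\mathcal{C}_{-\frac{1}{2}}$ need not have finite length, so that criterion is not directly available there; the paper uses it, together with the Ext-vanishing of \cite{GK} and \cite{AMP}, only inside $\mathcal{O}_{-\frac{1}{2}}$ to prove Theorem \ref{thm1}). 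Instead it runs exactly your socle exhaustion but closes it with a concrete reduction your sketch omits: since $N_+$ is finitely generated as a Lie superalgebra, the lifted vector $w'$ satisfies $N_+w'\subseteq L(-\frac{1}{2},\lambda_1)\oplus\cdots\oplus L(-\frac{1}{2},\lambda_s)$ for a \emph{finite} family of simple submodules, and Theorem \ref{thm1} is then applied to the submodule generated by $w'$ and this finite sum. This finiteness step matters for your version too: the socle $S$ may be an infinite direct sum, and $\mathrm{Ext}^1(L,L_i)=0$ for each $i$ does not formally give $\mathrm{Ext}^1\bigl(L,\bigoplus_{i\in I}L_i\bigr)=0$ for infinite $I$, so your ``splitting that enlarges $S$'' needs precisely this pullback to a finite sub-sum. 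On the other hand, the crux you correctly identify --- that an extension in $\mathcal{C}_{-\frac{1}{2}}$ could a priori carry a non-semisimple Cartan action and must be pushed into $\mathcal{O}_{-\frac{1}{2}}$ --- is resolved in the paper by exactly the mechanism you anticipate: in the proof of Proposition \ref{prop11}, $\langle p_1,p_2\rangle$ is the intersection of the four pairwise coprime maximal ideals $I_1,\dots,I_4$, so $\mathbb{C}[h_1,h_2]/\langle p_1,p_2\rangle$ is a product of fields and $\mathfrak{h}$ acts semisimply on $U(\mathfrak{h})w$ for any singular vector $w$; this is what converts generalized weight vectors into honest highest weight vectors (and the paper's own write-up is similarly terse at the point where Theorem \ref{thm1} is invoked on the submodule generated by $w'$). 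In sum: your strategy is a correct variant of the paper's proof once the finite generation of $N_+$ is inserted; without it, the Ext-theoretic phrasing has a gap when the socle is an infinite sum.
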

\begin{proof}
Let $M$ be an irreducible weak $L_{\widehat{\mathfrak{g}}}(-\frac{1}{2},0)$-module belonging to $\mathcal{C}_{-\frac{1}{2}}$,
then by the Proposition \ref{prop11}, $M$ contains a highest weight vector $w$.
Since $M$ is an irreducible weak $L_{\widehat{\mathfrak{g}}}(-\frac{1}{2},0)$-module,
we have that $M$ is an irreducible weak $V_{\widehat{\mathfrak{g}}}(-\frac{1}{2},0)$-module,
thus $M$ is an irreducible $\tilde{\mathfrak{g}}$-module generated by $w$.
It shows that $M$ is an irreducible highest weight module of $\tilde{\mathfrak{g}}$.
From the Proposition \ref{propmodule}, $M$ is the module $L(-\frac{1}{2},0)$, $L(-\frac{1}{2},-\frac{1}{2}\Lambda_1)$, $L(-\frac{1}{2},-\frac{1}{2}\Lambda_2)$ or $L(-\frac{1}{2},\frac{1}{2}\Lambda_1+\frac{1}{2}\Lambda_2)$.

Now we prove that the category $\mathcal{C}_{-\frac{1}{2}}$ is semisimple.
Let $M$ be a weak $L_{\widehat{\mathfrak{g}}}(-\frac{1}{2},0)$-module belonging to $\mathcal{C}_{-\frac{1}{2}}$.
From the Proposition \ref{prop11}, $M$ contains a highest weight vector $w$.
Let $W$ be the weak $L_{\widehat{\mathfrak{g}}}(-\frac{1}{2},0)$-submodule generated by $w$,
then $W$ is a quotient of certain Verma module $M(-\frac{1}{2},\lambda^\prime)$ as $\tilde{\mathfrak{g}}$-module.
Hence $W$ is an object in $\mathcal{O}_{-\frac{1}{2}}$.
From the Theorem \ref{thm1}, it is completely reducible, i.e., $W=\bigoplus_\lambda L(-\frac{1}{2},\lambda)$.
But the highest weight subspace of $M(-\frac{1}{2},\lambda^\prime)$ is one dimensional,
then $W=L(-\frac{1}{2},\lambda^\prime)$.
Let $W^\prime$ be the sum of irreducible weak $L_{\widehat{\mathfrak{g}}}(-\frac{1}{2},0)$-submodule of $M$.
If $W^\prime$ is a proper submodule of $M$,
then $M/W^\prime$ is a weak $L_{\widehat{\mathfrak{g}}}(-\frac{1}{2},0)$-module belonging to $\mathcal{C}_{-\frac{1}{2}}$,
thus it contains a highest weight vector $\bar{w^\prime}$ by the Proposition \ref{prop11}.
Let $w^\prime$ be a preimage of $\bar{w^\prime}$, thus $N_+.w^\prime\subseteq W^\prime$.
Since $N_+$ is finitely generated,
there exist submodules $L(-\frac{1}{2},\lambda_1), L(-\frac{1}{2},\lambda_2),\cdots, L(-\frac{1}{2},\lambda_s)$ of $W^\prime$
such that $$N_+.w^\prime\subseteq L(-\frac{1}{2},\lambda_1)\oplus L(-\frac{1}{2},\lambda_2)\oplus\cdots\oplus L(-\frac{1}{2},\lambda_s).$$
From the Theorem \ref{thm1},
the submodule of $M$ generated by $w^\prime$ and
$L(-\frac{1}{2},\lambda_1)\oplus L(-\frac{1}{2},\lambda_2)\oplus\cdots\oplus L(-\frac{1}{2},\lambda_s)$ is completely reducible.
Thus the submodule of $M$ generated by $w^\prime$ is a direct sum of certain $L(-\frac{1}{2},\lambda)$,
it is a contradiction. Therefore $W^\prime=M$, i.e., $M$ is completely reducible,
i.e., the category $\mathcal{C}_{-\frac{1}{2}}$ is semisimple.
\end{proof}
\section{$\mathbb{Q}$-graded vertex operator superalgebras}
\label{sec:4}
	\def\theequation{4.\arabic{equation}}
	\setcounter{equation}{0}

In this section,
let $0<\xi<1$ be a rational number,
we prove that $(L_{\widehat{\mathfrak{g}}}(-\frac{1}{2},0),\omega_\xi)$ is a rational $\mathbb{Q}$-graded vertex operator superalgebra.
Then we determine the Zhu's algebra
$A_{\omega_\xi}(L_{\widehat{\mathfrak{g}}}(-\frac{1}{2},0))$
of $(L_{\widehat{\mathfrak{g}}}(-\frac{1}{2},0),\omega_\xi)$ and prove that
$A_{\omega_\xi}(L_{\widehat{\mathfrak{g}}}(-\frac{1}{2},0))$ is a finite-dimensional semisimple associate superalgebra.
Finally, we show that $(L_{\widehat{\mathfrak{g}}}(-\frac{1}{2},0),\omega_\xi)$ is $C_2$-cofinite.

\begin{thm}\label{thm22}
Let $0<\xi<1$ be a rational number, then $(L_{\widehat{\mathfrak{g}}}(-\frac{1}{2},0),\omega_\xi)$ is a rational $\mathbb{Q}$-graded vertex operator superalgebra.
\end{thm}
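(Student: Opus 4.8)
The plan is to reduce the rationality of $(L_{\widehat{\mathfrak{g}}}(-\frac{1}{2},0),\omega_\xi)$ to the semisimplicity of the category $\mathcal{C}_{-\frac{1}{2}}$ already obtained in Theorem \ref{thmc12}. The key observation is that the passage from the $\mathbb{Z}$-grading (by $L(0)$) to the $\mathbb{Q}$-grading (by $L'(0)$) converts the lower-boundedness of a $\mathbb{Q}_+$-graded weak module into local nilpotence of $N_+$, so that every such module automatically lands in $\mathcal{C}_{-\frac{1}{2}}$. Once this bridge is in place, complete reducibility follows at once.

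First I would record the $\omega_\xi$-degree shifts. Since $h=h_+$ satisfies the conditions (\ref{eql}), we have $L'(0)=L(0)-\frac{\xi}{2}h_+(0)$. For a root or Cartan vector $a\in\mathfrak{g}$ with $[h_+,a]=\mu a$, the element $a(-1)\mathbf{1}$ lies in $V_{(1,\mu)}$, so by the formula for $V'_{(m)}$ its $\omega_\xi$-weight is $1-\frac{\xi}{2}\mu$; consequently the mode $a(n)=(a(-1)\mathbf{1})_n$ shifts the $\omega_\xi$-degree by $\left(1-\frac{\xi}{2}\mu\right)-n-1=-n-\frac{\xi}{2}\mu$. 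On $\mathfrak{g}$ the eigenvalues of $h_+$ are $\mu\in\{-2,-1,0,1,2\}$, with $\mu=1,1,2$ for $e_1,e_2,e_{12}$. Now $N_+$ is spanned by the $a(n)$ with either $n\geq 1$ (and $a\in\mathfrak{g}$ arbitrary) or $n=0$ and $a\in\mathfrak{n}_+$. In the first case the shift satisfies $-n-\frac{\xi}{2}\mu\leq -1+\xi<0$ since $\mu\geq -2$ and $\xi<1$; in the second case $\mu\in\{1,2\}$ gives shift $-\frac{\xi}{2}\mu<0$ since $\xi>0$. Hence every homogeneous generator of $N_+$ strictly lowers the $\omega_\xi$-degree, by at least $\delta_0:=\min\{\frac{\xi}{2},\,1-\xi\}>0$.

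Next comes the decisive step. Let $M=\bigoplus_{n\in\mathbb{Q}_+}M(n)$ be an arbitrary $\mathbb{Q}_+$-graded weak $(L_{\widehat{\mathfrak{g}}}(-\frac{1}{2},0),\omega_\xi)$-module, so that $M(s)=0$ for $s<0$. For homogeneous $v\in M(r)$ and any $a\in N_+$, iterating the degree estimate gives $a^k v\in\bigoplus_{s\leq r-k\delta_0}M(s)$, whence $a^k v=0$ as soon as $k\delta_0>r$. Thus $N_+$ acts locally nilpotently on $M$, i.e. $M$ is an object of $\mathcal{C}_{-\frac{1}{2}}$. By Theorem \ref{thmc12} the category $\mathcal{C}_{-\frac{1}{2}}$ is semisimple, so $M$ decomposes as a direct sum of irreducible weak submodules, each isomorphic to one of $L(-\frac{1}{2},\lambda)$ with $\lambda\in\{0,-\frac{1}{2}\Lambda_1,-\frac{1}{2}\Lambda_2,\frac{1}{2}\Lambda_1+\frac{1}{2}\Lambda_2\}$. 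Each such summand is a highest weight module on which $\widehat{\mathfrak{h}}$ acts semisimply, so $L'(0)=L(0)-\frac{\xi}{2}h_+(0)$ acts semisimply; moreover within the $L(0)$-level $h_\lambda+k$ the operator $h_+(0)$ differs from $\lambda(h_+)$ by at most $2k$, so $L'(0)\geq h_\lambda+k(1-\xi)-\frac{\xi}{2}|\lambda(h_+)|$ is bounded below because $\xi<1$. Hence each summand is an irreducible $\mathbb{Q}_+$-graded weak module, and $M$ is a direct sum of such, proving rationality.

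I expect the main obstacle to be \emph{conceptual rather than computational}: recognizing that the new Virasoro element makes every $\mathbb{Q}_+$-graded weak module an object of $\mathcal{C}_{-\frac{1}{2}}$, which is precisely where the heavy lifting (semisimplicity of $\mathcal{C}_{-\frac{1}{2}}$, itself resting on Lemma \ref{lemsv00}, Proposition \ref{propmodule} and Proposition \ref{prop11}) has already been done. The one point demanding genuine care is the sign bookkeeping guaranteeing that \emph{all} of $N_+$—including the horizontal $\mathfrak{n}_+$ sitting at $n=0$—strictly lowers the $\omega_\xi$-degree; this is exactly where the hypothesis $0<\xi<1$ is indispensable, since for $\xi\geq 1$ some mode $a(1)$ with $\mu=2$ would cease to be degree-lowering and the local nilpotence argument would break down.
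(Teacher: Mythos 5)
Your proposal is correct and follows essentially the same route as the paper: both prove that the degree shifts of the modes generating $N_+$ under the $L'(0)$-grading are strictly negative for $0<\xi<1$, so every $\mathbb{Q}_+$-graded weak module lies in $\mathcal{C}_{-\frac{1}{2}}$, then invoke Theorem \ref{thmc12} and check that each simple summand $L(-\frac{1}{2},\lambda)$ is itself $\mathbb{Q}_+$-gradable. One cosmetic slip: within $L(0)$-level $k$ the eigenvalues of $h_+(0)$ need not lie within $2k$ of $\lambda(h_+)$ in the downward direction (powers of $f_{12}(0)$ lower them arbitrarily at level $0$), but since your lower bound for $L'(0)$ only uses the upper estimate $h_+(0)\leq\lambda(h_+)+2k$, which does hold, the argument stands.
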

\begin{proof}
Let $M$ be a $\mathbb{Q}_+$-graded weak $(L_{\widehat{\mathfrak{g}}}(-\frac{1}{2},0),\omega_\xi)$-module,
we denote by $\mbox{deg}~w$ the degree of homogeneous element $w\in M$.
For any $\alpha\in \Delta_{\mathfrak{g}}^+$ and the corresponding root vector $x_\alpha$,
 $$L^\prime(0)x_\alpha=L(0)x_\alpha-\frac{\xi}{2}h_0x_\alpha=(1-\frac{\xi}{2}\alpha(h))x_\alpha.$$
 Since $0<\xi<1$, we have
$\mbox{deg}~x_\alpha w=\mbox{wt}~x_\alpha+\mbox{deg}~w-1<\mbox{deg}~w$ for all $w\in M$,
it implies that $a$ act locally nilpotently on $M$ for all $a\in\mathfrak{n}_+$.
For any $a(n) \in\mathfrak{g}\otimes t\mathbb{C}[t]$, we have
$\mbox{deg}~a(n)w=\mbox{deg}~w+\mbox{wt}~a-n-1<\mbox{deg}~w$
for all $w\in M$, thus $a(n)$ acts locally nilpotently on $M$.
Therefore $a$ acts locally nilpotently on $M$ for all $a\in N_+$, i.e., $M$ is an object of $\mathcal{C}_{-\frac{1}{2}}$.
Thus $M$ is a direct sum of certain simple objects of $\mathcal{C}_{-\frac{1}{2}}$ by the Theorem \ref{thmc12}.
Similar to the proof that $(L_{\widehat{\mathfrak{g}}}(-\frac{1}{2},0),\omega_\xi)$ is a $\mathbb{Q}$-graded vertex operator superalgebra,
the simple module $L(-\frac{1}{2},\lambda)$ of $\mathcal{C}_{-\frac{1}{2}}$ is an ordinary $(L_{\widehat{\mathfrak{g}}}(-\frac{1}{2},0),\omega_\xi)$-module,
hence $L(-\frac{1}{2},\lambda)$ is a $\mathbb{Q}_+$-graded weak
$(L_{\widehat{\mathfrak{g}}}(-\frac{1}{2},0),\omega_\xi)$-module.
Therefore, $(L_{\widehat{\mathfrak{g}}}(-\frac{1}{2},0),\omega_\xi)$ is a rational $\mathbb{Q}$-graded vertex operator superalgebra.
\end{proof}

Denote by $A_{\omega_\xi}(V_{\widehat{\mathfrak{g}}}(\mathcal{k},0))$ (resp. $A_{\omega_\xi}(L_{\widehat{\mathfrak{g}}}(\mathcal{k},0))$)
the Zhu's algebra of $\mathbb{Q}$-graded vertex operator superalgebra $(V_{\widehat{\mathfrak{g}}}(\mathcal{k},0),\omega_\xi)$
(resp. ($L_{\widehat{\mathfrak{g}}}(\mathcal{k},0),\omega_\xi)$).

\begin{prop}\label{propav}
The associative superalgebra $A_{\omega_\xi}(V_{\widehat{\mathfrak{g}}}(\mathcal{k},0))$ is isomorphic to the polynomial algebra $\mathbb{C}[t_1,t_2]$.
\end{prop}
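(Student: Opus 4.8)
The plan is to construct an explicit isomorphism $\mathbb{C}[t_1,t_2]\to A_{\omega_\xi}(V_{\widehat{\mathfrak{g}}}(\mathcal{k},0))$ sending $t_i\mapsto\overline{h_i(-1)\mathbf{1}}$, following the method used for $\widehat{sl_2}$ in \cite{DLM2} and for arbitrary $\mathfrak{g}$ in \cite{Lin}. First I would record, for each vertex-algebra generator $a(-1)\mathbf{1}$ with $a\in\mathfrak{g}$, its $L^\prime(0)$-weight and its value of $\varepsilon$. Since $[h_+,e_1]=e_1$, $[h_+,e_2]=e_2$, $[h_+,e_{12}]=2e_{12}$, with the opposite signs on the $f$'s, and $[h_+,\mathfrak{h}]=0$, an element $a(-1)\mathbf{1}$ with $h_+(0)a=ta$ has $L^\prime(0)$-weight $1-\tfrac{\xi}{2}t$. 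Because $0<\xi<1$, the two Cartan generators $h_1(-1)\mathbf{1},h_2(-1)\mathbf{1}$ have integer weight $1$ and $\varepsilon=1$, whereas every root-vector generator has non-integer weight and $\varepsilon=0$.

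Next I would show that every root-vector generator lies in $O(V)$. For a positive root vector $a(-1)\mathbf{1}$ one has $[\mbox{wt}~a(-1)\mathbf{1}]=0$, so $\mbox{Res}_z z^{-1}Y(a(-1)\mathbf{1},z)\mathbf{1}=a(-1)\mathbf{1}\in O(V)$; for a negative root vector $[\mbox{wt}~a(-1)\mathbf{1}]=1$, and $\mbox{Res}_z(1+z)z^{-1}Y(a(-1)\mathbf{1},z)\mathbf{1}=a(-1)\mathbf{1}\in O(V)$ after using $a(0)\mathbf{1}=0$. Hence $\overline{a(-1)\mathbf{1}}=0$ for all six root vectors, and the only surviving generators are $\overline{h_1(-1)\mathbf{1}},\overline{h_2(-1)\mathbf{1}}$. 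To conclude that these two classes generate $A_{\omega_\xi}(V_{\widehat{\mathfrak{g}}}(\mathcal{k},0))$ as an algebra, I would run the reduction based on $\mbox{Res}_z\frac{(1+z)^{[\mbox{wt}~a]+m}}{z^{1+\varepsilon(a)+n}}Y(a,z)b\in O(V)$ for $n\ge m\ge0$: combined with a PBW basis of $V_{\widehat{\mathfrak{g}}}(\mathcal{k},0)$, every monomial $x_1(-n_1)\cdots x_r(-n_r)\mathbf{1}$ is congruent modulo $O(V)$ to a $\ast$-product of the classes $\overline{x_i(-1)\mathbf{1}}$, whose root-vector factors vanish.

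A short direct computation then yields commutativity, namely $\overline{h_1(-1)\mathbf{1}}\ast\overline{h_2(-1)\mathbf{1}}=\overline{h_1(-1)h_2(-1)\mathbf{1}}=\overline{h_2(-1)\mathbf{1}}\ast\overline{h_1(-1)\mathbf{1}}$, since $h_i(0)h_j(-1)\mathbf{1}=0$ and $[h_1(-1),h_2(-1)]\mathbf{1}=0$. Together with the spanning step this gives a surjection $\mathbb{C}[t_1,t_2]\twoheadrightarrow A_{\omega_\xi}(V_{\widehat{\mathfrak{g}}}(\mathcal{k},0))$. For injectivity I would detect the generators on modules: by Theorem \ref{thmzhu} one has $o(h_i(-1)\mathbf{1})=\varepsilon(h_i(-1)\mathbf{1})\,h_i(0)=h_i(0)$, so the highest weight vector of $M(\mathcal{k},\lambda)$ lies in $\Omega(M(\mathcal{k},\lambda))$, which is an $A_{\omega_\xi}(V_{\widehat{\mathfrak{g}}}(\mathcal{k},0))$-module, and on it $\overline{h_i(-1)\mathbf{1}}$ acts by $\lambda(h_i)$. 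As $\lambda$ ranges over $\mathfrak{h}^*$ the pair $(\lambda(h_1),\lambda(h_2))$ takes every value in $\mathbb{C}^2$, so no nonzero polynomial in $t_1,t_2$ can lie in the kernel, and the surjection is an isomorphism.

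The hard part will be the reduction/spanning step, because when one rewrites $\overline{x_1(-1)w}$ through $\overline{x_1(-1)\mathbf{1}}\ast\overline{w}$ the correction term involves $x_1(0)w$, which need not be strictly lower in the $\omega_\xi$-grading and so must be controlled by a secondary induction (on the number of non-Cartan factors and on the $\mathfrak{h}$-weight), all while tracking the super-signs coming from the odd root vectors. The vanishing of the root vectors and the commutativity of $\overline{h_1(-1)\mathbf{1}},\overline{h_2(-1)\mathbf{1}}$ are immediate; it is the bookkeeping in the spanning argument, handled as in \cite{DLM2,Lin}, where the substance of the proof lies.
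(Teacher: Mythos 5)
Your proposal is correct, and its first half (the $L^\prime(0)$-weights and $\varepsilon$-values of the generators, the vanishing of the six root-vector classes, and a DLM2/Lin-style reduction showing that $\overline{h_1(-1)\mathbf{1}},\overline{h_2(-1)\mathbf{1}}$ generate) follows the same track as the paper; where you genuinely diverge is injectivity. The paper never leaves the algebra: it pins down $O_{\omega_\xi}(V_{\widehat{\mathfrak{g}}}(\mathcal{k},0))$ exactly, proving by the induction on the spanning set $L$ that it equals $CV_{\widehat{\mathfrak{g}}}(\mathcal{k},0)$ with $C=\mathfrak{n}_-\otimes\mathbb{C}[t^{-1}](t^{-1}+1)+\mathfrak{n}_+\otimes\mathbb{C}[t^{-1}]t^{-1}+\mathfrak{h}\otimes\mathbb{C}[t^{-1}](t^{-2}+t^{-1})$, and then reads off both surjectivity and injectivity of $\psi$ at once from the PBW-type factorization $U(N_-)=U(C)U(\mathbb{C}h_1(-1))U(\mathbb{C}h_2(-1))U(\mathfrak{n}_-)$ together with $\mathfrak{n}_-\mathbf{1}=0$. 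You instead prove only the upper bound (generation by the two Cartan classes) inside the algebra and get the lower bound by evaluation on Verma modules: $v_\lambda\in\Omega(M(\mathcal{k},\lambda))$ because every element of $N_-$ strictly raises the $L^\prime(0)$-degree when $0<\xi<1$, so $v_\lambda$ spans the bottom graded piece, and $o(h_i(-1)\mathbf{1})=h_i(0)$ acts on it by $\lambda(h_i)$, with $(\lambda(h_1),\lambda(h_2))$ filling all of $\mathbb{C}^2$; this is a legitimate separation-of-points argument resting on Theorem \ref{thmzhu}(b). What each buys: your route sidesteps ever computing $O_{\omega_\xi}(V)$ exactly, which makes it more robust, while the paper's route is self-contained and produces the explicit description $O_{\omega_\xi}(V)=CV$ as a by-product. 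One caution on your spanning step: the claim that every PBW monomial is congruent modulo $O(V)$ to a $\ast$-product of the classes $\overline{x_i(-1)\mathbf{1}}$ ``whose root-vector factors vanish'' is false if read literally --- for instance $f_{12}(-1)e_{12}(-1)\mathbf{1}\equiv h_+(-1)\mathbf{1}\not\equiv 0$ modulo $O(V)$, the surviving Cartan term coming from the commutator $[f_{12}(-1),e_{12}(-1)]=-h_+(-2)$ together with $h_+(-2)\mathbf{1}\equiv -h_+(-1)\mathbf{1}$ --- so the correct statement is congruence to a polynomial in the Cartan classes. Your own caveat about correction terms and a secondary induction (which is exactly the paper's $L$-induction) absorbs precisely these terms, so the intended conclusion, and with it surjectivity, stands.
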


\begin{proof}
Since $\mbox{wt}~h_1(-1)\textbf{1}=1,~\mbox{wt}~h_2(-1)\textbf{1}=1,~\mbox{wt}~e_{12}(-1)\textbf{1}=1-\xi,~\mbox{wt}~f_{12}(-1)\textbf{1}=1+\xi, ~\mbox{wt}~e_1(-1)\textbf{1}=1-\frac{1}{2}\xi,~\mbox{wt}~f_1(-1)\textbf{1}=1+\frac{1}{2}\xi,
  ~\mbox{wt}~e_2(-1)\textbf{1}=1-\frac{1}{2}\xi,~\mbox{wt}~f_2(-1)\textbf{1}=1+\frac{1}{2}\xi$,
we have
\begin{align}\label{fyexh}
&\mbox{Res}_z\frac{(1+z)^{[{\rm wt}~f_{12}]}}{z^{m}}Y(f_{12},z)u=(f_{12}(-m)+f_{12}(1-m))u,\notag\\
&\mbox{Res}_z\frac{(1+z)^{[{\rm wt}~f_1]}}{z^{m}}Y(f_1,z)u=(f_1(-m)+f_1(1-m))u,\notag\\
&\mbox{Res}_z\frac{(1+z)^{[{\rm wt}~f_2]}}{z^{m}}Y(f_2,z)u=(f_2(-m)+f_2(1-m))u,\notag\\
&\mbox{Res}_z\frac{(1+z)^{[{\rm wt}~e_{12}]}}{z^{m}}Y(e_{12},z)u=e_{12}(-m)u,\notag\\
&\mbox{Res}_z\frac{(1+z)^{[{\rm wt}~e_1]}}{z^{m}}Y(e_1,z)u=e_1(-m)u,\\
&\mbox{Res}_z\frac{(1+z)^{[{\rm wt}~e_2]}}{z^{m}}Y(e_2,z)u=e_2(-m)u,\notag\\
&\mbox{Res}_z\frac{(1+z)^{[{\rm wt}~h_1]}}{z^{m+1}}Y(h_1,z)u=(h_1(-m-1)+h_1(-m))u,\notag\\
&\mbox{Res}_z\frac{(1+z)^{[{\rm wt}~h_2]}}{z^{m+1}}Y(h_2,z)u=(h_2(-m-1)+h_2(-m))u\notag
\end{align}
for any positive integer $m$ and $u\in V_{\widehat{\mathfrak{g}}}(\mathcal{k},0)$.
It is clear that
all those elements in (\ref{fyexh}) are in $O_{\omega_\xi}(V_{\widehat{\mathfrak{g}}}(\mathcal{k},0))$.
Let $W$ be the subspace linearly spanned by the elements in (\ref{fyexh}),
then $W=CV_{\widehat{\mathfrak{g}}}(\mathcal{k},0)$, where $C=\mathfrak{n}_-\otimes\mathbb{C}[t^{-1}](t^{-1}+1)+\mathfrak{n}_+\otimes\mathbb{C}[t^{-1}]t^{-1}
+\mathfrak{h}\otimes\mathbb{C}[t^{-1}](t^{-2}+t^{-1})$.
Let $L$ be the linear span of homogeneous elements $a$ of $(V_{\widehat{\mathfrak{g}}}(\mathcal{k},0),\omega_\xi)$
such that for any positive integer $n$,
\begin{equation*}
\mbox{Res}_z\frac{(1+z)^{[{\rm wt}~a]}}{z^{n+\varepsilon(a)}}Y(a,z)V_{\widehat{\mathfrak{g}}}(\mathcal{k},0)\subseteq W.
\end{equation*}
Let $a$ be any homogeneous element of $L$,
similar to the proof of the Proposition 4.1 of \cite{DLM2},
we have $h_1(-m)a,h_2(-m)a,e_{12}(-m)a,f_{12}(-m)a,e_1(-m)a,f_1(-m)a,e_2(-m)a,f_2(-m)a$ $\in L$ for any positive integer $m$.
Since $\textbf{1}\in L$ and $V_{\widehat{\mathfrak{g}}}(\mathcal{k},0)=U(\mathfrak{g}\otimes t^{-1}\mathbb{C}[t^{-1}])\textbf{1}$,
we have $L=V_{\widehat{\mathfrak{g}}}(\mathcal{k},0)$.
Therefore $O_{\omega_\xi}(V_{\widehat{\mathfrak{g}}}(\mathcal{k},0))=W=CV_{\widehat{\mathfrak{g}}}(\mathcal{k},0)$.

Define a linear map
\begin{equation*}
 \begin{aligned}
 \psi:~&\mathbb{C}[t_1,t_2]\rightarrow A_{\omega_\xi}(V_{\widehat{\mathfrak{g}}}(\mathcal{k},0))\\
 &g(t_1,t_2)\mapsto g(h_1(-1),h_2(-1))\textbf{1}+O_{\omega_\xi}(V_{\widehat{\mathfrak{g}}}(\mathcal{k},0))
 \end{aligned}
\end{equation*}
for any $g(t_1,t_2)\in \mathbb{C}[t_1,t_2]$.
Since $h_1(0)\textbf{1}=h_2(0)\textbf{1}=0$,
we have $g(h_1(-1),h_2(-1))\textbf{1}=g(h_1(-1)+h_1(0),h_2(-1)+h_2(0))\textbf{1}$ for any $g(t_1,t_2)\in \mathbb{C}[t_1,t_2]$.
Since $h_i(-1)\textbf{1}\ast u=(h_i(-1)+h_i(0))u$ for any $i=1,2,u\in V_{\widehat{\mathfrak{g}}}(\mathcal{k},0)$, we have
\begin{align*}
  &\psi(f(t_1,t_2)g(t_1,t_2))\\=&f(h_1(-1),h_2(-1))g(h_1(-1),h_2(-1))\textbf{1}+O_{\omega_\xi}(V_{\widehat{\mathfrak{g}}}(\mathcal{k},0))\\
  =&f(h_1(-1)+h_1(0),h_2(-1)+h_2(0))g(h_1(-1),h_2(-1))\textbf{1}+O_{\omega_\xi}(V_{\widehat{\mathfrak{g}}}(\mathcal{k},0))\\
  =&f(h_1(-1),h_2(-1))\textbf{1}\ast g(h_1(-1),h_2(-1))\textbf{1}+O_{\omega_\xi}(V_{\widehat{\mathfrak{g}}}(\mathcal{k},0))\\=&\psi(f(t_1,t_2))\ast\psi(g(t_1,t_2)),
\end{align*}
thus $\psi$ is a superalgebra homomorphism.
Note that $N_-=C\oplus\mathfrak{n}_-\oplus\mathbb{C}h_1(-1)\oplus\mathbb{C}h_2(-1)$,
we have $U(N_-)=U(C)U(\mathbb{C}h_1(-1))U(\mathbb{C}h_2(-1))U(\mathfrak{n}_-)$.
Thus
\begin{equation*}
  O_{\omega_\xi}(V_{\widehat{\mathfrak{g}}}(\mathcal{k},0))=CV_{\widehat{\mathfrak{g}}}(\mathcal{k},0)=CU(N_-)\textbf{1}\cong CU(C)U(\mathbb{C}h_1(-1))U(\mathbb{C}h_2(-1)).
\end{equation*}
Therefore $\psi$ is an isomorphism.
\end{proof}

Now we determine the Zhu's algebra $A_{\omega_\xi}(L_{\widehat{\mathfrak{g}}}(-\frac{1}{2},0))$.

\begin{thm}\label{thmav}
The associative superalgebra $A_{\omega_\xi}(L_{\widehat{\mathfrak{g}}}(-\frac{1}{2},0))$
is isomorphic to the quotient algebra $\mathbb{C}[t_1,t_2]/\langle p_1(t_1,t_2),p_2(t_1,t_2)\rangle$,
where
\begin{equation*}
p_1(t_1,t_2)=t_1(2t_1-4t_2+1),~p_2(t_1,t_2)=t_2(2t_2-4t_1+1).
\end{equation*}
Moreover, $A_{\omega_\xi}(L_{\widehat{\mathfrak{g}}}(-\frac{1}{2},0))$ is semisimple and finite-dimensional.
\end{thm}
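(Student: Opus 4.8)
The plan is to reduce the computation of $A_{\omega_\xi}(L_{\widehat{\mathfrak{g}}}(-\frac{1}{2},0))$ to the already-determined universal Zhu algebra $A_{\omega_\xi}(V_{\widehat{\mathfrak{g}}}(-\frac{1}{2},0))\cong\mathbb{C}[t_1,t_2]$ and then to pin down the relevant ideal purely from the module theory of Section \ref{sec:3}. By Proposition \ref{lemsv1} we have $L_{\widehat{\mathfrak{g}}}(-\frac{1}{2},0)=V_{\widehat{\mathfrak{g}}}(-\frac{1}{2},0)/I$ with $I=U(\tilde{\mathfrak{g}})v_1$ the ideal generated by the singular vector $v_1$ of Lemma \ref{lemsv00}. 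Lemma \ref{lemfz} then gives $A_{\omega_\xi}(L_{\widehat{\mathfrak{g}}}(-\frac{1}{2},0))\cong A_{\omega_\xi}(V_{\widehat{\mathfrak{g}}}(-\frac{1}{2},0))/\bar{I}$, where $\bar{I}=(I+O_{\omega_\xi}(V_{\widehat{\mathfrak{g}}}(-\frac{1}{2},0)))/O_{\omega_\xi}(V_{\widehat{\mathfrak{g}}}(-\frac{1}{2},0))$. Transporting this through the isomorphism $\psi$ of Proposition \ref{propav}, I regard $\bar{I}$ as an ideal of $\mathbb{C}[t_1,t_2]$, so $A_{\omega_\xi}(L_{\widehat{\mathfrak{g}}}(-\frac{1}{2},0))\cong\mathbb{C}[t_1,t_2]/\bar{I}$. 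Since $\mathbb{C}[t_1,t_2]$ is generated by the even elements $h_1(-1)\textbf{1},h_2(-1)\textbf{1}$, this algebra is commutative and concentrated in the even part, so its superalgebra structure is that of an ordinary commutative $\mathbb{C}$-algebra.

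The idea is now to determine $\bar{I}$ without reducing $v_1$ by hand, exploiting semisimplicity. By Theorem \ref{thm22} the pair $(L_{\widehat{\mathfrak{g}}}(-\frac{1}{2},0),\omega_\xi)$ is rational, so Proposition \ref{prop22} shows $A_{\omega_\xi}(L_{\widehat{\mathfrak{g}}}(-\frac{1}{2},0))$ is finite-dimensional and semisimple; being commutative, it is reduced, hence isomorphic to $\mathbb{C}[t_1,t_2]$ evaluated at the finitely many points of its support, and $\bar{I}$ is the intersection of the maximal ideals of $\mathbb{C}[t_1,t_2]$ lying in that support. By Theorem \ref{thmzhu}(d) the irreducible $A_{\omega_\xi}(L_{\widehat{\mathfrak{g}}}(-\frac{1}{2},0))$-modules are in bijection with the irreducible $\mathbb{Q}_+$-graded weak $(L_{\widehat{\mathfrak{g}}}(-\frac{1}{2},0),\omega_\xi)$-modules, and by the argument of Theorem \ref{thm22} every such module lies in $\mathcal{C}_{-\frac{1}{2}}$, so these are exactly the four modules $L(-\frac{1}{2},\lambda)$ of Theorem \ref{thmc12}.

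It remains to locate the four support points. On the top space $\Omega(L(-\frac{1}{2},\lambda))$, the generators $h_1(-1)\textbf{1}$ and $h_2(-1)\textbf{1}$ have $\omega_\xi$-weight $1$ (their $h_0$-weight being zero), so by the definition of $o$ they act as $h_1(0)$ and $h_2(0)$, i.e. by the scalars $\lambda(h_1)$ and $\lambda(h_2)$. Under $\psi$ this means the irreducible module attached to $\lambda$ sits at the point $(\lambda(h_1),\lambda(h_2))$, and for the four admissible weights these are exactly $(0,0),(-\frac{1}{2},0),(0,-\frac{1}{2}),(\frac{1}{2},\frac{1}{2})$, which are precisely the common zeros of $p_1(t_1,t_2)=t_1(2t_1-4t_2+1)$ and $p_2(t_1,t_2)=t_2(2t_2-4t_1+1)$ found in Proposition \ref{prop234}. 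Writing $I_1=\langle t_1,t_2\rangle,\ I_2=\langle t_1,t_2+\frac{1}{2}\rangle,\ I_3=\langle t_1+\frac{1}{2},t_2\rangle,\ I_4=\langle t_1-\frac{1}{2},t_2-\frac{1}{2}\rangle$ for the maximal ideals at these points, the coprimality computation already recorded in the proof of Proposition \ref{prop11} gives $\bigcap_{i=1}^{4}I_i=\langle p_1,p_2\rangle$. Hence $\bar{I}=\bigcap_{i=1}^{4}I_i=\langle p_1,p_2\rangle$, which yields the desired isomorphism $A_{\omega_\xi}(L_{\widehat{\mathfrak{g}}}(-\frac{1}{2},0))\cong\mathbb{C}[t_1,t_2]/\langle p_1,p_2\rangle$; the Chinese Remainder Theorem then identifies the right-hand side with $\mathbb{C}^4$, so the algebra is semisimple and finite-dimensional.

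The main subtlety is that this clean route only controls $\bar{I}$ up to radical: it shows $\bar{I}$ equals the ideal cut out by its support, and the crucial input that no nilpotents hide in $\bar{I}$ is exactly the reducedness coming from rationality (Theorem \ref{thm22}) together with Proposition \ref{prop22}. If one instead wants a direct verification of the inclusion $\langle p_1,p_2\rangle\subseteq\bar{I}$, the hands-on route is to act on $v_1$ by the lowering operators $f_1(0)f_2(0)$ and $f_2(0)f_1(0)$ to produce $h_+(0)$-weight-zero elements of $I$ and reduce them modulo $O_{\omega_\xi}=C\,V_{\widehat{\mathfrak{g}}}(-\frac{1}{2},0)$ with $C$ as in Proposition \ref{propav}; this reduction mirrors the bracket computation $(f_1f_2)_LF(\overline{v_1})\equiv p_1$ and $(f_2f_1)_LF(\overline{v_1})\equiv p_2$ of the $\mathbb{Z}$-graded case and is the only genuinely computational step one might still wish to carry out.
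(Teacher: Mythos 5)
Your proof is correct, but it runs the paper's logic in reverse order. The paper, like you, starts from Lemma \ref{lemfz} and the identification $A_{\omega_\xi}(V_{\widehat{\mathfrak{g}}}(-\frac{1}{2},0))\cong\mathbb{C}[t_1,t_2]$ of Proposition \ref{propav}, but it then pins down the ideal directly: for an irreducible $A_{\omega_\xi}(V_{\widehat{\mathfrak{g}}}(-\frac{1}{2},0))$-module $U$ it matches, via the Zhu correspondence of Theorem \ref{thmzhu}, the condition $I.U=0$ against the criterion of Proposition \ref{propmodule} that $L(-\frac{1}{2},U)$ be a weak $L_{\widehat{\mathfrak{g}}}(-\frac{1}{2},0)$-module, namely $p_1(h_1,h_2)U=p_2(h_1,h_2)U=0$, and concludes $\psi^{-1}(I)=\langle p_1,p_2\rangle$; semisimplicity and finite-dimensionality are then read off from the coprimality/CRT splitting recorded in the proof of Proposition \ref{prop11}, \emph{not} from rationality. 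You instead invoke Theorem \ref{thm22} together with Proposition \ref{prop22} to get finite-dimensional semisimplicity, hence reducedness, first, which forces $\bar{I}$ to equal the intersection of the maximal ideals at its support; you then locate the support via Theorem \ref{thmc12} and the computation $o(h_i(-1)\textbf{1})=h_i(0)$ on top spaces. This is legitimate (there is no circularity: Theorem \ref{thm22} rests only on Theorem \ref{thmc12}), and it honestly addresses a point the paper's own wording glosses over — matching simple modules determines an ideal only up to Jacobson radical, and your semisimplicity input supplies exactly the missing reducedness, as your closing remark correctly flags. The trade-off: your presentation of $A_{\omega_\xi}(L_{\widehat{\mathfrak{g}}}(-\frac{1}{2},0))$ now depends on the rationality theorem, whereas the paper's identification of the ideal is independent of it and even reproves semisimplicity without Proposition \ref{prop22}; and note that both routes ultimately consume the same singular-vector computation, since Theorem \ref{thmc12} is itself proved from Proposition \ref{propmodule}. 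One small step you elide: you should check that an irreducible $\mathbb{Q}_+$-graded weak module is irreducible as a weak module before reading it off the list in Theorem \ref{thmc12}; this holds because such a module lies in $\mathcal{C}_{-\frac{1}{2}}$ and decomposes by semisimplicity into copies of the $L(-\frac{1}{2},\lambda)$, each summand contributing its highest weight vector to $\Omega$, while $\Omega$ of the induced module is the one-dimensional $U$, so there can be only one summand.
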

\begin{proof}
Since $(L_{\widehat{\mathfrak{g}}}(-\frac{1}{2},0),\omega_\xi)$ is the quotient
vertex operator superalgebra of $(V_{\widehat{\mathfrak{g}}}(-\frac{1}{2},0),\omega_\xi)$,
thus by the Lemma \ref{lemfz}, we have that $A_{\omega_\xi}(L_{\widehat{\mathfrak{g}}}(-\frac{1}{2},0))$ is
the quotient superalgebra of $A_{\omega_\xi}(V_{\widehat{\mathfrak{g}}}(-\frac{1}{2},0))$.
Set $A_{\omega_\xi}(L_{\widehat{\mathfrak{g}}}(-\frac{1}{2},0))=A_{\omega_\xi}(V_{\widehat{\mathfrak{g}}}(-\frac{1}{2},0))/I$.
From the proof of Proposition \ref{propav}, any irreducible $A_{\omega_\xi}(V_{\widehat{\mathfrak{g}}}(-\frac{1}{2},0))$-module
$U$ is also an irreducible $\mathfrak{h}$-module.
By the definition of $L(-\frac{1}{2}, U)$,
$L(-\frac{1}{2}, U)$ is the corresponding irreducible $\mathbb{Q}_+$-graded weak $V_{\widehat{\mathfrak{g}}}(-\frac{1}{2},0)$-module
in the Theorem \ref{thmzhu}.
From the Proposition \ref{propmodule},
$L(-\frac{1}{2},U)$ is an irreducible $\mathbb{Q}_+$-graded weak $L_{\widehat{\mathfrak{g}}}(-\frac{1}{2},0)$-module
if and only if $p_1(h_1,h_2)U=p_2(h_1,h_2)U=0$.
On the other hand, $L(-\frac{1}{2},U)$ is an irreducible $\mathbb{Q}_+$-graded weak $L_{\widehat{\mathfrak{g}}}(-\frac{1}{2},0)$-module
is equivalent to $U$ is an irreducible $A_{\omega_\xi}(L_{\widehat{\mathfrak{g}}}(-\frac{1}{2},0))$-module, i.e., $I.U=0$.
Thus $\psi^{-1}(I)=\langle p_1(t_1,t_2),p_2(t_1,t_2)\rangle$.
Therefore, $A_{\omega_\xi}(L_{\widehat{\mathfrak{g}}}(-\frac{1}{2},0))$ is isomorphic to $$\mathbb{C}[t_1,t_2]/\langle p_1(t_1,t_2),p_2(t_1,t_2)\rangle.$$
From the proof of the Proposition \ref{prop11},
$A_{\omega_\xi}(L_{\widehat{\mathfrak{g}}}(-\frac{1}{2},0))$ is semisimple and finite-dimensional.
\end{proof}

Finally, we show that $(L_{\widehat{\mathfrak{g}}}(-\frac{1}{2},0),\omega_\xi)$ is $C_2$-cofinite.

\begin{thm}
The commutative associative superalgebra $A_2(L_{\widehat{\mathfrak{g}}}(-\frac{1}{2},0))$ for $(L_{\widehat{\mathfrak{g}}}(-\frac{1}{2},0),\omega_\xi)$
is isomorphic to the quotient algebra $\mathbb{C}[t_1,t_2]/\langle q_1(t_1,t_2),q_2(t_1,t_2)\rangle$,
where
\begin{equation*}
q_1(t_1,t_2)=t_1(t_1-2t_2),~q_2(t_1,t_2)=t_2(t_2-2t_1).
\end{equation*}
Moreover, $(L_{\widehat{\mathfrak{g}}}(-\frac{1}{2},0),\omega_\xi)$ is $C_2$-cofinite.
\end{thm}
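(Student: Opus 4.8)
The plan is to first compute the $C_2$-algebra of the universal object $V_{\widehat{\mathfrak{g}}}(-\frac{1}{2},0)$ and then pass to the quotient $L_{\widehat{\mathfrak{g}}}(-\frac{1}{2},0)$, mirroring the computation of $A_{\omega_\xi}$ in Proposition \ref{propav} and Theorem \ref{thmav}. First I would observe that, with respect to $\omega_\xi$ with $0<\xi<1$, among the generating fields only $h_1(-1)\textbf{1}$ and $h_2(-1)\textbf{1}$ carry integer conformal weight, since $\mbox{wt}\,e_{12}(-1)\textbf{1}=1-\xi$, $\mbox{wt}\,f_{12}(-1)\textbf{1}=1+\xi$ and $\mbox{wt}\,e_i(-1)\textbf{1}=1\mp\frac{\xi}{2}$, $\mbox{wt}\,f_i(-1)\textbf{1}=1\pm\frac{\xi}{2}$ are all non-integral. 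By the definition of $C_2$, for a non-integer-weight generator $a$ one has $a(-n)V_{\widehat{\mathfrak{g}}}(-\frac{1}{2},0)\subseteq C_2$ for all $n\ge1$, while for $a\in\{h_1,h_2\}$ one has $a(-n)V_{\widehat{\mathfrak{g}}}(-\frac{1}{2},0)\subseteq C_2$ for all $n\ge2$ (using $(Da)_{-1}=a_{-2}$, $(Da)_{-2}=2a_{-3}$ together with $L^\prime(-1)=D$). Consequently $A_2(V_{\widehat{\mathfrak{g}}}(-\frac{1}{2},0))$ is spanned by the images of $h_1(-1)^{a}h_2(-1)^{b}\textbf{1}$ and, exactly as for $A_{\omega_\xi}$ in Proposition \ref{propav}, is a free polynomial algebra $\mathbb{C}[t_1,t_2]$ with $t_i\leftrightarrow\overline{h_i(-1)\textbf{1}}$.

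Next I would invoke the $A_2$-analogue of Lemma \ref{lemfz}: writing $L_{\widehat{\mathfrak{g}}}(-\frac{1}{2},0)=V_{\widehat{\mathfrak{g}}}(-\frac{1}{2},0)/I$ with $I=U(\widetilde{\mathfrak{g}})v_1$ the maximal ideal, one has $A_2(L_{\widehat{\mathfrak{g}}}(-\frac{1}{2},0))\cong A_2(V_{\widehat{\mathfrak{g}}}(-\frac{1}{2},0))/\overline I$, where $\overline I=(I+C_2)/C_2$. The main point is then to identify $\overline I\subseteq\mathbb{C}[t_1,t_2]$. A key feature is that the image of the singular vector $v_1$ of Lemma \ref{lemsv00} is itself zero in $A_2$, since every term of $v_1$ contains a non-integer-weight mode; one must therefore descend inside $I$. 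Applying the zero modes $f_1(0)f_2(0)$ to $v_1$ and discarding every monomial containing a non-integer-weight mode or a mode $a(-n)$ with $n\ge2$, I expect to be left with exactly $2t_1^2-4t_1t_2=2q_1$; a parallel computation with $f_2(0)f_1(0)$ (equivalently, the symmetry exchanging $\alpha_1$ and $\alpha_2$, under which $v_1\mapsto-v_1$) produces $q_2$. This is the affine-super analogue of the derivation of $p_1,p_2$ in Theorem \ref{thmav}, the difference being that the linear terms of $p_1,p_2$ arise from central and commutator contributions that here land in $C_2$ and disappear. Thus $\langle q_1,q_2\rangle\subseteq\overline I$, yielding a surjection
\[
\mathbb{C}[t_1,t_2]/\langle q_1,q_2\rangle\twoheadrightarrow A_2(L_{\widehat{\mathfrak{g}}}(-\tfrac{1}{2},0)).
\]

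To finish I would check this surjection is an isomorphism and deduce $C_2$-cofiniteness. A direct calculation shows the only common zero of $q_1,q_2$ is the origin, so $\mathbb{C}[t_1,t_2]/\langle q_1,q_2\rangle$ is Artinian with basis $\{1,t_1,t_2,t_1t_2\}$ of dimension $4$; in particular $A_2(L_{\widehat{\mathfrak{g}}}(-\frac{1}{2},0))$ is finite-dimensional and $(L_{\widehat{\mathfrak{g}}}(-\frac{1}{2},0),\omega_\xi)$ is $C_2$-cofinite. For the reverse inclusion $\overline I\subseteq\langle q_1,q_2\rangle$ I would use the standard comparison between the $C_2$-algebra and Zhu's algebra, namely the surjection of $A_2$ onto the associated graded of $A_{\omega_\xi}$, which gives $\dim A_{\omega_\xi}(L_{\widehat{\mathfrak{g}}}(-\frac{1}{2},0))\le\dim A_2(L_{\widehat{\mathfrak{g}}}(-\frac{1}{2},0))$. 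Since Theorem \ref{thmav} identifies $A_{\omega_\xi}(L_{\widehat{\mathfrak{g}}}(-\frac{1}{2},0))$ with $\mathbb{C}[t_1,t_2]/\langle p_1,p_2\rangle$, whose four common zeros are distinct, the right-hand side has dimension $4$, forcing $\dim A_2(L_{\widehat{\mathfrak{g}}}(-\frac{1}{2},0))\ge4$ and hence equality throughout.

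The step I expect to be the main obstacle is pinning down $\overline I$ exactly. The lower-bound half (extracting $q_1,q_2$) is a careful but routine affine-superalgebra computation; the delicate point is that $\overline I$ could a priori be a strictly larger ideal lying between $\langle q_1,q_2\rangle$ and the maximal ideal $\langle t_1,t_2\rangle$, since the intermediate quotient $\langle t_1,t_2\rangle/\langle q_1,q_2\rangle$ is three-dimensional and admits many ideals. Ruling this out is precisely what the dimension comparison with the already-determined Zhu algebra $A_{\omega_\xi}(L_{\widehat{\mathfrak{g}}}(-\frac{1}{2},0))$ accomplishes.
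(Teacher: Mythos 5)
Your proposal is correct, and it reaches the theorem by a genuinely different route at the decisive step, namely the proof that the image ideal $\overline{I}\subseteq\mathbb{C}[t_1,t_2]$ is no larger than $\langle q_1,q_2\rangle$. The paper does everything by direct computation inside $U(N_-)$: after establishing $C_2(M)=B_1M$, it passes to the quotient Lie superalgebra $L=N_-/B_2$ and reduces, monomial by monomial, every PBW element $\bar{e}_{1}^{a_1}\bar{e}_{2}^{a_2}\bar{e}_{12}^{a_3}\bar{h}_{1}^{b_1}\bar{h}_{2}^{b_2}\bar{f}_{1}^{c_1}\bar{f}_{2}^{c_2}\bar{f}_{12}^{c_3}(\bar{e}_{1}\bar{e}_{2}+\bar{h}_{-}\bar{e}_{12})$, finding that the singular vector contributes exactly $U(\bar{\mathfrak{h}})\bar{h}_1(\bar{h}_1-2\bar{h}_2)+U(\bar{\mathfrak{h}})\bar{h}_-\bar{h}_+$, whose ideal equals $\langle q_1,q_2\rangle$ since $q_1-q_2=t_1^2-t_2^2$; $C_2$-cofiniteness is then deduced from the Finiteness Theorem of \cite{CLO} because the only common zero is the origin. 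Your lower-bound extraction coincides in substance with the paper's reduction, and your expectation there is right: applying $f_1(0)f_2(0)$ to $v_1$ and discarding $C_2$-terms kills precisely the terms $-4f_1e_1-2f_{12}e_{12}-2f_2e_2+h_1$ appearing in the paper's Zhu-algebra computation (the linear term corresponds to a mode of depth $\leq -2$, hence lies in $C_2$), leaving $2q_1$, with $q_2$ by symmetry; note also that working with $I=U(\tilde{\mathfrak{g}})v_1$ in $V_{\widehat{\mathfrak{g}}}(-\frac{1}{2},0)$ is legitimate by Proposition \ref{lemsv1}, whereas the paper works from the Verma module and carries the extra generators $f_1(0)v,f_2(0)v$. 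Where you genuinely diverge is the upper bound: instead of the paper's exhaustive PBW reduction you compare dimensions with the already-determined Zhu algebra, using the surjection from $A_2$ onto the associated graded of $A_{\omega_\xi}$ together with $\dim A_{\omega_\xi}(L_{\widehat{\mathfrak{g}}}(-\frac{1}{2},0))=4$ (four distinct reduced zeros of $p_1,p_2$; equivalently a commutative semisimple algebra with four characters) and $\dim\mathbb{C}[t_1,t_2]/\langle q_1,q_2\rangle=4$ with basis $\{1,t_1,t_2,t_1t_2\}$. This is shorter and replaces the paper's appeal to the Finiteness Theorem by an explicit basis; its one external cost is the graded comparison $A_2(V)\twoheadrightarrow \mathrm{gr}\,A_{\omega_\xi}(V)$ in the $\mathbb{Q}$-graded super setting, which the paper never states and which you should verify explicitly — it is routine, since for $\varepsilon(a)=0$ the spanning element of $O_{\omega_\xi}(V)$ has leading term $a_{-1}b$ and for $\varepsilon(a)=1$ it has leading term $a_{-2}b$, so $C_2(V)$ maps into strictly lower filtered pieces exactly as in Zhu's original argument (cf. \cite{AV,DLM2}).
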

\begin{proof}
Let $B_1=\mathbb{C}e_{12}(-1)+\mathbb{C}e_{1}(-1)+\mathbb{C}e_{2}(-1)+\mathbb{C}f_{12}(-1)+\mathbb{C}f_{1}(-1)+\mathbb{C}f_{2}(-1)+
\mathfrak{g}\otimes t^{-2}\mathbb{C}[t^{-1}]$.
Similar to the Proposition 4.9 of \cite{DLM2},
we have $C_2(M)=B_1M$ for any weak $(V_{\widehat{\mathfrak{g}}}(-\frac{1}{2},0),\omega_\xi)$-module $M$.
If we can also view $M$ as a $(L_{\widehat{\mathfrak{g}}}(-\frac{1}{2},0),\omega_\xi)$-module, by the definition of $C_2(M)$,
we have $C_2(M)=B_1M$ for $(L_{\widehat{\mathfrak{g}}}(-\frac{1}{2},0),\omega_\xi)$-module $M$.
Let $B_2=\mathbb{C}f_{12}(-1)+\mathbb{C}f_{1}(-1)+\mathbb{C}f_{2}(-1)+
\mathfrak{g}\otimes t^{-2}\mathbb{C}[t^{-1}]$,
it is clear that $B_2$ is an ideal of $N_-$.
Set $L=N_-/B_2$ and
denote by $\bar{e}_{12}=e_{12}(-1)+B_2,\bar{e}_{1}=e_{1}(-1)+B_2,\bar{e}_{2}=e_{2}(-1)+B_2,\bar{f}_{12}=f_{12}(0)+B_2,\bar{f}_{1}=f_{1}(0)+B_2,
\bar{f}_{2}=f_{2}(0)+B_2,\bar{h}_{1}=h_{1}(-1)+B_2,\bar{h}_{2}=h_{2}(-1)+B_2$.
Let $\bar{\mathfrak{h}}=\mathbb{C}\bar{h}_{1}\oplus\mathbb{C}\bar{h}_{2}$
and $\bar{h}_{-}=\bar{h}_{1}-\bar{h}_{2}, \bar{h}_{+}=\bar{h}_{1}+\bar{h}_{2}$.

Since $M(-\frac{1}{2},0)\cong U(N_-)$ as vector space,
then
\begin{align*}
 C_2(M(-\frac{1}{2},0))&=B_1M(-\frac{1}{2},0)\\&=B_2M(-\frac{1}{2},0)+e_{12}(-1)M(-\frac{1}{2},0)+e_{1}(-1)M(-\frac{1}{2},0)+e_{2}(-1)M(-\frac{1}{2},0)
\\&\cong B_2U(N_-)+e_{12}(-1)U(N_-)+e_{1}(-1)U(N_-)+e_{2}(-1)U(N_-)
\end{align*}
as vector space.
Since $L(-\frac{1}{2},0)=M(-\frac{1}{2},0)\big/(U(N_-)av+U(N_-)f_1(0)v+U(N_-)f_2(0)v)$,
where $a=2e_1(-1)e_2(-1)+2h_-(-1)e_{12}(-1)-e_{12}(-2)$ and $v$ is a highest weight vector of $M(-\frac{1}{2},0)$,
we have
\begin{equation*}
\begin{aligned}
C_2(L(-\frac{1}{2},0))=B_1L(-\frac{1}{2},0)\cong &(B_1M(-\frac{1}{2},0)+U(N_-)av+U(N_-)f_1(0)v\\&+U(N_-)f_2(0)v)\Big/(U(N_-)av+U(N_-)f_1(0)v+U(N_-)f_2(0)v).
\end{aligned}
\end{equation*}
Hence
\begin{align*}
 &A_2(L(-\frac{1}{2},0))=L(-\frac{1}{2},0)\big/C_2(L(-\frac{1}{2},0))\\ &\cong
M(-\frac{1}{2},0)\Big/(B_1M(-\frac{1}{2},0)+U(N_-)av+U(N_-)f_1(0)v+U(N_-)f_2(0)v)
\\&\cong U(N_-)\Big/\begin{pmatrix}B_2U(N_-)+e_{12}(-1)U(N_-)+e_{1}(-1)U(N_-)+e_{2}(-1)U(N_-)\\+U(N_-)a+U(N_-)f_1(0)+U(N_-)f_2(0)\end{pmatrix}
\\&\cong U(L)\Big/(\bar{e}_{12}U(L)+\bar{e}_{1}U(L)+\bar{e}_{2}U(L)+
U(L)(\bar{e}_{1}\bar{e}_{2}+\bar{h}_{-}\bar{e}_{12})+U(L)\bar{f}_{1}+U(L)\bar{f}_{2})
\end{align*}
as a vector space.
We consider the element
$$\bar{e}_{1}^{a_1}\bar{e}_{2}^{a_2}\bar{e}_{12}^{a_3}\bar{h}_{1}^{b_1}
\bar{h}_{2}^{b_2}\bar{f}_{1}^{c_1}\bar{f}_{2}^{c_2}\bar{f}_{12}^{c_3}(\bar{e}_{1}\bar{e}_{2}+\bar{h}_{-}\bar{e}_{12}),$$
where $a_1,a_2,c_1,c_2\in\{0,1\},b_1,b_2,a_3,c_3\in\mathbb{Z}_+$.
If $c_3\geq 1$,
\begin{align*}
\bar{e}_{1}^{a_1}\bar{e}_{2}^{a_2}\bar{e}_{12}^{a_3}\bar{h}_{1}^{b_1}
\bar{h}_{2}^{b_2}\bar{f}_{1}^{c_1}\bar{f}_{2}^{c_2}\bar{f}_{12}^{c_3}(\bar{e}_{1}\bar{e}_{2}+\bar{h}_{-}\bar{e}_{12})
&\equiv-\bar{e}_{1}^{a_1}\bar{e}_{2}^{a_2}\bar{e}_{12}^{a_3}\bar{h}_{1}^{b_1}
\bar{h}_{2}^{b_2}\bar{f}_{1}^{c_1}\bar{f}_{2}^{c_2}\bar{h}_{-}\bar{h}_{+}\bar{f}_{12}^{c_3-1}\\
&(\mbox{mod}~(\bar{e}_{12}U(L)+\bar{e}_{1}U(L)+\bar{e}_{2}U(L)+U(L)\bar{f}_{1}+U(L)\bar{f}_{2})),
\end{align*}
thus for $a_i>0~(i=1,2,3)$ or $c_3>1$ or $c_1=c_3=1$ or $c_2=c_3=1$, we have
$\bar{e}_{1}^{a_1}\bar{e}_{2}^{a_2}\bar{e}_{12}^{a_3}\bar{h}_{1}^{b_1}
\bar{h}_{2}^{b_2}\bar{f}_{1}^{c_1}\bar{f}_{2}^{c_2}\bar{f}_{12}^{c_3}$ $(\bar{e}_{1}\bar{e}_{2}+\bar{h}_{-}\bar{e}_{12})\in
\bar{e}_{12}U(L)+\bar{e}_{1}U(L)+\bar{e}_{2}U(L)+U(L)\bar{f}_{1}+U(L)\bar{f}_{2}$.
If $a_i=0~(i=1,2,3),c_3=0,c_1=c_2=1$, we have
\begin{align*}
\bar{h}_{1}^{b_1}\bar{h}_{2}^{b_2}\bar{f}_{1}\bar{f}_{2}(\bar{e}_{1}\bar{e}_{2}+\bar{h}_{-}\bar{e}_{12})
\equiv&\bar{h}_{1}^{b_1}\bar{h}_{2}^{b_2}\bar{h}_1(\bar{h}_1-2\bar{h}_2)\\
&(\mbox{mod}~(\bar{e}_{12}U(L)+\bar{e}_{1}U(L)+\bar{e}_{2}U(L)+U(L)\bar{f}_{1}+U(L)\bar{f}_{2})).
\end{align*}
Similarly, we have $\bar{h}_{1}^{b_1}\bar{h}_{2}^{b_2}\bar{f}_{1}(\bar{e}_{1}\bar{e}_{2}+\bar{h}_{-}\bar{e}_{12}),
\bar{h}_{1}^{b_1}\bar{h}_{2}^{b_2}\bar{f}_{2}(\bar{e}_{1}\bar{e}_{2}+\bar{h}_{-}\bar{e}_{12}),
\bar{h}_{1}^{b_1}\bar{h}_{2}^{b_2}(\bar{e}_{1}\bar{e}_{2}+\bar{h}_{-}\bar{e}_{12})\in
\bar{e}_{12}U(L)+\bar{e}_{1}U(L)+\bar{e}_{2}U(L)+U(L)\bar{f}_{1}+U(L)\bar{f}_{2}$.
Thus $\bar{e}_{12}U(L)+\bar{e}_{1}U(L)+\bar{e}_{2}U(L)+
U(L)(\bar{e}_{1}\bar{e}_{2}+\bar{h}_{-}\bar{e}_{12})+U(L)\bar{f}_{1}+U(L)\bar{f}_{2}=
\bar{e}_{12}U(L)+\bar{e}_{1}U(L)+\bar{e}_{2}U(L)+U(L)\bar{f}_{1}+U(L)\bar{f}_{2}+
U(\bar{\mathfrak{h}})\bar{h}_1(\bar{h}_1-2\bar{h}_2)+U(\bar{\mathfrak{h}})(\bar{h}_1-\bar{h}_2)(\bar{h}_1+\bar{h}_2)$.
Therefore
\begin{align*}
 A_2(L(-\frac{1}{2},0))&\cong U(\bar{\mathfrak{h}})\big/(U(\bar{\mathfrak{h}})\bar{h}_1(\bar{h}_1-2\bar{h}_2)+U(\bar{\mathfrak{h}})(\bar{h}_1-\bar{h}_2)(\bar{h}_1+\bar{h}_2))\\
 &\cong U(\bar{\mathfrak{h}})\big/(U(\bar{\mathfrak{h}})\bar{h}_1(\bar{h}_1-2\bar{h}_2)+U(\bar{\mathfrak{h}})\bar{h}_2(\bar{h}_2-2\bar{h}_1))\\
 &\cong\mathbb{C}[t_1,t_2]/\langle q_1(t_1,t_2),q_2(t_1,t_2)\rangle
\end{align*}
as a vector space,
it is clear that this vector space isomorphism is also an associative algebra isomorphism.
From the Finiteness Theorem (cf. \cite{CLO}), we have
$$\mbox{dim}~\mathbb{C}[t_1,t_2]/\langle q_1(t_1,t_2),q_2(t_1,t_2)\rangle<\infty.$$
Hence $(L_{\widehat{\mathfrak{g}}}(-\frac{1}{2},0),\omega_\xi)$ is $C_2$-cofinite.
\end{proof}

\section{Non-boundary admissible level $\frac{1}{2}$}
\label{sec:5}
	\def\theequation{5.\arabic{equation}}
	\setcounter{equation}{0}

In this section, we consider the non--boundary admissible level $\frac{1}{2}$.
We show that the category $\mathcal{O}_{\frac{1}{2}}$ has infinitely many irreducible modules,
the category $\mathcal{C}_{\frac{1}{2}}$ is not semisimple
and the $\mathbb{Q}$-graded vertex operator superalgebra $(L_{\widehat{\mathfrak{g}}}(\frac{1}{2},0),\omega_\xi)$ is not rational.

By using Mathematica,
we obtain the following singular vector of $\widetilde{\mathfrak{g}}$-module $V(\frac{1}{2},\mathbb{C})$.

\begin{align*}
v_2=&(2889/128e_{12}(-2)^3
+81/8e_{12}(-4)e_{12}(-1)^2
+135/8e_1(-3)e_2(-1)e_{12}(-1)^2\\&
-9/16h_1(-1)^3e_{12}(-1)^3
-9h_2(-3)e_{12}(-1)^3+9/16h_2(-1)^3e_{12}(-1)^3\\&
+9h_1(-3)e_{12}(-1)^3
-54e_1(-2)e_2(-2)e_{12}(-1)^2+18f_2(-1)e_2(-2)e_{12}(-1)^3\\&
+9/4f_1(-2)e_1(-1)e_{12}(-1)^3
+135/8e_1(-1)e_2(-3) e_{12}(-1)^2\\&
-2889/64e_1(-1) e_2(-1) e_{12}(-2)^2
-1053/32e_{12}(-3)e_{12}(-2)e_{12}(-1)\\&
-18f_1(-1)e_1(-2)e_{12}(-1)^3
+9/4f_{12}(-2)e_{12}(-1)^2e_{12}(-1)^2\\&
+9/8f_{12}(-1)e_{12}(-2)e_{12}(-1)^3
-9/4f_2(-2)e_2(-1)e_{12}(-1)^3\\&
-135/32h_1(-2)e_{12}(-2)e_{12}(-1)^2
-135/16h_1(-2)h_1(-1)e_{12}(-1)^3\\&
+171/16h_1(-2)h_2(-1)e_{12}(-1)^3
+459/16h_1(-1)e_{12}(-3)e_{12}(-1)^2\\&
-2133/64h_1(-1)e_{12}(-2)^2e_{12}(-1)
-171/16h_1(-1)h_2(-2)e_{12}(-1)^3\\&
+9/16h_1(-1)h_2(-1)^2e_{12}(-1)^3
+297/32h_1(-1)^2e_{12}(-2)e_{12}(-1)^2\\&
-9/16h_1(-1)^2h_2(-1)e_{12}(-1)^3
+405/32h_2(-2)e_{12}(-2)e_{12}(-1)^2\\&
+135/16h_2(-2)h_2(-1)e_{12}(-1)^3
-243/16h_2(-1)e_{12}(-3)e_{12}(-1)^2\\&
+729/64h_2(-1)e_{12}(-2)^2e_{12}(-1)
-243/32h_2(-1)^2e_{12}(-2)e_{12}(-1)^2\\&
+135/4e_1(-2)e_2(-1)e_{12}(-2)e_{12}(-1)
+135/4e_1(-1)e_2(-2)e_{12}(-2)e_{12}(-1)\\&
-27/16e_1(-1)e_2(-1)e_{12}(-3)e_{12}(-1)
+135/8f_1(-1)e_1(-1)e_{12}(-2)e_{12}(-1)^2\\&
+9/2f_1(-1)f_2(-1)e_{12}(-1)^2e_{12}(-1)^2
-9/4f_1(-1)h_1(-1)e_1(-1)e_{12}(-1)^3\\&
-9/4f_1(-1)h_2(-1)e_1(-1)e_{12}(-1)^3
-9/4f_{12}(-1)e_1(-1)e_2(-1)e_{12}(-1)^3\\&
-9/4f_{12}(-1)h_1(-1)e_{12}(-1)^2e_{12}(-1)^2
+9/4f_{12}(-1)h_2(-1)e_{12}(-1)^2e_{12}(-1)^2\\&
-135/8f_2(-1)e_2(-1)e_{12}(-2)e_{12}(-1)^2
+9/4f_2(-1)h_1(-1)e_2(-1)e_{12}(-1)^3\\&
+9/4f_2(-1)h_2(-1)e_2(-1)e_{12}(-1)^3
-135/16h_1(-2)e_1(-1)e_2(-1)e_{12}(-1)^2\\&
-27/4h_1(-1)e_1(-2)e_2(-1)e_{12}(-1)^2
-27/4h_1(-1)e_1(-1)e_2(-2)e_{12}(-1)^2\\&
+27/16h_1(-1)h_2(-1)e_{12}(-2)e_{12}(-1)^2
-27/16h_1(-1)^2e_1(-1)e_2(-1)e_{12}(-1)^2\\&
-135/16h_2(-2)e_1(-1)e_2(-1)e_{12}(-1)^2
-27/4h_2(-1)e_1(-2)e_2(-1)e_{12}(-1)^2\\&
-27/4h_2(-1)e_1(-1)e_2(-2)e_{12}(-1)^2
-27/16h_2(-1)^2e_1(-1)e_2(-1)e_{12}(-1)^2\\&
+351/16h_1(-1)e_1(-1)e_2(-1)e_{12}(-2)e_{12}(-1)\\&
-27/8h_1(-1)h_2(-1)e_1(-1)e_2(-1)e_{12}(-1)^2\\&
+351/16 h_2(-1)e_1(-1)e_2(-1)e_{12}(-2)e_{12}(-1))\mathbf{1}
\end{align*}
is a singular vector of $\widetilde{\mathfrak{g}}$-module $V(\frac{1}{2},\mathbb{C})$
with weight $r_{\alpha_0+\delta}.(\frac{1}{2}\Lambda_0)$.
Then we have
$$A(L_{\widehat{\mathfrak{g}}}(\frac{1}{2},0))\cong U(\mathfrak{g})\Big/\langle F(\overline{v_2})\rangle,$$ where
\begin{align*}
F(\overline{v_2})=&27/128e_{12}^3
+9/64e_{12}^3h_1
-9/64e_{12}^3h_2
-27/64e_{12}^2e_2e_1
+9/8e_{12}^3e_1f_1
-27/8e_{12}^4f_{12}\\&
-9/8e_{12}^3e_2f_2
-27/32e_{12}^3h_1^2
-27/16e_{12}^3h_2h_1
-27/32e_{12}^3h_2^2
+9/4e_{12}^3e_1h_1f_1\\&
+9/4e_{12}^3e_1h_2f_1
-9/2e_{12}^4f_2f_1
-9/4e_{12}^4h_1f_{12}
+9/4e_{12}^4h_2f_{12}
+9/4e_{12}^3e_2e_1f_{12}\\&
-9/4e_{12}^3e_2h_1f_2
-9/4e_{12}^3e_2h_2f_2
-9/16e_{12}^3h_1^3
-9/16e_{12}^3h_2h_1^2
+9/16e_{12}^3h_2^2h_1\\&
+9/16e_{12}^3h_2^3
+27/16e_{12}^2e_2e_1h_1^2
+27/8e_{12}^2e_2e_1h_2h_1
+27/16e_{12}^2e_2e_1h_2^2.
\end{align*}

\begin{lem}\label{lem11}
$P_0=\mbox{span}_{\mathbb{C}}\{p_1^\prime(h_1,h_2),p_2^\prime(h_1,h_2)\}$, where
\begin{align*}
p_1^\prime(h_1,h_2)=&-81/32h_1+135/64h_1^2+729/64h_1^3-297/32h_1^4-81/16h_1^5+27/8h_1^6+81/32h_2\\&
-135/64h_2^2-729/64h_2^3+297/32h_2^4+81/16h_2^5-27/8h_2^6-729/64h_1h_2^2\\&
+297/16h_1h_2^3+243/16h_1h_2^4-27/2h_1h_2^5+729/64h_1^2h_2+81/8h_1^2h_2^3\\&-135/8h_1^2h_2^4
-297/16h_1^3h_2-81/8h_1^3h_2^2-243/16h_1^4h_2+135/8h_1^4h_2^2+27/2h_1^5h_2,\\
p_2^\prime(h_1,h_2)=&81/32h_1-135/64h_1^2-729/64h_1^3+297/32h_1^4+81/16h_1^5-27/8h_1^6-459/64h_1h_2\\&
-243/64h_1h_2^2+27h_1h_2^3-405/16h_1h_2^4+27/4h_1h_2^5-243/16h_1^2h_2\\&+2025/32h_1^2h_2^2
-567/8h_1^2h_2^3+189/8h_1^2h_2^4+729/16h_1^3h_2-243/4h_1^3h_2^2+27h_1^3h_2^3\\&-81/8h_1^4h_2
+27/4h_1^4h_2^2-27/4h_1^5h_2.
\end{align*}
\end{lem}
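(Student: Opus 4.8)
The plan is to mirror the computation of $P_0$ carried out at level $-\frac{1}{2}$; the only new features are the larger weight of the singular vector and the correspondingly heavier bookkeeping. First I would record that $F(\overline{v_2})$ is a highest weight vector for the adjoint action ${}_L$ on $U(\mathfrak{g})$: since $v_2$ is a singular vector we have $e_i(0).v_2=0$, and the identity $F(\overline{x(0)w})=[x,F(\overline{w})]$ gives $(e_i)_L F(\overline{v_2})=[e_i,F(\overline{v_2})]=0$. Reading the $\mathfrak{h}$-weight off the displayed expression for $F(\overline{v_2})$ (every monomial has $h_1$- and $h_2$-eigenvalue $3$; e.g. the leading term $e_{12}^3$ has weight $3\theta$), I conclude that $F(\overline{v_2})$ has weight $3\theta$, i.e. eigenvalues $(3,3)$ under $(h_1,h_2)$.

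Next I would bound $\dim R_0$. Since $R=F(\overline{U(\mathfrak{g})v_2})$ is generated from $F(\overline{v_2})$ by the lowering operators $(f_1)_L,(f_2)_L,(f_{12})_L$, of weights $-\alpha_1,-\alpha_2,-\theta$ respectively, the zero weight space $R_0$ is spanned by the images of the PBW monomials $f_1^{a}f_2^{b}f_{12}^{c}$ of total weight $-3\theta$. Because $f_1,f_2$ are odd, $f_1^2=f_2^2=0$ in $U(\mathfrak{g})$, so $a,b\in\{0,1\}$; solving $b+c=3$ and $a+c=3$ forces $a=b$ and $c=3-a$, leaving exactly the two monomials $f_{12}^3$ and $f_1f_2f_{12}^2$. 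Hence $\dim R_0\le 2$ and therefore $\dim P_0\le 2$.

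It then remains to exhibit two linearly independent elements of $P_0$. As at level $-\frac{1}{2}$, I would apply two weight-$(-3,-3)$ adjoint operators, say $(f_1)_L(f_2)_L(f_{12})_L^2$ and $(f_2)_L(f_1)_L(f_{12})_L^2$ (whose images span the same space as the two PBW monomials, since $(f_1)_L(f_2)_L+(f_2)_L(f_1)_L=-(f_{12})_L$), to $F(\overline{v_2})$, and then reduce the resulting weight-zero elements modulo $U(\mathfrak{g})\mathfrak{n}_+$ to extract their Cartan parts in $S(\mathfrak{h})=\mathbb{C}[h_1,h_2]$. After this reduction the two parts are the polynomials $p_1'(h_1,h_2)$ and $p_2'(h_1,h_2)$. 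Since $p_1'$ contains the monomial $-\frac{27}{8}h_2^6$ while $p_2'$ has no $h_2^6$ term, the two polynomials are linearly independent, and combined with $\dim P_0\le 2$ this yields $P_0=\mathrm{span}_{\mathbb{C}}\{p_1',p_2'\}$.

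The main obstacle is entirely computational. The element $F(\overline{v_2})$ is a degree-six element of $U(\mathfrak{g})$ with several dozen terms, and applying four successive lowering operators through the adjoint action and then normal-ordering to isolate the pure-Cartan component is far too large to carry out by hand; this is precisely the kind of calculation handled by computer algebra, as with the derivation of $v_2$ itself. The conceptual content — the highest-weight property, the weight $3\theta$, and the two-dimensional bound on $R_0$ — is what guarantees that the brute-force reduction collapses to a two-dimensional answer spanned by the two displayed polynomials.
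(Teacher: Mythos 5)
Your proposal is correct and takes essentially the same route as the paper: the paper states this lemma without written proof, but its method (parallel to the level $-\frac{1}{2}$ case, where $(f_1f_2)_L$ and $(f_2f_1)_L$ are applied to $F(\overline{v_1})$ and reduced modulo $U(\mathfrak{g})\mathfrak{n}_+$, with $\dim P_0\le\dim R_0\le 2$) is exactly your scheme of applying the weight-$(-3\theta)$ lowering operators $(f_1)_L(f_2)_L(f_{12})_L^2$ and $(f_2)_L(f_1)_L(f_{12})_L^2$ to the weight-$3\theta$ highest weight vector $F(\overline{v_2})$, bounding $R_0$ by the two PBW monomials $f_{12}^3$ and $f_1f_2f_{12}^2$, and checking linear independence of the resulting Cartan parts. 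Your deferral of the normal-ordering computation to computer algebra matches the paper, which obtained $v_2$ (and implicitly $p_1',p_2'$) via Mathematica, and your supporting details (the identity $(f_1)_L(f_2)_L+(f_2)_L(f_1)_L=-(f_{12})_L$, the oddness forcing $f_1^2=f_2^2=0$, and the $h_2^6$-coefficient independence check) are all sound.
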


Then the set $\{L(\lambda)|\lambda=0,-\frac{3}{2}\Lambda_1,-\frac{3}{2}\Lambda_2,\frac{3}{2}\Lambda_1+\frac{3}{2}\Lambda_2,\alpha\Lambda_1+(-\frac{1}{2}-\alpha)\Lambda_2,
\alpha\Lambda_1+(\frac{1}{2}-\alpha)\Lambda_2,\alpha\Lambda_1+(1-\alpha)\Lambda_2,\alpha\Lambda_1+(2-\alpha)\Lambda_2,\alpha\in\mathbb{C}\}$
provides the complete list of irreducible $A(L_{\widehat{\mathfrak{g}}}(\frac{1}{2},0))$-modules from the category $\mathcal{O}_{\mathfrak{g}}$.
Hence we obtain all irreducible weak $L_{\widehat{\mathfrak{g}}}(\frac{1}{2},0)$-modules in the category $\mathcal{O}_{\frac{1}{2}}$ and all irreducible ordinary $L_{\widehat{\mathfrak{g}}}(\frac{1}{2},0)$-modules.

\begin{thm}
The set $\{L(\frac{1}{2},\lambda)|\lambda=0,-\frac{3}{2}\Lambda_1,-\frac{3}{2}\Lambda_2,\frac{3}{2}\Lambda_1+\frac{3}{2}\Lambda_2,\alpha\Lambda_1+(-\frac{1}{2}-\alpha)\Lambda_2,
\alpha\Lambda_1+(\frac{1}{2}-\alpha)\Lambda_2,\alpha\Lambda_1+(1-\alpha)\Lambda_2,\alpha\Lambda_1+(2-\alpha)\Lambda_2,\alpha\in\mathbb{C}\}$
provides the complete list of irreducible modules in the category $\mathcal{O}_{\frac{1}{2}}$.
Moreover, the set $\{L(\frac{1}{2},\lambda)|\lambda=0,\frac{3}{2}\Lambda_1+\frac{3}{2}\Lambda_2,
\alpha\Lambda_1+(1-\alpha)\Lambda_2,\alpha\Lambda_1+(2-\alpha)\Lambda_2,\alpha\in\mathbb{C}\}$
provides the complete list of irreducible ordinary $L_{\widehat{\mathfrak{g}}}(\frac{1}{2},0)$-modules.
\end{thm}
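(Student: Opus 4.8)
The plan is to reduce the classification to the representation theory of the Zhu algebra $A(L_{\widehat{\mathfrak{g}}}(\frac{1}{2},0))$ computed above, and then to separate the ordinary modules by a finite-dimensionality criterion. For the first list I would invoke the one-to-one correspondence coming from Theorem \ref{thmzhu}: the irreducible objects of $\mathcal{O}_{\frac{1}{2}}$ are exactly the irreducible quotients $L(\frac{1}{2},\lambda)$ whose lowest conformal-weight space $L(\lambda)$ is an irreducible $A(L_{\widehat{\mathfrak{g}}}(\frac{1}{2},0))$-module lying in $\mathcal{O}_{\mathfrak{g}}$. By Proposition \ref{propavvv} such a $\lambda\in\mathfrak{h}^*$ is characterized by $p(\lambda)=0$ for all $p\in P_0$, so by Lemma \ref{lem11} one must solve the system $p_1^\prime(\lambda(h_1),\lambda(h_2))=p_2^\prime(\lambda(h_1),\lambda(h_2))=0$. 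I expect the zero set of this pair of degree-six polynomials to decompose as the four isolated points $(0,0)$, $(-\frac{3}{2},0)$, $(0,-\frac{3}{2})$, $(\frac{3}{2},\frac{3}{2})$ together with the four lines on which $\lambda(h_1)+\lambda(h_2)\in\{-\frac{1}{2},\frac{1}{2},1,2\}$; rewriting $p_1^\prime,p_2^\prime$ in the coordinates $h_+=h_1+h_2$, $h_-=h_1-h_2$ and factoring should exhibit precisely these components. Translating the points and lines back into the weights $\Lambda_1,\Lambda_2$ yields the eight families in the statement, and since each $L(\frac{1}{2},\lambda)$ is an irreducible highest weight $\widehat{\mathfrak{g}}$-module with finite-dimensional weight spaces and a unique maximal weight, it genuinely lies in $\mathcal{O}_{\frac{1}{2}}$.

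For the ordinary modules I would argue exactly as in Proposition \ref{thmord}. Given an irreducible ordinary module $M$, its lowest $L(0)$-eigenspace $\Omega(M)$ is an irreducible $A(L_{\widehat{\mathfrak{g}}}(\frac{1}{2},0))$-module in $\mathcal{O}_{\mathfrak{g}}$, hence of the form $L(\lambda)$ for one of the $\lambda$ found above, and $M\cong L(\frac{1}{2},\lambda)$. The key observation is that the degree-$n$ homogeneous piece of $L(\frac{1}{2},\lambda)$ is spanned by the finitely many PBW monomials in $\mathfrak{g}\otimes t^{-1}\mathbb{C}[t^{-1}]$ of degree $n$ applied to $L(\lambda)$, while the whole of $L(\lambda)$ sits in the lowest $L(0)$-eigenspace; consequently $L(\frac{1}{2},\lambda)$ is ordinary if and only if $L(\lambda)$ is finite-dimensional as a $\mathfrak{g}$-module. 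Since the even subalgebra $\mathfrak{g}_0$ contains the $sl_2$-triple $\{e_{12},f_{12},h_+\}$, finite-dimensionality of $L(\lambda)$ is equivalent to $\lambda(h_+)=\lambda(h_1)+\lambda(h_2)\in\mathbb{Z}_+$. Imposing this on the eight families leaves precisely $\lambda=0$ (with $h_+=0$), $\lambda=\frac{3}{2}\Lambda_1+\frac{3}{2}\Lambda_2$ (with $h_+=3$), the line $\alpha\Lambda_1+(1-\alpha)\Lambda_2$ (with $h_+=1$) and the line $\alpha\Lambda_1+(2-\alpha)\Lambda_2$ (with $h_+=2$), which is the asserted ordinary list; conversely each of these has finite-dimensional $L(\lambda)$, so the corresponding $L(\frac{1}{2},\lambda)$ is ordinary.

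The main obstacle will be the explicit solution of the polynomial system $p_1^\prime=p_2^\prime=0$. Unlike the boundary level $-\frac{1}{2}$, where $p_1,p_2$ are quadratic and the solutions are four isolated points, here the two defining polynomials have degree six and their common zero locus is positive-dimensional, so one must correctly identify the one-dimensional components (the four lines) in addition to the embedded points and check that no further solutions are missed. I anticipate that passing to the variables $h_+,h_-$, in which the line components become $h_+=\mathrm{const}$, will make both the factorization and the completeness of the list transparent, after which Proposition \ref{propavvv}, Theorem \ref{thmzhu} and the finite-dimensionality dichotomy above assemble into the two claimed classifications.
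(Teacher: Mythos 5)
Your overall route is exactly the paper's: the paper obtains the $\mathcal{O}_{\frac{1}{2}}$ list by combining Theorem \ref{thmzhu}, Proposition \ref{propavvv} and Lemma \ref{lem11} (solving $p_1^\prime=p_2^\prime=0$, whose common zero locus is indeed the four isolated points $(0,0)$, $(-\frac{3}{2},0)$, $(0,-\frac{3}{2})$, $(\frac{3}{2},\frac{3}{2})$ together with the four lines $\lambda(h_1)+\lambda(h_2)\in\{-\frac{1}{2},\frac{1}{2},1,2\}$), and it extracts the ordinary list by repeating the method of Proposition \ref{thmord}. So the architecture of your argument is sound, and the first half needs nothing beyond actually carrying out the factorization you sketch.

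The one genuine flaw is your finite-dimensionality criterion. From the $sl_2$-triple $\{e_{12},h_+,f_{12}\}$ you get only the \emph{necessity} of $\lambda(h_+)\in\mathbb{Z}_+$; the asserted equivalence is false for $sl(2|1)$ with the Borel used here, in which both simple roots are odd and isotropic. Even dominant integrality with respect to a non-distinguished Borel does not suffice for finite-dimensionality: for instance $\lambda=\Lambda_1-\Lambda_2$ satisfies $\lambda(h_+)=0\in\mathbb{Z}_+$, yet the odd reflection at $\alpha_2$ (note $(\lambda,\alpha_2)=\lambda(h_2)=-1\neq0$ and $\alpha_2(h_+)=1$) converts the highest weight to one taking the value $-1$ on $h_+$ for the distinguished Borel, so $L(\Lambda_1-\Lambda_2)$ is infinite-dimensional. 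The correct criterion, obtained by a single odd reflection at $\alpha_1$ or $\alpha_2$ (or by citing Theorem 14.1.1 of Musson and Lemma 1.4 of Kac--Wakimoto, as the paper does in Proposition \ref{thmord}), is: $L(\lambda)$ is finite-dimensional if and only if $\lambda(h_1)+\lambda(h_2)\in\mathbb{Z}_{\geq1}$ or $\lambda=0$. Your conclusion survives only because the unique common zero of $p_1^\prime,p_2^\prime$ with $\lambda(h_+)=0$ is $\lambda=0$ itself, so on the solution locus your simplified condition and the true one select the same weights: $0$ and $\frac{3}{2}\Lambda_1+\frac{3}{2}\Lambda_2$ are kept, the two full lines $h_+=1,2$ are kept (here the sufficiency you skipped must actually be checked for every $\alpha\in\mathbb{C}$, including the points $\alpha=1$, resp. $\alpha=2$, where $\lambda(h_2)=0$ and the reflection is trivial), and the families with $h_+=-\frac{3}{2},\pm\frac{1}{2}$ are excluded by the valid necessity direction. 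Patch that one step and your proof is complete and coincides with the paper's.
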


Similar to the Proposition \ref{propmodule}, we have that the $\widetilde{\mathfrak{g}}$-module $L(\frac{1}{2},U)$ is a weak $L_{\widehat{\mathfrak{g}}}(\frac{1}{2},0)$-module if and only if $p_1^\prime(h_1,h_2)U$ $=p_2^\prime(h_1,h_2)U=0$.
Therefore, similar to the Theorem \ref{thmav}, we have that
$A_{\omega_\xi}(L_{\widehat{\mathfrak{g}}}(\frac{1}{2},0))$
is isomorphic to the quotient algebra $\mathbb{C}[t_1,t_2]/\langle p_1^\prime(t_1,t_2),p_2^\prime(t_1,t_2)\rangle$.
From the Finiteness Theorem (cf.~\cite{CLO}), we have that $A_{\omega_\xi}(L_{\widehat{\mathfrak{g}}}(\frac{1}{2},0))$ is infinite-dimensional.
Hence from the Proposition \ref{prop22}, $(L_{\widehat{\mathfrak{g}}}(\frac{1}{2},0),\omega_\xi)$ is not rational.
Suppose $\mathcal{C}_{\frac{1}{2}}$ is semisimple,
similar to the Theorem \ref{thm22},
$(L_{\widehat{\mathfrak{g}}}(\frac{1}{2},0),\omega_\xi)$ is rational,
it is a contradiction,
hence $\mathcal{C}_{\frac{1}{2}}$ is not semisimple.
Thus we have the following theorem.

\begin{thm}
The $\mathbb{Q}$-graded vertex operator superalgebra $(L_{\widehat{\mathfrak{g}}}(\frac{1}{2},0),\omega_\xi)$ is not rational.
Moreover, the category $\mathcal{C}_{\frac{1}{2}}$ is not semisimple.
\end{thm}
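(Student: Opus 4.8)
The plan is to deduce non-rationality from the structure of the Zhu algebra via the contrapositive of Proposition \ref{prop22}: if $(L_{\widehat{\mathfrak{g}}}(\frac{1}{2},0),\omega_\xi)$ were rational, then $A_{\omega_\xi}(L_{\widehat{\mathfrak{g}}}(\frac{1}{2},0))$ would be a finite-dimensional semisimple associative superalgebra. I would therefore first identify this Zhu algebra explicitly and then show that it is infinite-dimensional, which immediately rules out rationality.

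To compute the Zhu algebra I would run the same argument that produced Theorem \ref{thmav} at level $-\frac{1}{2}$, now with the singular vector $v_2$ in place of $v_1$. First, mimicking the proof of Proposition \ref{propmodule}, the intertwining-operator computation (using $\mathcal{Y}$ together with the anti-automorphism $\sigma$, the projection $\pi$, and the grading restriction on the bilinear pairing) shows that the $\widetilde{\mathfrak{g}}$-module $L(\frac{1}{2},U)$ is a weak $L_{\widehat{\mathfrak{g}}}(\frac{1}{2},0)$-module precisely when $p_1^\prime(h_1,h_2)U=p_2^\prime(h_1,h_2)U=0$, where $p_1^\prime,p_2^\prime$ are the two generators of $P_0$ from Lemma \ref{lem11}. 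Combining this module criterion with Proposition \ref{propav}, which gives $A_{\omega_\xi}(V_{\widehat{\mathfrak{g}}}(\frac{1}{2},0))\cong\mathbb{C}[t_1,t_2]$, and with Lemma \ref{lemfz} exactly as in Theorem \ref{thmav}, I would conclude $A_{\omega_\xi}(L_{\widehat{\mathfrak{g}}}(\frac{1}{2},0))\cong\mathbb{C}[t_1,t_2]/\langle p_1^\prime(t_1,t_2),p_2^\prime(t_1,t_2)\rangle$.

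Next I would show this quotient is infinite-dimensional. By the Finiteness Theorem (cf.~\cite{CLO}), $\mathbb{C}[t_1,t_2]/\langle p_1^\prime,p_2^\prime\rangle$ is finite-dimensional if and only if the affine variety of common zeros of $p_1^\prime$ and $p_2^\prime$ is finite. Here the common zero locus is infinite: the list of irreducible modules in $\mathcal{O}_{\frac{1}{2}}$ already exhibits continuous one-parameter families of admissible weights, for instance those of the form $\alpha\Lambda_1+(1-\alpha)\Lambda_2$ with $\alpha\in\mathbb{C}$, whose $(h_1,h_2)$-eigenvalues give infinitely many common solutions of $p_1^\prime=p_2^\prime=0$. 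Hence $A_{\omega_\xi}(L_{\widehat{\mathfrak{g}}}(\frac{1}{2},0))$ is infinite-dimensional, and by Proposition \ref{prop22} the $\mathbb{Q}$-graded vertex operator superalgebra $(L_{\widehat{\mathfrak{g}}}(\frac{1}{2},0),\omega_\xi)$ is not rational.

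For the non-semisimplicity of $\mathcal{C}_{\frac{1}{2}}$, I would argue by contradiction. Suppose $\mathcal{C}_{\frac{1}{2}}$ were semisimple. The degree-lowering estimate in the proof of Theorem \ref{thm22} uses only the hypothesis $0<\xi<1$ together with the root structure of $\mathfrak{g}=sl(2|1)$, not the particular level, so it applies verbatim: every $\mathbb{Q}_+$-graded weak $(L_{\widehat{\mathfrak{g}}}(\frac{1}{2},0),\omega_\xi)$-module lies in $\mathcal{C}_{\frac{1}{2}}$, whence its semisimplicity would force such modules to be completely reducible, i.e.\ $(L_{\widehat{\mathfrak{g}}}(\frac{1}{2},0),\omega_\xi)$ would be rational, contradicting the first part. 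Therefore $\mathcal{C}_{\frac{1}{2}}$ is not semisimple. The main obstacle in this program is computational rather than conceptual: establishing the module criterion $p_1^\prime=p_2^\prime=0$ requires controlling the very large singular vector $v_2$ and its image $F(\overline{v_2})$, and confirming that the zero-weight reduction produces exactly the degree-six generators of Lemma \ref{lem11}; once those polynomials are in hand, the infinite-dimensionality of the Zhu algebra and the remaining deductions are routine.
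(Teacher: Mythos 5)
Your proposal matches the paper's own proof essentially step for step: the paper likewise transfers Proposition \ref{propmodule} and Theorem \ref{thmav} to level $\frac{1}{2}$ with the singular vector $v_2$ to obtain $A_{\omega_\xi}(L_{\widehat{\mathfrak{g}}}(\frac{1}{2},0))\cong\mathbb{C}[t_1,t_2]/\langle p_1^\prime,p_2^\prime\rangle$, invokes the Finiteness Theorem of \cite{CLO} to see this is infinite-dimensional, concludes non-rationality from Proposition \ref{prop22}, and derives non-semisimplicity of $\mathcal{C}_{\frac{1}{2}}$ by the same contradiction via the degree-lowering argument of Theorem \ref{thm22}. The only difference is cosmetic: you make explicit the witness for the infinite zero locus (the one-parameter families such as $\alpha\Lambda_1+(1-\alpha)\Lambda_2$), which the paper leaves implicit in its earlier classification of irreducibles in $\mathcal{O}_{\frac{1}{2}}$.
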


In the end, we present the following conjecture for the vertex operator superalgebra $L_{\widehat{\mathfrak{g}}}(\mathcal{k},0)$
at boundary admissible level $\mathcal{k}$.

\begin{con}
Let $\mathcal{k}=-\frac{m}{m+1}~(m\in\mathbb{N})$ be the boundary admissible level.
Then $L_{\widehat{\mathfrak{g}}}(\mathcal{k},0)$ is rational in the category $\mathcal{O}$
and the irreducible weak modules in the category $\mathcal{O}$ are exactly the admissible modules of level $\mathcal{k}$ for $\widehat{\mathfrak{g}}$.
Moreover,
the $\mathbb{Q}$-graded vertex operator superalgebra $(L_{\widehat{\mathfrak{g}}}(\mathcal{k},0),\omega_\xi)$ is rational and $C_2$-cofinite.
\end{con}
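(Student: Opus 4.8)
The plan is to prove the conjecture for the general boundary admissible level $\mathcal{k}=-\frac{m}{m+1}$ by running the same four–stage template that the $\mathcal{k}=-\frac{1}{2}$ case (the instance $m=1$) carried out in Sections \ref{sec:3} and \ref{sec:4}. In the paper's notation $\mathcal{k}+1=\frac{1}{m+1}$ corresponds to $M=m+1$, so by Proposition \ref{lemsv1} the maximal submodule of $V(\mathcal{k},\mathbb{C})$ is generated by a single singular vector $v$ of weight $r_{\alpha_0+m\delta}.\,\mathcal{k}\Lambda_0$, and $A(L_{\widehat{\mathfrak{g}}}(\mathcal{k},0))\cong U(\mathfrak{g})/\langle F(\overline{v})\rangle$. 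The four stages are: (i) produce $v$ and hence $F(\overline{v})$; (ii) classify the irreducible $A(L_{\widehat{\mathfrak{g}}}(\mathcal{k},0))$-modules in $\mathcal{O}_{\mathfrak{g}}$ and match them with the admissible weights; (iii) upgrade this to semisimplicity of $\mathcal{O}_{\mathcal{k}}$ and $\mathcal{C}_{\mathcal{k}}$ with finitely many irreducibles; (iv) deduce rationality of $(L_{\widehat{\mathfrak{g}}}(\mathcal{k},0),\omega_\xi)$ and $C_2$-cofiniteness.

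For stages (i)–(ii) I would obtain $v$ via the Malikov–Feigin–Fuchs-type construction for $\widehat{sl(2|1)}$ in \cite{BT,S}, then apply the isomorphism $F$ to get a highest weight vector $F(\overline{v})$ of the finite-dimensional $\mathfrak{g}$-module $R=F(\overline{U(\mathfrak{g})v})$. Applying suitable products of lowering operators $(f_{i_1}\cdots f_{i_r})_L$ to $F(\overline{v})$ produces elements of the zero-weight space $R_0$, whose action on Verma highest weight vectors $v_\lambda$ (reduced modulo $U(\mathfrak{g})\mathfrak{n}_+$) yields the polynomials spanning $P_0\subseteq S(\mathfrak{h})$, exactly as in Lemma \ref{lem11}. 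By Proposition \ref{propavvv}, $L(\lambda)$ is an $A(L_{\widehat{\mathfrak{g}}}(\mathcal{k},0))$-module if and only if $p(\lambda)=0$ for every $p\in P_0$, so the crux is to show that the common zero locus of $P_0$ is precisely the finite set of restrictions $\{\Lambda^{(1)}_{k_1,k_2}\big|_{\mathfrak{h}},\ \Lambda^{(2)}_{k_1,k_2}\big|_{\mathfrak{h}}\}$ of the level-$\mathcal{k}$ admissible weights.

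For stage (iii) I would generalize Proposition \ref{propmodule} to characterize, through the same intertwining-operator/bilinear-pairing computation, when $L(\mathcal{k},U)$ is an $L_{\widehat{\mathfrak{g}}}(\mathcal{k},0)$-module in terms of the annihilation $p_i(\mathfrak{h})U=0$ by the generators of $P_0$; then the Engel-theorem argument of Proposition \ref{prop11} shows every module in $\mathcal{C}_{\mathcal{k}}$ has a highest weight vector and that $\mathfrak{h}$ acts semisimply (using that the defining ideal in $\mathbb{C}[h_1,h_2]$ is radical, i.e. an intersection of distinct maximal ideals). Since the resulting irreducibles are exactly the admissible modules, the $\mathrm{Ext}$-vanishing inputs of \cite{GK} together with Lemma 4.1 of \cite{AMP} give $\mathrm{Ext}^1=0$ between and within irreducibles, whence $\mathcal{O}_{\mathcal{k}}$ and $\mathcal{C}_{\mathcal{k}}$ are semisimple as in Theorems \ref{thm1} and \ref{thmc12}. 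Stage (iv) is then formal: for $0<\xi<1$ any $\mathbb{Q}_+$-graded weak $(L_{\widehat{\mathfrak{g}}}(\mathcal{k},0),\omega_\xi)$-module lies in $\mathcal{C}_{\mathcal{k}}$ exactly as in Theorem \ref{thm22}, giving rationality, and $A_2(L_{\widehat{\mathfrak{g}}}(\mathcal{k},0))$ is computed from the leading $\mathfrak{h}$-part of $v$ to obtain a finite-dimensional quotient of $\mathbb{C}[t_1,t_2]$, giving $C_2$-cofiniteness.

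The hard part will be stages (i)–(ii) for arbitrary $m$. The $\mathcal{k}=\frac{1}{2}$ example already shows that $F(\overline{v})$ and the zero-weight polynomials are of degree $6$ and visually unwieldy, and their complexity grows rapidly with $m$, so a brute-force computation is hopeless in general. A genuine proof must replace explicit formulas with a structural handle on $P_0$: for instance, controlling its common zeros through the Kac–Kazhdan/Shapovalov determinant, or exploiting the realization of $L_{\widehat{sl(2|1)}}(\mathcal{k},0)$ as a simple-current extension of $V_{\widehat{gl(1|1)}}$ and the tensor-category data of \cite{CR,CMY}, so as to identify the annihilating variety with the admissible weights without writing the polynomials down. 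A secondary point to verify is that the $\mathrm{Ext}^1$-vanishing and Kazhdan–Lusztig semisimplicity invoked from \cite{GK,AMP} hold uniformly across all boundary admissible levels, not merely at $-\frac{1}{2}$; once these two issues are settled, the remaining stages are routine adaptations of the arguments already in the paper.
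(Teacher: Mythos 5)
There is a genuine gap, and in fact the proposal itself concedes it: what you have written is a programme, not a proof. Note first that the statement you were asked to prove is presented in the paper as a \emph{conjecture}; the paper proves it only in the boundary case $m=1$ (level $-\frac{1}{2}$, Sections \ref{sec:3} and \ref{sec:4}) and offers the non-boundary level $\frac{1}{2}$ as supporting counter-evidence. Your four-stage template is a faithful abstraction of that $m=1$ argument, and your bookkeeping is correct ($\mathcal{k}+1=\frac{1}{m+1}$ gives $M=m+1$, so Proposition \ref{lemsv1} yields a single singular vector of weight $r_{\alpha_0+m\delta}.\mathcal{k}\Lambda_0$). But stages (i)--(ii) --- producing $F(\overline{v})$, computing $P_0$, and showing that the common zero locus of $P_0$ is exactly the finite set of restrictions of the level-$\mathcal{k}$ admissible weights --- are not carried out, and they are the entire mathematical content of the conjecture. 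The level-$\frac{1}{2}$ computation in Section \ref{sec:5} shows concretely why this is not a routine verification: there the zero locus of $P_0$ contains whole curves ($\alpha\Lambda_1+(c-\alpha)\Lambda_2$ families), so finiteness of the zero locus at boundary levels is a theorem that must be proved, not an expected formality. Likewise, your parenthetical appeal to the defining ideal in $\mathbb{C}[h_1,h_2]$ being radical (needed to make the analogue of Proposition \ref{prop11} yield semisimple $\mathfrak{h}$-action, hence genuine highest weight vectors) is an unproved assumption for general $m$; at $m=1$ it was checked by exhibiting four pairwise coprime maximal ideals explicitly.

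Your suggested structural substitutes --- the Kac--Kazhdan/Shapovalov determinant, or the simple-current-extension realization over $V_{\widehat{gl(1|1)}}(1,0)$ from \cite{CR,CMY} --- are reasonable directions, but as stated they are pointers, not arguments: no mechanism is given by which either would identify the zero locus of $P_0$ with the admissible weights, and the $gl(1|1)$ realization is only cited in the paper at level $-\frac{1}{2}$. The one stage where your proposal is on firm ground is the $\mathrm{Ext}$-vanishing input: \cite{AMP} does treat boundary admissible levels uniformly, and the Lemma 2.3.3 of \cite{GK} argument applies to admissible weights generally, so stage (iii) would indeed go through \emph{once} stages (i)--(ii) are settled, and stage (iv) is, as you say, formal given (iii). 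In summary: correct scaffolding, correct identification of the obstruction, but the obstruction is unresolved, so this does not constitute a proof of the statement --- consistent with the fact that the paper leaves it as a conjecture.
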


\begin{rem}
{\em Let $\mathfrak{g}$ be any simple finite-dimensional Lie algebra or $osp(1|2n)$, we know that
$L_{\widehat{\mathfrak{g}}}(\mathcal{k},0)$ is rational in the category $\mathcal{O}$ at admissible level $\mathcal{k}$
and the irreducible weak modules in the category $\mathcal{O}$ are exactly the admissible modules of level $\mathcal{k}$ for $\widehat{\mathfrak{g}}$ \cite{A,GS}.
For the superalgebra case, we also believe that this conjecture holds for any basic simple Lie superalgebras except $osp(1|2n)$.
We also want to mention that we believe that the conclusion of this conjecture holds only if $\mathcal{k}$ is the boundary admissible level.}
\end{rem}

\section*{Acknowledgements}
We would like to thank Professor Maria Gorelik for sending us another proof of the Proposition \ref{lemsv1},
which is important in the computation of the Zhu algebra $A(L_{\widehat{\mathfrak{g}}}(\mathcal{k},0))$.
Q. Wang is partially supported by China NSF grants (No. 12571033).

\end{document}